\def \ben{\begin{eqnarray}}
\def \be{\begin{eqnarray*}}
\def \be{\begin{eqnarray*}}
\def \een{\end{eqnarray}}
\def \ee{\end{eqnarray*}}
\def \beq{\begin{equation}}
\def \eq{\end{equation}}
\def \bpar#1{\left\{\begin{array}{#1} }
\def \epar { \end{array}\right.}
\def\l{\left}
\def\r{\right}
\def \1{\textbf{1}}
\def \sur#1#2{\mathrel{\mathop{\kern 0pt#1}\limits^{#2}}}
\def \eqd{\sur{=}{(d)}}
\def \bls{{\tiny $\blacksquare$}}
\def \bma{\begin{bmatrix}}
\def \ema{\end{bmatrix}}
\def \E{\mathsf{E}}
\def \P{\mathsf{P}}
\def \Cov{\mathsf{Cov}}
\def \Var{\mathsf{Var}}
\def \BNEG{{\sf BNEG}}
\def \bpar#1{\left\{\begin{array}{#1} }
\def \epar { \end{array}\right.}
\def \R{\mathbb{R}}
\def \eref#1{(\ref{#1})}
\def \floor#1{\lfloor#1\rfloor}
\def \se{{\sf e}}
\def \cro#1{\llbracket#1\rrbracket}
\def \bT{\mathbf{T}}
\def\U{\mathbb{U}}
\newtheorem{lemma}{Lemma}
\newtheorem{theorem}[lemma]{Theorem}
\newtheorem{proposition}[lemma]{Proposition}
\newtheorem{conjecture}[lemma]{Conjecture}
\newtheorem{definition}[lemma]{Definition}
\newtheorem{definition-lemma}[lemma]{Definition-Lemma}
\newtheorem{remark}[lemma]{Remark}
\def \Z{\mathbb{Z}}
\def \Bkn{{\cal B}_{k}^n}
\def \tkn#1{t_{#1}^n}
\def \sigmakn#1{\sigma_{#1}^n{}}
\def \xikn#1{\xi_{#1}^{n,\star}}
\title{Note on the density of ISE and a related diffusion}
\author[$\dagger$]{Guillaume Chapuy}
\author[$\ddag$]{Jean-Fran\c{c}ois Marckert}
\affil[$\dagger$]{\small Universit\'e Paris Cit\'e, CNRS, IRIF, F-75013, Paris, France}
\affil[$\ddag$]{\small 
Univ. Bordeaux, CNRS, Bordeaux INP, LaBRI, UMR 5800, F-33400 Talence, France}
\date{}
\begin{document}
\normalem
\maketitle

\begin{abstract}
    The integrated super-Brownian excursion (ISE) is the occupation measure of the spatial component of the head of the Brownian snake with lifetime process the normalized Brownian excursion. It is a random probability measure on $\R$, and it is known to describe the continuum limit of the distribution of labels in various models of random discrete labelled trees.
	
We show that $f_{ISE}$, its (random) density  has a.s. a derivative $f'_{ISE}$ which is continuous and $\left(\frac{1}{2}-`e\right)$-H\"older for any $`e >0$ but for no $`e<0$ (proving a conjecture of Bousquet-Mélou and Janson).  
	We conjecture that  $f_{ISE}$ can be represented as a second-order diffusion of the form 
	$$df'_{ISE}(t) = \sqrt{2f_{ISE}(t)}\, dB_t + g\l(f'_{ISE}(t), f_{ISE}(t),\int_{-\infty}^t f_{ISE}(s)ds\r)dt,$$ for some continuous function $g$,  for $t>0$, and we give a number of remarks and questions in that direction.

	The proof of regularity is based on a moment estimate coming from a discrete model of trees, while the heuristic of the diffusion comes from an analogous statement in the discrete setting, which is a reformulation of explicit product formulas of Bousquet-M\'elou and the first author (2012).
\end{abstract}

\begin{figure}
  \centerline{\includegraphics[width=55mm]{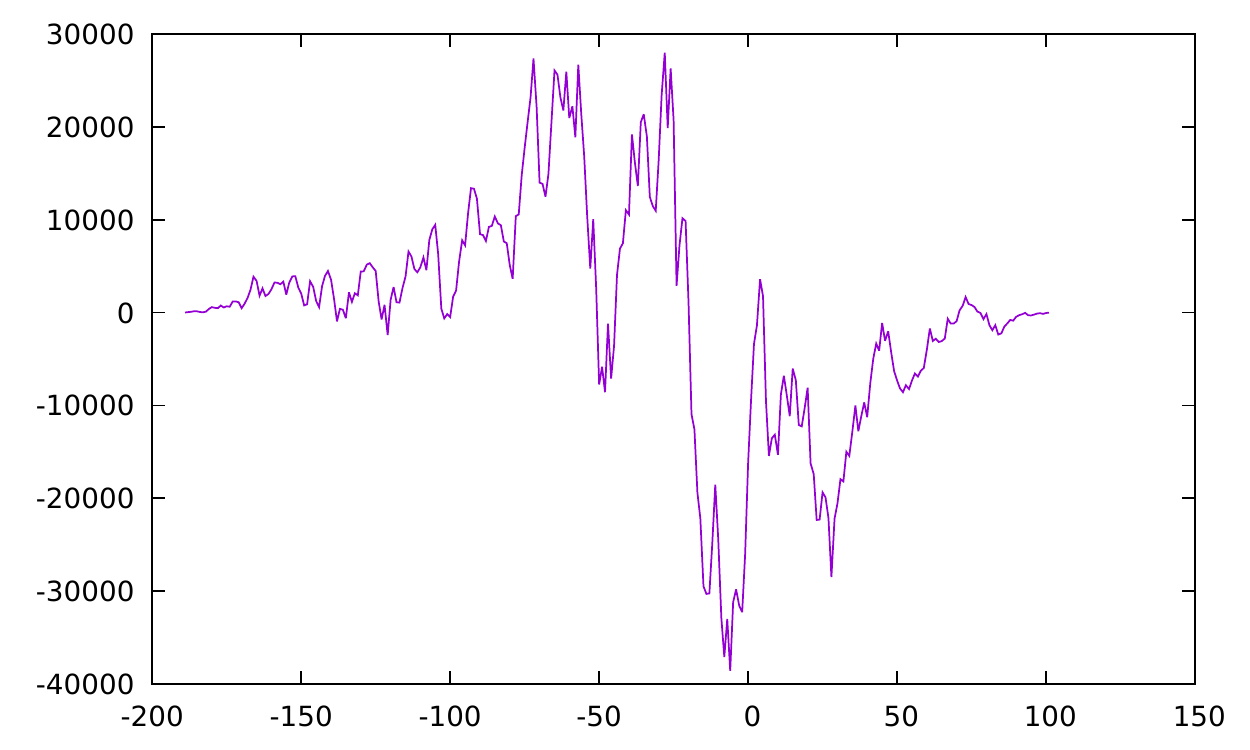}~~\includegraphics[width=55mm]{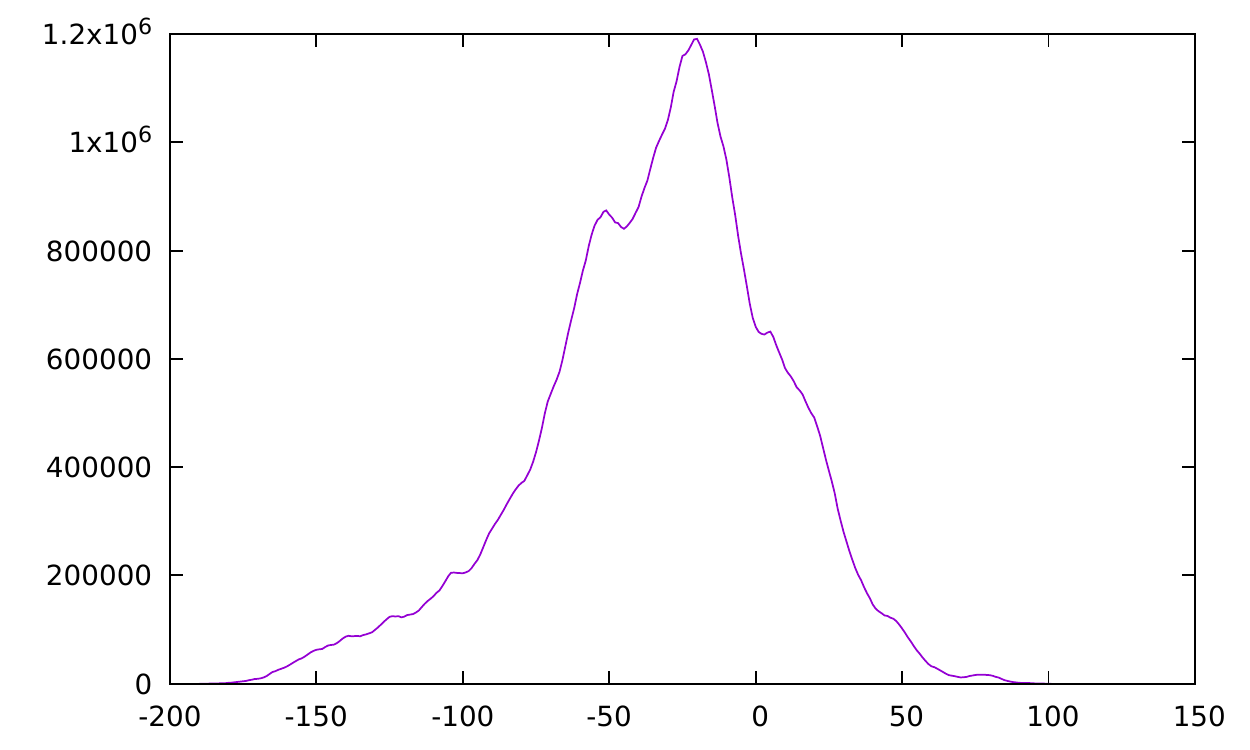} ~~\includegraphics[width=55mm]{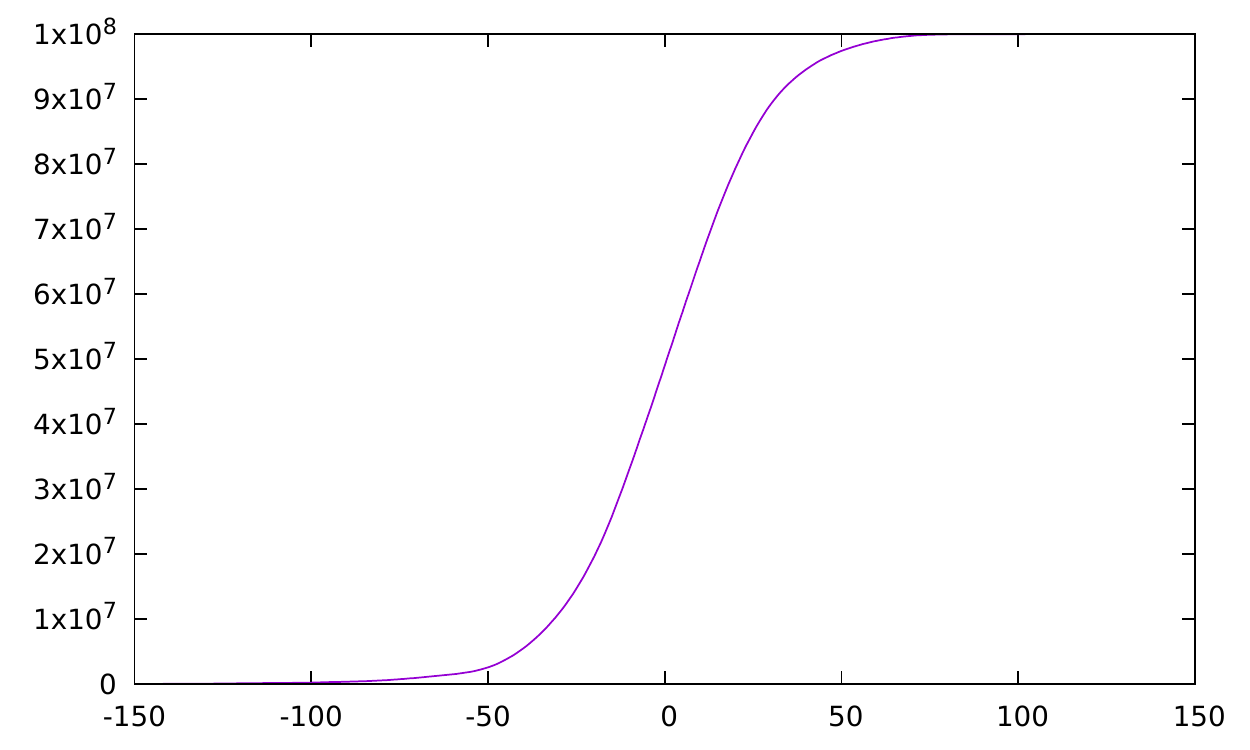}}
   \caption{\label{fig:illus} Uniform planar tree with 50 millions of vertices, spatial increments i.i.d. $\pm 1$ with probability $1/2$. From left to right, representation of the processs $\Delta(T)$, $M(T)$, $S(T)$. They are discrete approximations of the continuum processes $f'_{ISE}(t),f_{ISE}(t), \int_{-\infty}^t f_{ISE}(s)ds$.}
  \end{figure}
  
\section{Introduction}

\subsection{The ISE}

In this note we are interested in a random probability measure called the Integrated SuperBrownian Excursion, or ISE. 
We give here its most intrinsic definition via the Brownian snake. Our combinatorially inclined readers may prefer to think about it in terms of continuum limits of random trees (for this, see Proposition~\ref{prop:BMJ} below, which can be taken as a definition).

\smallskip

Recall that a Brownian snake $(W,c)=((W_s,c_s),s\in[0,1])$ with lifetime process $c=2\se$, where $\se$ is the normalized Brownian excursion, is a family of Brownian motions indexed by a continuum random tree with contour process $c$, namely:
\begin{itemize}[itemsep=0pt, topsep=0pt,parsep=0pt, leftmargin=24pt]
\item[--] For each $s\in[0,1]$, the spatial component $W_s$ is a Brownian motion indexed by $[0,c_s]$,
\item[--] for $0\leq s\leq t\leq 1$, $W_s$ and $W_t$ coincide on $[0,T_{s,t}]$ for $T_{s,t}=\min\{c_u, s\leq u \leq t\}$,
\item[--]the two Brownian motions $(W_s(T_{s,t}+u)-W_s(T_{s,t}), 0\leq u \leq c_s-T)$ and $(W_t(T_{s,t}+u)-W_t(T_{s,t}), 0\leq u\leq c_t-T_{s,t})$ are independent.
\end{itemize}
\noindent 
The lifetime process $2\se$ encodes a random tree $T_{2\se}$, usually called Aldous' continuum random tree, or CRT. The Brownian snake can be viewed as a Brownian motion indexed by $T_{2\se}$. Each real number $s$ in $[0,1]$ encodes a node in the tree, at depth $2\se(s)$, and the Brownian motion $W_s$ is the spatial component function: for $t\in[0,2\se(s)]$, $W_s(t)$ is simply the spatial position of the node of the branch from the root to the node $s$, which is at depth $t$.

The head of the (spatial component of the) Brownian snake is the process  $H=(H_s,s\in[0,1])$ of terminal points of the Brownian snake, and it is defined on $[0,1]$ by $H_s:=W_s(c_s)$. Conditional on $c$, it is a centred Gaussian process with covariance function  $\Cov(H_s,H_t)= \min\{c_u, u \in[s \wedge t, s\vee t]\}$ (for $0\leq s,t \leq 1$). 

The \emph{Integrated SuperBrownian Excursion}, or \emph{ISE}, is the random probability measure $\mu_{ISE}$ defined as the occupation measure of $H$: for all continuous function $g:\R\to\R$ with compact support
\[ \int_{-\infty}^{+\infty} g\, d\mu_{ISE} = \int_0^1 g(H_s) \,ds.\]
We refer to Le Gall \cite{LG2005}, or \cite{J-M,M-M} for more information on the subject.

It is known that $\mu_{ISE}$ has almost surely (a.s.) a (random) compact support~: this is a consequence of the a.s. continuity of $H$ (see~\cite{BM} and \cite{DEL} for the distribution of the support). Moreover, as shown by Bousquet-Mélou and Janson \cite{BM-J}, $\mu_{ISE}$ is a.s. absolutely continuous with respect to the Lebesgue measure and its random density has a continuous version denoted $f_{ISE}$: the study of the random process $f_{ISE}$ is the aim of this paper.

 \subsection{Main contributions of this note.}

The main point of this note is to convey the intuition that not only $\mu_{ISE}$ has a continuous density $f_{ISE}$, but this density has a continuous derivative $f'_{ISE}$, and we expect $f'_{ISE}$ to behave, in some sense, as a diffusion. In particular, we expect the 3D-process made by $f_{ISE}$, its derivative $f'_{ISE}$, and its cumulative integral  $t\mapsto\int_{-\infty}^t f_{ISE}(s)ds$, to be Markovian -- which as far as we know has never been suggested before.\medskip

\noindent Technically, our main contributions are the following: 
 \begin{itemize}[itemsep=0pt, topsep=0pt,parsep=0pt, leftmargin=24pt]
\item[\bls]  We prove (Theorem~\ref{thm:tightness} in Section \ref{sec:tight}) that indeed $f'_{ISE}$ exists and is continuous and even $\left(\frac{1}{2}-`e\right)$-H\"older for any $`e >0$. However, this is not true for any $`e<0$ (Proposition~\ref{prop:bonus} in Section~\ref{sec:gaussianPairs}), and in particular $f_{ISE}$ has no second derivative. This proves the main conjecture in Bousquet-Mélou and Janson~\cite[Conjecture 2.3]{BM-J}.  
\item[\bls] We introduce a 3D-valued process, noted $\zeta$, made by $f_{ISE}$, its derivative , and its integral.  The main message of this note is that this 3D-process opens the way to an understanding of $ISE$ in a Markovian way.
\item[\bls] We observe (Proposition~\ref{prop:discreteDiff} in Section~\ref{sec:comp}) that, in the discrete model of uniform random binary trees, whose convergence to ISE is known, the discrete analogue of the triple process $\zeta$ can be represented as a Markov process, when conditioned to its values at the \emph{two} boundaries of a discrete interval. This is a direct consequence,  which seems to have been unnoticed before, of a theorem of~\cite{BM-C}.
\item[\bls] We prove that this discrete Markov process indeed converges to a diffusion (Proposition~\ref{thm:diffApprox} in Section \ref{sec:diffApprox}), when conditioned to its value at only \emph{one} boundary of a given interval, and properly tamed near $0$.
\item[\bls] We conjecture (Section~\ref{seq:sq}) that $f'_{ISE}$ (more precisely, $\zeta$) can be represented by a diffusion, with or without conditioning at boundaries. We give a number of questions related to this.
\end{itemize}
We hope to stimulate efforts by researchers best acquainted with the subject, with the hope that someone can give a rigorous meaning (or several) to this statement, and prove it.
 
\medskip

\paragraph{Acknowledgement.} The authors have kept the idea of this project in mind for several years, without making much progress on the technical aspects. The first author talked to Jean-François Le Gall in CIRM in December 2021, who seemed quickly able to make contributions to the subject. A paper by Le Gall related to some of the questions we ask here is in preparation, and this is in fact the main motivation for us to finalize this note in its present form. We thank Jean-François Le Gall for his interest on the problem and for his encouragement to write this note, and we hope that other experts of the subject will be interested in these questions.

\section{Convergence results and related questions}

\subsection{Discrete approach to $f_{ISE}$.} From the discrete perspective, the ISE is the limit in distribution of several natural random probability measures, in particular the distribution profile of distances to a random vertex in a random planar map~\cite{CS}, and most directly, of the distribution of labels in various models of random spatial trees~\cite{Aldous_m,M-M,J-M,M12, D-J}.
The model of "spatial trees" involved are discrete branching random walks (sometimes viewed as discrete snakes): some random values are attached to the edges of a discrete random tree, and the label/abscissa attached with a node $u$, is obtained by summing the values on the path going from the root of the tree to $u$. These are discrete analogues to the Brownian snake (indexed by the CRT).
In order that the abscissa process along long branches converge a Brownian motion, the random values are taken to be centred, with finite variance (in fact, fourth moments are needed in the case of trees converging to the continuum random tree, after distance normalization by $\sqrt{n}$); the random values are also assumed to be independent, but dependence between values associated to "children-edge" of the same node, as well as a relaxation of the mean 0 condition replaced by a global centring condition (under random permutations of the sibling of vertices) is allowed (see \cite{M12}), and this encompasses the discrete models we will need here.

Hence, to study ISE, a possible way consists in choosing a simple discrete model having the Brownian snake as a limit (or simply, having a spatial occupation measure converging to ISE).

\begin{figure}[t]
	\begin{center}
		\includegraphics[width=0.6\linewidth]{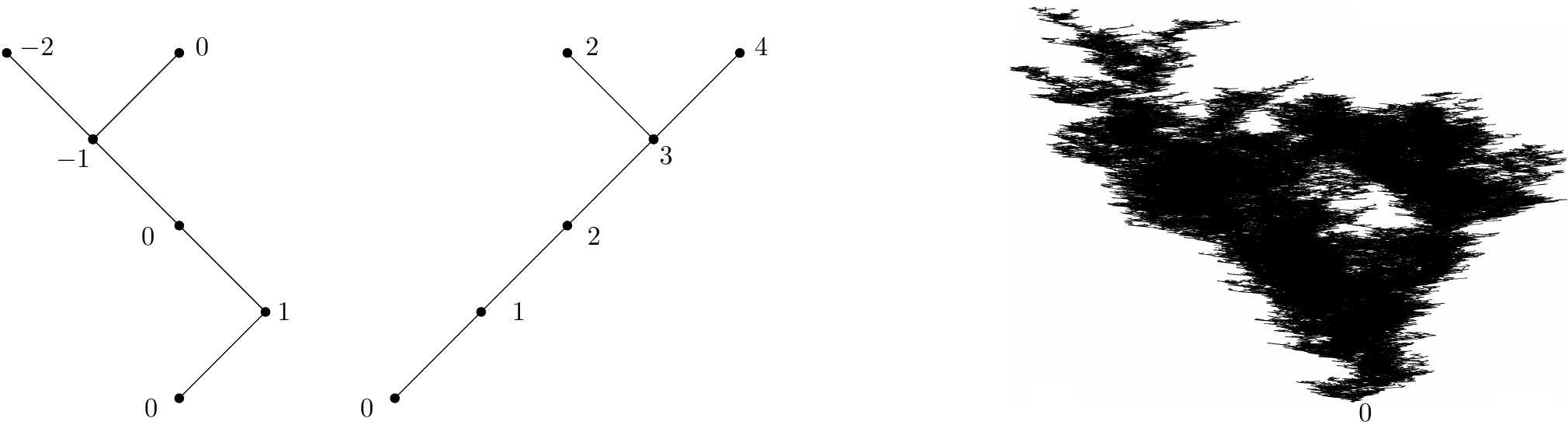}
	\end{center}
	\caption{Left: Two different binary trees with $n=6$ vertices. The abscissa of each vertex is displayed next to it. These two trees have respective vertical profile $(1,1;3,1)$ and $(;1,1,2,1,1)$. Right: A uniformly random binary tree $T$ of size $n$ with $n\approx 10^5$; the projection of the uniform measure of points on the horizontal axis is the measure $\mu_T(n)$ -- and, properly normalized, it is a good approximation of $\mu_{ISE}$.}
	\label{fig:binarytree}
\end{figure}

For the purpose of this note, we will use the fact that the ISE is the weak limit of a discrete model for which some useful formula have been obtained (\cite{BM-C}).

\paragraph{Binary trees}
For us a binary tree is a rooted plane tree in which each vertex has a (possibly empty) \emph{left} subtree and a (possibly empty) \emph{right} subtree. See Figure~\ref{fig:binarytree}. 
\begin{remark}
    Some authors would say an \emph{incomplete} binary tree. Replacing each empty subtree by a single leaf gives a \emph{complete} binary tree (plane tree with only vertices of arity $2$ and $0$). 
    \end{remark}

The number of binary trees with $n$ vertices is the $n$-th Catalan number, $Cat(n)=\frac{1}{n+1}{2n \choose n}$. We define the \emph{abscissa}~\cite{BM, BM-C} of a vertex $v$ as the number of right steps minus the number of left steps on the path from the root to $v$. 
The uniform distribution on the set ${\cal B}_n$ of binary trees with $n$ vertices is denoted $\U_n$. A binary tree equipped with the labelling of its vertices by their abscissas, gives a model of spatial tree whose distribution of labels, as we will see, converges after an appropriate rescaling to ISE.

The \emph{vertical profile} of the tree $T$ is the tuple
\[M(T)=\l(M_{\ell(T)}(T),\cdots,M_{-1}(T); M_0(T),\cdots,M_{r(T)}(T)\r)\]
where $\ell=\ell(T), r=r(T)$ are respectively the smallest  and largest abscissa ($\ell$ and $r$ stand for left and right), and where for each integer $i$, $M_i(T)$ is the number of vertices of $T$ of abscissa $i$.
For probabilistic applications, it is most natural to turn this object into a probability measure. Introducing a suitable normalization, we thus define the probability measure
\begin{align}\label{def:profile}
	\mu_T(n) := \frac{1}{n} \sum_{k=\ell(T)}^{r(T)} M_k(T)\, {\sf Dirac}_{  k / (2n)^{1/4} },
\end{align}
where ${\sf Dirac}_x$ is the Dirac mass at $x$.
Note that this carries all the information of the profile, and in fact, it is the measure $\mu_T(n)$ that we will refer to as \emph{the profile of $T$}. In addition to $M(T)$, we introduce two processes 
$\Delta(T):=(\Delta_i(T), i \in \mathbb{Z})$ and
$S(T):=(S_i(T), i \in \mathbb{Z})$,  
which can be thought of as the discrete derivative and discrete integral of $M(T)$, respectively:
\[\Delta_i(T)=M_i(T)-M_{i-1}(T),~~~ S_i(T)=\sum_{j \leq i} M_j(T), ~~~~~ i\in \mathbb{Z}.\]
Finally,  we define the triple
\[ Z_i(T)=(\Delta_i(T),M_i(T),S_i(T)), ~~~~~ i\in \mathbb{Z}.\]
Note that each of $\Delta(T)$, $M(T)$ and $S(T)$ determines $Z(T):=(\Delta(T),M(T),S(T))$, and observe also that the knowledge of $(\Delta_i, M_i)$ is equivalent to that of  $(M_{i-1},M_i)$.  \par
The following Proposition, due to Bousquet-Mélou \& Janson \cite{BM-J}, states that $\mu_{ISE}$ is nothing but the limit in distribution of the profile of a random element of $\U_n$.
\begin{proposition}[{\cite[Theo. 3.1]{BM-J}}]\label{prop:BMJ}
	If $\bT_n$ is picked according to $\U_n$, then $\mu_{\bT_n}(n)$ converges in distribution to $\mu_{ISE}$, for the topology of  weak convergence of probability measures. Moreover, $(\frac{1}n(2n)^{1/4}M_{(2n)^{1/4}x}(\bT_n),x\in \R)$ converges in distribution to $f_{ISE}$ in $C_0$, the space of continuous functions with 0 limit at $\pm \infty$, equipped with the topology of uniform convergence.
\end{proposition}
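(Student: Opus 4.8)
The plan is to establish the two assertions simultaneously, combining the classical convergence of the \emph{discrete snake} attached to a uniform binary tree towards the Brownian snake — which yields the weak convergence of the profiles as measures and the convergence of the finite-dimensional marginals of the profile \emph{function} — with a quantitative bound on the oscillations of that function, which upgrades everything to uniform convergence in $C_0$ and, in passing, establishes that $\mu_{ISE}$ has a continuous density. Via the standard bijection between binary trees and conditioned Galton--Watson trees (complete, offspring law $\tfrac12\delta_0+\tfrac12\delta_2$, critical, variance $1$), a uniform $\bT_n\in{\cal B}_n$ is such a tree conditioned on its size, and the abscissa of a vertex $v$ is the sum along the root-to-$v$ path of the displacements $-1$ (left edge) and $+1$ (right edge). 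At each node the two child displacements are $-1,+1$, a bounded family which is globally centred under a uniform reordering of the siblings, so the model fits the framework of discrete snakes recalled above (in particular \cite{M12}); hence the rescaled pair (contour of $\bT_n$, abscissa read along the contour) converges in distribution to $(2\se, H)$, and the space scaling this produces for abscissas is exactly the $(2n)^{1/4}$ of \eqref{def:profile}.

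For $g\in C_c(\R)$ one has $\int g\,d\mu_{\bT_n}(n)=\tfrac1n\sum_{v\in\bT_n}g\bigl(\mathrm{abs}(v)/(2n)^{1/4}\bigr)$. Up to the standard, asymptotically negligible discrepancy between uniform-vertex sampling and contour exploration, this is a continuous functional of the rescaled head process, so by the previous paragraph it converges in distribution to $\int_0^1 g(H_s)\,ds=\int g\,d\mu_{ISE}$, which is the first assertion $\mu_{\bT_n}(n)\Rightarrow\mu_{ISE}$. Choosing for $g$ approximate indicators of small intervals around points $x_1,\dots,x_j$ then yields convergence of the finite-dimensional marginals of $F^n(x):=\tfrac1n(2n)^{1/4}M_{\lfloor(2n)^{1/4}x\rfloor}(\bT_n)$, the limit being forced to be a version of the density of $\mu_{ISE}$ at those points.

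It remains to prove that $(F^n)$, after linear interpolation, is tight in $C_0$. By a Kolmogorov-type criterion this reduces to an increment estimate $\E\bigl[|F^n(x)-F^n(y)|^{p}\bigr]\le C\,|x-y|^{\alpha}$ with some $\alpha>1$, uniform in $n$, together with a uniform tail bound confining the support (equivalently $r(\bT_n)-\ell(\bT_n)=O_P(n^{1/4})$ with enough uniform integrability, which follows from the snake convergence and a large-deviations estimate for the maximal abscissa). The increment bound is a statement about the moments of the discrete derivative $\Delta_k(\bT_n)=M_k(\bT_n)-M_{k-1}(\bT_n)$ — a single abscissa step separates levels $k-1$ and $k$ — and can be extracted either from Brownian-snake estimates or, in this very explicit model, from the product formulas of \cite{BM-C}. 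Granting tightness, any subsequential limit $F$ of $(F^n)$ is a continuous element of $C_0$ with $\int g\,F=\int g\,d\mu_{ISE}$ for all $g\in C_c(\R)$ (passing the limit inside via uniform integrability), hence a continuous density of $\mu_{ISE}$; in particular $\mu_{ISE}$ is a.s. absolutely continuous and $f_{ISE}$ is well defined as this density, which is unique, so all subsequential limits agree and $F^n\to f_{ISE}$ in $C_0$.

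The whole argument hinges on the increment estimate of the previous paragraph: it is precisely the assertion that adjacent levels of the discrete profile have comparable sizes, and obtaining it with good enough exponents — strong enough, with hindsight, to be consistent with the sharp $\left(\tfrac12-`e\right)$-H\"older regularity of $f'_{ISE}$ of Theorem~\ref{thm:tightness} — is the main obstacle. The remaining ingredients (convergence of the discrete snake, the mild continuity issues of the occupation-measure functional, uniqueness of continuous densities) are classical or routine.
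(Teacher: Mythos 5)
The paper does not prove this proposition: it is cited from Bousquet-Mélou and Janson \cite[Thm.~3.1]{BM-J}, and the only supporting remark the paper offers is that the first assertion also follows from the discrete-snake framework of \cite{M12} via the offspring law $p_0=1/4,p_1=1/2,p_2=1/4$ on \emph{incomplete} binary trees (the model the paper actually works with), together with the displacement rule described there. Your sketch follows essentially this same probabilistic route (snake convergence for the marginals, a moment bound for tightness), so the general shape is right, but there are a few points worth flagging.

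First, your choice of Galton--Watson model differs from the paper's. You pass to \emph{complete} binary trees with offspring $\tfrac12\delta_0+\tfrac12\delta_2$ (variance $1$); the paper stays with incomplete binary trees, offspring variance $\sigma^2=1/2$. Under the add-a-leaf bijection the vertices of the incomplete tree correspond only to the \emph{internal} vertices of the complete one, so the profile you need counts internal vertices; moreover the spatial scale for a size-$n$ conditioned GW tree scales like $(\text{const depending on }\sigma)\,n^{1/4}$, so after this transfer the constant is not the $(2n)^{1/4}$ appearing in the statement without an extra bookkeeping argument. It is cleaner to work directly with the $(1/4,1/2,1/4)$ model the paper gives, for which the normalization is designed.

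Second, the passage from "occupation measure of the contour-indexed snake'' to "uniform-vertex label measure'' (which you call a "standard, asymptotically negligible discrepancy'') is a genuine step: the contour exploration visits each vertex a number of times equal to its degree, so its occupation measure is a degree-biased version of the uniform-vertex measure, and one must argue that this bias vanishes in the limit (or switch to a once-per-vertex parameterization). This is classical but not free.

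Third, and most importantly, the increment/moment estimate that drives your tightness argument is exactly the heavy lifting of any proof of this statement. In \cite{BM-J} it is obtained by a rather careful local-limit/moment analysis, and in the present paper the analogous (and sharper) estimate is the content of Lemma~\ref{lemma:momentBound}, proved by a substantial combinatorial and singularity analysis. Pointing to "Brownian-snake estimates'' or "the product formulas of \cite{BM-C}'' without carrying this out leaves the key step unproved. So the proposal is a reasonable roadmap aligned with the paper's stated alternative via \cite{M12}, but it is not yet a proof: the model normalization needs to be fixed and the tightness estimate is the part that actually requires work.
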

In the proposition, $M(\bT_n)$ is the continuous process which interpolates $(M_k(\bT_n),k \in \mathbb{Z})$ linearly between integer points. The same notation will be used for all integer-indexed processes encountered in the paper.\par
This result implies that $f_{ISE}$ is almost surely continuous (which can be stated also as: $\mu_{ISE}$ possesses a.s. a density $g_{ISE}$ with respect to the Lebesgue measure on $\R$, and the process $g_{ISE}$ admits a continuous version $f_{ISE}$). These authors also show that $f_{ISE}$ has a derivative a.e. $f'_{ISE}$, and they conjecture that $f'_{ISE}$ possesses a continuous version, but no second derivative (see Fig. \ref{fig:illus} for a simulation of the processes $\Delta,M,S$ of a large random tree, on which one can guess, among other things, the regularity of the limiting processes).

The first assertion of the proposition can also be seen as a consequence of~\cite{M12}, using the fact that the model of (spatial) trees under $\U_n$ can be realized as globally centred discrete snakes (take the following offspring distribution $p_0=1/4$, $p_1=1/2$, $p_2=1/4$. This offspring distribution is critical, and the corresponding variance $\sigma^2=1/2$. In the case where a node $u$ has one child, $v$, the spatial displacement associated with the edge $(u,v)$ is $+1$ or $-1$ with probability $1/2$, if there are two children, the spatial displacement is $(-1)$ for the left child, $+1$ for the right one).

For the interested reader, we point out that Devroye \& Janson \cite{D-J} obtained analogues of Proposition~\ref{prop:BMJ} for general models of discrete snakes
under the hypothesis that the spatial increments along edges are independent (which is not the case here). 

\subsection{Tightness  and regularity}
\label{sec:tight}
Let $\bT_n$ be a random binary tree taken under distribution $\U_n$. For $k\in \mathbb{Z}$, we let $M_k^n=M_k(\bT_n)$, $\ell_n=\ell(\bT_n), r_n=r(\bT_n)$, $\Delta^n=\Delta(\bT_n)$ and $S^n=S(\bT_n)$ be the discrete derivative and integral of this process.
We introduce the three-dimensional process $\zeta^{n}=(\zeta^{n}(t), t \in \mathbb{R})$ defined  by,
$$\zeta^{n}(t) :=
 \left(
	 \frac{1}{n^{1/2}}\Delta_{n^{1/4}t}^{n} , 
	 \frac{1}{n^{3/4}} M_{n^{1/4}t}^{n}, 
	 \frac{1}{n}       S_{n^{1/4}t}^{n}
 \right)_{t\in \mathbb{R}},
$$
where we recall that our notation uses implicitely linear interpolation between integer values of~$n^{1/4}t$.
The choice of normalization for the three coordinates will become clear below.
\begin{theorem}\label{thm:tightness}
	The process $\zeta^{n}$ is tight in $\mathcal{C}_K(\mathbb{R})^2\times C_{0,1}$, where $C_K(\R)$ is the space of continuous functions with compact real support, and $C_{0,1}$ the space of continuous function with limit 0 in $-\infty$ and $1$ in $+\infty$. Moreover, we have the convergence 
\begin{align}
		\zeta^{n} \longrightarrow \zeta, ~~\textrm{ where }~~
	\zeta(t):=\left(f'_{ISE}(t),f_{ISE}(t),\int_{-\infty}^t f_{ISE}(s)ds\right),
\end{align}
	where $f'_{ISE}$ is a continuous function, which is the derivative of $f_{ISE}$, (the convergence holds in distribution, for the topology of uniform convergence on compact sets). 
\end{theorem}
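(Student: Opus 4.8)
\emph{Plan of proof.} The plan is to establish tightness one coordinate at a time: since tightness of an $\mathbb{R}^3$‑path‑valued sequence is implied by tightness of each of its three coordinates in the respective Polish space, and since any subsequential limit is then pinned down by its three marginal laws together with the deterministic relations between coordinates that survive in the limit, it suffices to handle each coordinate separately. The middle coordinate $\zeta^n_2:=\frac{1}{n^{3/4}}M_{n^{1/4}\,\cdot\,}^{n}$ is the easiest: up to the innocuous deterministic rescaling relating the normalisation $(2n)^{1/4}$ of Proposition~\ref{prop:BMJ} to the normalisation $n^{1/4}$ used in $\zeta^n$, it is exactly the process Proposition~\ref{prop:BMJ} shows converges in $C_0$ to $f_{ISE}$, hence it is tight in $C_0$. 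To upgrade to $C_K(\mathbb{R})$ one needs in addition that the rescaled support is tight, i.e.\ $\limsup_n\P\big(r_n\vee(-\ell_n)>An^{1/4}\big)\to0$ as $A\to\infty$; I would obtain this from the a.s.\ compactness of $\mathrm{supp}\,\mu_{ISE}$ together with standard width estimates for the discrete model (see e.g.\ \cite{BM}), or directly from the profile‑moment bounds discussed below.

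The third coordinate is a deterministic functional of the second. Since $S_k^n=\sum_{j\le k}M_j^n$, one has $\frac{1}{n}S_{n^{1/4}t}^{n}=\int_{-\infty}^t\frac{1}{n^{3/4}}M_{n^{1/4}s}^{n}\,ds+\varepsilon_n(t)$, where $\varepsilon_n$ is the difference between a Riemann sum and the integral of the piecewise‑linear interpolant of $M^n$; this difference \emph{telescopes}, leaving $\varepsilon_n(t)=\frac{1}{2n^{1/4}}\,\frac{1}{n^{3/4}}M_{\lfloor n^{1/4}t\rfloor}^{n}+O(n^{-1/4})\to0$ uniformly (using that $\frac{1}{n^{3/4}}M^n$ is tight). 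Granting the support tightness above, the map sending the second coordinate to its improper antiderivative is continuous along the relevant sequences, so the third coordinate converges to $\int_{-\infty}^t f_{ISE}(s)\,ds$; and since $\frac1nS_{+\infty}^{n}=\frac1n\sum_kM_k^n=1$ identically, the limit has value $1$ at $+\infty$, placing it in $C_{0,1}$.

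The heart of the matter, and the step I expect to be the real obstacle, concerns the first coordinate $\zeta^n_1(t)=\frac{1}{n^{1/2}}\Delta_{n^{1/4}t}^{n}=n^{1/4}\big(\zeta^n_2(t)-\zeta^n_2(t-n^{-1/4})\big)$, a discrete derivative of $\zeta^n_2$ at the vanishing scale $n^{-1/4}$. Here I would establish a uniform moment estimate on the increments of $\zeta^n_1$: constants $(C_k)_{k\ge1}$ such that, for all $n$ and all $s<t$ in a fixed compact interval,
\[
\E\!\left[\big(\zeta^n_1(t)-\zeta^n_1(s)\big)^{2k}\right]\ \le\ C_k\,|t-s|^{\,k},
\]
together with a one‑point bound $\sup_n\E\big[(\zeta^n_1(t_0))^{2k}\big]<\infty$. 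Writing the increment as a second difference of $M^n$ and expanding the $2k$‑th power, the left‑hand side becomes an explicit alternating sum of joint moments $\E[M_{j_1}^{n}\cdots M_{j_{2k}}^{n}]$ with the indices drawn from four consecutive positions; the product formulas of Bousquet‑M\'elou and the first author~\cite{BM-C} supply these joint moments in closed form, and the work is to exploit the ensuing cancellations and verify that the scaling is precisely $n^{1/2}$ for $\Delta^n$, with the spatial exponent that produces the power $|t-s|^{k}$ (morally, $\Delta^n$ fluctuates in the spatial variable like a Brownian path). Granting this, the Kolmogorov--Chentsov criterion yields, for each $k$, tightness of $(\zeta^n_1)$ in the space of continuous functions on any compact set, with every subsequential limit $\gamma$‑H\"older for all $\gamma<\frac{k-1}{2k}$; letting $k\to\infty$ gives $\big(\tfrac12-\varepsilon\big)$‑H\"older limits for every $\varepsilon>0$. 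Since the support of $\zeta^n_1$ sits inside that of $\zeta^n_2$ enlarged by one unit, the support tightness already invoked promotes this to tightness in $C_K(\mathbb{R})$.

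It remains to identify the first‑coordinate limit as $f'_{ISE}$. By joint tightness I would pass to a subsequence along which $\zeta^n\to\big(D,f_{ISE},\int_{-\infty}^{\,\cdot}f_{ISE}\big)$, where $D$ is continuous by the H\"older bound. The exact discrete identity $M_k^n-M_0^n=\sum_{j=1}^{k}\Delta_j^n$ rescales to $\zeta^n_2(t)-\zeta^n_2(0)=\frac{1}{n^{1/4}}\sum_{j=1}^{\lfloor n^{1/4}t\rfloor}\zeta^n_1\big(j/n^{1/4}\big)$, and the right‑hand side differs from $\int_0^t\zeta^n_1(s)\,ds$ by a Riemann‑sum error controlled by the oscillation of $\zeta^n_1$ at scale $n^{-1/4}$, which tends to $0$ by the equicontinuity coming from the moment bound. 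Passing to the limit gives $f_{ISE}(t)=f_{ISE}(0)+\int_0^tD(s)\,ds$ with $D$ continuous, so $f_{ISE}$ is $C^1$ and $f'_{ISE}=D$ is continuous (recovering in particular the existence of a continuous version of $f'_{ISE}$). As $D$ is thereby characterised uniquely, every subsequential limit of $\zeta^n$ has the law of $\zeta=\big(f'_{ISE},f_{ISE},\int_{-\infty}^{\,\cdot}f_{ISE}\big)$, and hence the whole sequence converges in distribution to $\zeta$, as claimed.
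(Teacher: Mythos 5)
Your overall architecture matches the paper's: reduce everything to the first coordinate (the second is handled by Proposition~\ref{prop:BMJ}, the third by the discrete summation identity), prove a moment bound on increments of $n^{-1/2}\Delta^n$, apply the Kolmogorov--Billingsley criterion together with a one-point bound, and identify any subsequential limit as $f'_{ISE}$ through the exact identity $M^n_k=\sum_{j\le k}\Delta^n_j$, which forces convergence of the full sequence. All of that is sound and is essentially what the paper does.

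The genuine gap is exactly where you flag "the heart of the matter": the increment estimate $\E\big[(\zeta^n_1(t)-\zeta^n_1(s))^{2k}\big]\le C_k|t-s|^k$ is asserted, not proved, and the route you sketch for it is not a proof and is not obviously workable as stated. The paper's Lemma~\ref{lemma:momentBound} is the bulk of the argument: it does \emph{not} extract joint moments $\E[M^n_{j_1}\cdots M^n_{j_{2k}}]$ in closed form from the product formula of \cite{BM-C} (that formula gives the law of the whole profile, and resumming it into joint moments is precisely the hard analytic-combinatorics problem the authors defer to Conjecture~\ref{conj:main}-type discussions). Instead, the paper rewrites the alternating sum of moments as a signed count of binary trees with $2k$ marked vertices, groups trees by skeleton and scheme, computes branch generating functions in terms of the algebraic series $U$ and $B$, extracts coefficients by Hankel-contour analysis, and then runs a delicate exponent-counting argument (Lemma~\ref{lemma"exponentCounting}) showing that after the sign cancellations the dominant schemes are those in which the marked vertices attach in pairs --- which is what produces the exponent $\lceil p/2\rceil$ rather than the naive $p$. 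Without this (or an equivalent mechanism making the cancellations quantitative), the claimed bound, and hence the H\"older regularity and tightness of the first coordinate, is unsupported; the rest of your argument, including the identification of the limit and the treatment of the second and third coordinates, is correct but rests entirely on this missing lemma. You would also need the one-point bound $\sup_n\E[(n^{-1/2}\Delta^n_0)^2]<\infty$, which the paper proves by a separate (simpler) generating-function computation and which your proposal mentions but likewise does not establish.
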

This theorem implies that $f_{ISE}$ has a continuous derivative as conjectured in~\cite[Conjecture 2.3]{BM-J}.
In fact, our proof of tightness uses a moment-increment estimate that proves that $f'_{ISE}$ is a.s. $\left(\frac{1}{2}-`e\right)$-H\"older for any $`e>0$. The value $\frac{1}{2}$ is sharp (Lemma \ref{lemma:momentBound} and Proposition \ref{prop:bonus}), which proves the other half of~\cite[Conjecture 2.3]{BM-J}. Note also that the theorem shows that $f'_{ISE}$ vanishes at the boundary of the support of $f_{ISE}$ (since it is continuous).

\subsection{A translated version of ISE on $\R^+$}
\label{sec:tv}

The fact that the CRT is invariant under uniform re-rooting (see \cite{Ald1990}) has various consequences. The re-rooting of a CRT with contour process $c=2\se$ at position $r\in[0,1]$, is defined thanks to its contour process
\[c^{(r)}(x) = c(x+r \mod 1)+c(r) - 2 \min \{c(u), u \in [ r \wedge (x+r \mod 1),  (r \vee (x+r \mod 1)]\},\]
(which gives the distance of the node encodes by $x+r$ to the node encoded by $r$ in the tree with contour process $c$).
The rerooted snake is obtained by rerooting the underlying tree at the new root, by setting 0 as its new spatial position, and by keeping the spatial variation along branches.
The head of the obtained brownian snake satisfies 
\[(H^{(r)}(x),x \in [0,1])=( H(x+r \mod 1)-H(r),x \in [0,1]),\]
and $H^{(r)}\eqd H$. Since for a uniform variable $u\in[0,1]$, independent from $H$, the distribution of $H(u)$ is given by the occupation measure of $H$ (which is $\mu_{ISE}$), one has:
\[\mu_{ISE} \eqd \mu_{ISE}( . - X)\]
where $X$ has distribution $\mu_{ISE}$ (to be clear: the translation value $X$ is taken under the random measure $\mu_{ISE}$, that it translates). These considerations allow one to understand that if ones take $L = \min {\sf Support}(\mu_{ISE})$, then \[\mu^+:=\mu_{ISE}( . - L)\] is a random measure on $[0,+\infty)$ which can be used to describe $\mu_{ISE}$:
\[\mu_{ISE}  \eqd \mu_{ISE}^+( . - Y)\]
where, again, $Y$ is taken under the random measure $\mu_{ISE}^+$. Let us call $\mu_{ISE}^+$ the translated version of $\mu_{ISE}$, and denote by $\zeta^{+}(t)=(f_{ISE}^{+}{}'(t),f_{ISE}^{+},\int_{-\infty}^{t}f_{ISE}^{+}(s)dt)$ the corresponding encoding processes.
The process $\zeta^+$ is, in nature, a bit simpler than $\zeta$ since "it starts" at a deterministic abscissa, when $\zeta$ has a bilateral random support.

\subsection{A companion process and a discrete diffusion}
\label{sec:comp}

Bousquet-Mélou and the first author~\cite{BM-C} have given a  complete description of the law of the vertical profile $M(\bT_n)$ under $\U_n$:
\begin{proposition}[{\cite[Thm. 1]{BM-C}}]\label{pro:comb-delta-M} 
Let $\ell \in \mathbb{Z}^-, r\in \mathbb{Z}^+$, $m_i \in \{1,2,\dots\}$ for any $i\in\cro{\ell,r}$ and $\sum_{i=\ell}^r m_i=n$, and $m_{\ell-1}=m_{r+1}=0$. We have 
\beq \label{eq:pro}
\!\!\!\!\#\{T \in {\cal B}_n:M(T)=(m_{\ell},\dots m_{-1}; m_0,\dots m_r)\}=\! \frac{m_0\binom{m_{-1}+m_1}{m_0-1}}{m_\ell m_r}\prod_{\ell \leq i \leq r\atop{i\neq 0}}\!\!\binom{m_{i-1}+m_{i+1}-1}{m_i-1},
\eq 
where $\binom{a}{b}=0$ if $b>a$.
\end{proposition}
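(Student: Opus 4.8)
The plan is to pass to generating functions of binary trees confined to a fixed horizontal strip of columns, inside which the profile count becomes a plain monomial coefficient. For $\ell\le 0\le r$ set
\[
\widehat S_{\ell,r}(x;\mathbf u)\;=\;\sum_{T}x^{|T|}\prod_{v\in T}u_{\mathrm{absc}(v)},
\]
the sum ranging over all binary trees $T$ (including $T=\emptyset$, contributing $1$) all of whose vertices have abscissa in $\cro{\ell,r}$, with $\mathbf u=(u_i)_{\ell\le i\le r}$. Since the abscissa changes by $\pm1$ along every edge, the branch from the root to any vertex of abscissa $\ell$ (resp.\ $r$) meets every intermediate column; hence a tree confined to $\cro{\ell,r}$ that has a vertex in columns $\ell$ and $r$ has support exactly $\cro{\ell,r}$. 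Consequently, for any $m_i\ge1$ with $\sum_i m_i=n$,
\[
\#\{T\in\mathcal B_n:M(T)=(m_\ell,\dots,m_r)\}\;=\;\Big[x^n\prod_{i=\ell}^r u_i^{m_i}\Big]\,\widehat S_{\ell,r}(x;\mathbf u),
\]
so it is enough to determine $\widehat S_{\ell,r}$ and read off this coefficient.

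Next I would set up the transfer recursion. Decomposing a nonempty tree as (left subtree, root, right subtree): the root lies in column $0$ and weighs $xu_0$; the left subtree is an arbitrary binary tree confined to $\cro{\ell,r}$ whose abscissas, read in $T$, are its own abscissas shifted by $-1$ (its root being at abscissa $-1$ in $T$), and symmetrically on the right with a shift by $+1$. This yields
\[
\widehat S_{\ell,r}\;=\;1+xu_0\,\bigl(\tau_-\widehat S_{\ell+1,r+1}\bigr)\bigl(\tau_+\widehat S_{\ell-1,r-1}\bigr),
\]
where $\tau_\mp$ is the change of variables implementing those shifts, and a factor degenerates to $1$ as soon as the translated strip no longer contains column $0$. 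Written for the whole family of translated strips $\cro{\ell+k,r+k}$, these equations form a (finite) quadratic, continued-fraction-type system; solving it exhibits $\widehat S_{\ell,r}$ as a ratio of two explicit \emph{continuant} polynomials in the monomials $xu_\ell,\dots,xu_r$ — equivalently, as the generating function of certain weighted lattice paths confined to the strip, which is the form best suited to coefficient extraction.

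From this continued-fraction / lattice-path description, extracting $\bigl[x^n\prod_i u_i^{m_i}\bigr]$ reduces to counting weighted ordered families of paths crossing the strip, or alternatively to a multivariate Lagrange-inversion computation; the sum should telescope to the single product
\[
\frac{m_0}{m_\ell m_r}\binom{m_{-1}+m_1}{m_0-1}\prod_{\substack{\ell\le i\le r\\ i\neq0}}\binom{m_{i-1}+m_{i+1}-1}{m_i-1}.
\]
Heuristically, $\binom{m_{i-1}+m_{i+1}-1}{m_i-1}$ records how the $m_i$ vertices of column $i$ split the choice of their parents between the $m_{i-1}$ vertices of column $i-1$ and the $m_{i+1}$ vertices of column $i+1$; the factors $1/m_\ell$ and $1/m_r$ are cycle-lemma contributions from the two extreme columns (which contain only "ends" of the tree); and the anomalous shape of the $i=0$ factor, together with the numerator $m_0$, reflects that column $0$ is the unique column containing a parentless vertex, namely the root. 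The normalisation is pinned down by checking small cases (all profiles with $n\le4$, and the profile $(;1,\dots,1)$ of a right path).

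I expect the second step to be the genuine difficulty. Because the recursion \emph{translates} the abscissa variables instead of eliminating them, one cannot simply induct on the width: the correct two-parameter family of continuant polynomials has to be guessed and then verified by induction on the distance from $0$ of the strip's endpoints, and afterwards the resulting coefficient — a priori a complicated alternating sum — must be recognised as the clean product above. That recognition is precisely where a direct bijective proof would be far more transparent: one would like to strip off the two extreme columns by a cycle lemma and, in each interior column independently, interleave the incoming edges among the neighbouring columns, so that the three ingredients $1/(m_\ell m_r)$, the product of binomials, and the column-$0$ correction appear by construction. Producing such a bijection (or, failing that, proving the continuant identity directly) is the main obstacle.
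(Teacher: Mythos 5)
The paper does not actually prove this proposition: it is quoted as~\cite[Thm.~1]{BM-C} with no proof given here, so there is no ``paper's own proof'' to compare against. Your task was therefore to supply a proof, and what you have written is a plan with an explicitly acknowledged hole in the middle, not a proof.

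The first step is fine: the generating function $\widehat S_{\ell,r}(x;\mathbf u)$ over trees confined to a strip is a sensible object, the observation that a tree touching both end columns has support exactly $\cro{\ell,r}$ is correct, and the root decomposition does give the quadratic system you write (with the degeneration to $1$ when the translated strip misses column~$0$). But everything after that is asserted, not established. You claim that the translated system is of ``continued-fraction type'' and that $\widehat S_{\ell,r}$ is a ratio of ``continuant polynomials'' in the monomials $xu_\ell,\dots,xu_r$. That analogy is borrowed from $\pm1$ walks confined to a height strip; here the recursion is genuinely two-dimensional (it translates the whole family of $u$-variables simultaneously and the two factors point in opposite directions), and you give no argument that the solution retains a continuant form, nor what the two continuants actually are. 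Then you say coefficient extraction ``should telescope'' to the product formula --- but you offer nothing beyond the word ``should'', and in the final paragraph you yourself write that ``producing such a bijection (or, failing that, proving the continuant identity directly) is the main obstacle''. That is the gap: the entire content of the theorem --- why the answer is this particular product of binomials with the $m_0/(m_\ell m_r)$ prefactor --- is left unproved.

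Two smaller remarks. Your heuristic reading of $\binom{m_{i-1}+m_{i+1}-1}{m_i-1}$ as ``$m_i$ vertices choosing parents among $m_{i-1}+m_{i+1}$ candidates'' would give $\binom{m_{i-1}+m_{i+1}}{m_i}$, not the shifted binomial; the correct interpretation is via compositions (of $m_{i-1}+m_{i+1}$ into $m_i$ positive parts, in the spirit of the level-profile formula $\prod_i\binom{z_i+z_{i+1}-1}{z_i-1}$ mentioned in the paper), and the difference encodes precisely the connectivity/rooting constraints you would have to untangle. And the actual proof in~\cite{BM-C} does proceed via generating functions in the columns, so your direction is not unreasonable --- but the reference devotes a substantial argument to solving the system and identifying the coefficient, and none of that work is reproduced here.
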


Of course, for $\bT_n$ taken under $\U_n$,
$\mathbb{P}(M(\bT_n)=(m_{\ell},\dots m_{-1}; m_0,\dots m_r))$ is proportional to the right hand side (RHS) of \eref{eq:pro}. Rewrite this RHS a bit differently using that $\sum m_i=n$. For a normalising sequence $(\alpha_n,n\geq 1)$,  we have
\ben\label{eq:rep1}
\mathbb{P}(M(\bT_n)=(m_{\ell},\dots m_{-1}; m_0,\dots m_r))
&=&\frac{m_0\frac{m_1+m_{-1}}{m_{1}+m_{-1}-m_0+1}}{\alpha_n\,m_\ell m_r2^{m_{\ell}+m_r}} \prod_{\ell \leq i \leq r}\frac{\binom{m_{i-1}+m_{i+1}-1}{m_i-1}}{2^{m_{i-1}+m_{i+1}}}.
\een
\medskip

We now arrive at the main idea at the origin of this note:
the product in the formula of the law of $M(\bT_n)$ will lead us to observe that the process $M(T)$ can be (roughly) represented with the help of Markov chain $M^\star$.

However, because the $i$-th factor of the product depends on the numbers $m_{i+1}$ and $m_{i-1}$, to obtain a Markov chain representation we need to consider a tri-dimensional process: this is in fact the reason for the introduction of the process $Z(T)$.
Moreover, this Markovian representation will in fact hold only if we condition on the values of $Z(T)$ on the two boundaries of an interval.
We prove below that the companion process $M^{\star}$ (or rather $Z^\star$) possesses a diffusive limit: this will give the intuition that it should be also the case for $f_{ISE}$ (but this will not prove it, because of the difficulty of obtaining the same statement under a double conditioning).

\medskip

In order to parse \eref{eq:rep1}, recall that for any fixed positive $k$, the distribution defined by
\[p_k(n+k)=\binom{k-1+n}{k-1}2^{-n-k}, ~~~~n \geq 1,\]
is the binomial negative distribution $\BNEG(k)$; this distribution is that of the sum of $k$  i.i.d. geometric $1/2$ random variables $g^{(j)}$ (with support $\{1,2,\cdots,\}$, and then, mean 2):
\[p_k(n+k)=\P\l[\sum_{j=1}^k g^{(j)}=n+k\r]=\P\l[\sum_{j=1}^k (g^{(j)}-2)=n-k\r].\]
 Each factor in the product in the r.h.s. of \eref{eq:rep1} can thus be reinterpreted:
\ben  \label{eq:reecr22}
  \binom{(m_{i-1}+m_{i+1}-m_i)+m_i-1}{m_i-1}2^{-m_{i-1}-m_{i+1}}&=& \P\l(\sum_{j=1}^{m_i} (g^{(j)}-2)=\delta_{i+1}-\delta_i\r)
\een
where $\delta_{i}=m_i-m_{i-1}$.
Hence, if the prefactors in \eref{eq:rep1} were not there, then conditionally on $(M_k,k\leq i)$, the increment $\Delta_{i+1}=M_{i+1}-M_i$ would have the same distribution as $\Delta_i+\sum_{k=1}^{M_i} (g^{(k)}-2)$, and the process $(Z_j)$ would be a simple Markov chain (and $(S_j)$ would be a Markov Chain of order $3$).

 \medskip

To shed more light, we  introduce the \emph{companion process}, a time homogeneous Markov chain $(Z^\star_k,k\geq 0)= \l(\l(\Delta^\star_k,M^\star_k,S^\star_k\r),k\geq 0\r)$ taking its values in $\Z^3$ as follows:
conditionally on $(\Delta^\star_i,M^\star_i,S^\star_i)=(\delta_i,m_i,s_i)$, 
\beq\label{eq:DMS}
\left\{\begin{array}{cll}
\Delta^\star_{i+1} &=&\delta_i+\sum_{k=1}^{|m_i|} (g^{(k)}-2),\\
M^\star_{i+1}  &=& m_i+\Delta^\star_{i+1},\\
S^\star_{i+1} &=& s_i+M^\star_{i+1}. \end{array} \right.
\eq
Moreover, if $M^{*}_k \leq 0$, then the process $Z^\star$ is killed at time $k$ (meaning that $Z^\star_{k+t}=Z^\star_k$ for all $t\geq 0$).
Of course, to fully specify the process, we should specify a starting time and value -- and we will when needed. 

The companion process $Z^\star$ and the tree-related process $Z(\bT_n)$ are related as follows.
\begin{proposition}\label{prop:discreteDiff}
The distribution of the companion process $Z^\star$ coincides with $Z(\bT_n)$ on intervals which do not straddle 0, when one fixes boundary conditions at the two extremities of the interval: Formally, take $\bT_n$ under $\U_n$. Fix integers $0<k_1<k_2$ (or $k_1<k_2<0$)
	and $(\delta_1,m_1,s_1)$, $(\delta_2, m_2,s_2)$ in $\mathbb{Z}\times\mathbb{Z}_{>0}\times \mathbb{Z}$.
	Then the laws of the vectors $(Z^\star_j, k_1\leq j \leq k_2)$ and $(Z_j(\bT_n), k_1\leq j \leq k_2)$ conditioned to take the value $(\delta_i,m_i,s_i)$ at $k_i$ for $i=1,2$, are equal.
\end{proposition}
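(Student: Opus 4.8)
The plan is to write out both conditional laws explicitly and to recognise that each is proportional to the same unnormalised weight, namely the ``middle'' block of the product appearing in \eqref{eq:rep1}. I treat the case $0<k_1<k_2$ (the case $k_1<k_2<0$ being symmetric), and I assume both conditioning events have positive probability, since otherwise the statement is empty. Set $a_1:=m_1-\delta_1$ and $a_2:=m_2-\delta_2$ for the values of $M$ forced at $k_1-1$ and at $k_2-1$; encode a trajectory of $Z$ on $\cro{k_1,k_2}$ by the vector $\vec m=(M_{k_1-1},\dots,M_{k_2})$ of its $M$-coordinates, which together with the conditioned value $s_1$ of $S_{k_1}$ determines the whole trajectory; and write $F_i:=\binom{m_{i-1}+m_{i+1}-1}{m_i-1}2^{-m_{i-1}-m_{i+1}}$ (a function of three consecutive values of $M$) for the generic factor of the product in \eqref{eq:rep1}. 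Call $\vec m$ \emph{admissible} if $M_{k_1-1}=a_1$, $M_{k_1}=m_1$, $M_{k_2-1}=a_2$, $M_{k_2}=m_2$, if $1\le m_i\le m_{i-1}+m_{i+1}$ for all $k_1\le i\le k_2-1$, and if $\sum_{k_1<i\le k_2}m_i=s_2-s_1$ (the last being exactly ``$S_{k_2}=s_2$, given $S_{k_1}=s_1$''). On the companion side, equations \eqref{eq:DMS} and \eqref{eq:reecr22} show that the one-step transition of $Z^\star$ from $(\delta_i,m_i,s_i)$ to $(\delta_{i+1},m_{i+1},s_{i+1})$ has probability $F_i$ (evaluated at $m_i-\delta_i,\,m_i,\,m_i+\delta_{i+1}$) when $m_i>0$, $m_{i+1}=m_i+\delta_{i+1}$ and $s_{i+1}=s_i+m_{i+1}$, and $0$ otherwise; a killed trajectory has reached $M^\star\le0$ and hence cannot end at $m_2\ge1$, so it drops out once one conditions on $Z^\star_{k_2}=z_2$. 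Therefore the law of $(Z^\star_j)_{k_1\le j\le k_2}$ conditioned on its values at $k_1$ and $k_2$ puts on each admissible $\vec m$ a mass proportional to $\prod_{i=k_1}^{k_2-1}F_i[\vec m]$, and mass $0$ on every non-admissible trajectory.

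On the tree side, the decisive remark is that $0\notin\cro{k_1,k_2}$: in \eqref{eq:rep1} this forces the distinguished prefactor $m_0\frac{m_1+m_{-1}}{m_1+m_{-1}-m_0+1}$ and the endpoint factors $1/(m_\ell2^{m_\ell})$, $1/(m_r2^{m_r})$ to involve only the coordinates $m_i$ with $i\le k_1-1$. Since $F_i$ involves only $m_{i-1},m_i,m_{i+1}$, one may factor, for a full profile $\mathbf m=(m_\ell,\dots,m_r)$,
\[\P(M(\bT_n)=\mathbf m)=\frac1{\alpha_n}\,\Phi_L(m_\ell,\dots,m_{k_1})\Big(\prod_{i=k_1}^{k_2-1}F_i\Big)\Phi_R(m_{k_2-1},\dots,m_r),\]
where $\Phi_L$ collects the prefactor together with all $F_i$ for $i\le k_1-1$, and $\Phi_R$ collects $1/(m_r2^{m_r})$ together with all $F_i$ for $i\ge k_2$. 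Now fix an admissible $\vec m$ and sum over all compatible left and right blocks; here the third coordinate is indispensable. Conditioning on $S_{k_1}=s_1$ pins $\sum_{i\le k_1-2}m_i=s_1-a_1-m_1=:\sigma_L$; admissibility of $\vec m$ makes $\sum_{k_1-1\le i\le k_2}m_i=a_1+m_1+(s_2-s_1)$ a fixed number; and then the global constraint $\sum_im_i=n$ pins $\sum_{i\ge k_2+1}m_i=n-s_2=:\sigma_R$. Both $\sigma_L$ and $\sigma_R$ are independent of $\vec m$, so summing $\Phi_L$ over left blocks of total mass $\sigma_L$ sharing the values $a_1,m_1$ gives a constant $C_L$, and $\Phi_R$ likewise gives a constant $C_R$; hence
\[\P\big((M_j(\bT_n))_{k_1-1\le j\le k_2}=\vec m,\ S_{k_1}(\bT_n)=s_1\big)=\frac{C_LC_R}{\alpha_n}\prod_{i=k_1}^{k_2-1}F_i[\vec m]\]
for admissible $\vec m$, and $0$ otherwise (a non-admissible $\vec m$ is never part of a profile). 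Note that the left-hand event automatically forces $S_{k_2}=s_2$ and all the other endpoint conditions, so it coincides with the event on which we condition. After normalisation the tree conditional law, too, assigns to $\vec m$ a mass proportional to $\prod_{i=k_1}^{k_2-1}F_i[\vec m]$ --- a priori only over those admissible $\vec m$ which extend to a genuine full profile.

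The step I expect to be the main obstacle is precisely this last proviso: that the two supports agree. One inclusion is immediate ($\vec m$ coming from an actual tree is admissible). For the reverse I would fix a full profile $\mathbf m_0$ realising the conditioning (it exists since the conditioning event is non-degenerate), keep its left block $(m^0_\ell,\dots,m^0_{k_1})$ and its right block $(m^0_{k_2-1},\dots,m^0_r)$, and glue them onto an arbitrary admissible $\vec m$; the three pieces agree on the overlapping positions $k_1-1,k_1$ and $k_2-1,k_2$ by consistency with $z_1,z_2$. The glued sequence has total mass $\sigma_L+(a_1+m_1+(s_2-s_1))+\sigma_R=n$, and all of its binomial factors from \eqref{eq:pro} are positive: those at indices $i\le k_1-1$ and $i\ge k_2$ involve exactly the $m$-values of $\mathbf m_0$, hence are positive, while those at $k_1\le i\le k_2-1$ are positive by admissibility of $\vec m$. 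Hence the glued sequence is the vertical profile of some binary tree realising $\vec m$, the two supports coincide, and the two conditional laws --- being probability measures proportional to the same weight on the same finite set --- are equal. The case $k_1<k_2<0$ follows by the mirror argument.
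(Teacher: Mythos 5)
Your proof is correct and takes essentially the same route as the paper: using \eqref{eq:rep1} and \eqref{eq:reecr22}, both conditional laws are identified as probability measures proportional to the same product of factors $\prod_{i=k_1}^{k_2-1}F_i$ over the interval, hence equal. Your explicit summing-out of the left and right blocks and the gluing argument matching the two supports simply spell out what the paper's proof declares ``immediate to check''.
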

Observe that since $m_2>0$, this condition implies that under the conditional distribution, the second component of $Z^\star$ stays positive on $[k_1,k_2]$.
\begin{proof}
Take an element  $[z_j'=(d_j',m_j',s_j'), k_1\leq j \leq k_2]$ such that for all $k_1<j<k_2$, $m_j'>0$, $m_{j}'=m_{j-1}'+d_j', s_j'=s_{j-1}'+m_j'$ (so that $(s_j')$ is increasing), and moreover at the boundary $z_{k_j}'=(\delta_j,m_j,s_j)$ for $j \in \{1,2\}$. Using \eref{eq:reecr22}, it is immediate to check that $\P(Z^\star_j=z_j', k_1\leq j \leq k_2 ~|~Z^\star_{k_1}=z_{k_1}', Z^\star_{k_2}=z_{k_2}')$ and $\P(Z_j(\bT_n)=z_j', k_1\leq j \leq k_2 ~|~Z_{k_1}(\bT_n)=z_{k_1}', Z_{k_2}(\bT_n)=z_{k_2}')$ are proportional, and then, are equal.
\end{proof}

Notice that the prefactor in \eref{eq:rep1} says something about the root position (the position of "0" in the interval $[\ell,r]$), as well as a kind of cost of the extremal values $(M_{\ell},M_r)$. For simplicity, we have chosen in Proposition \ref{prop:discreteDiff} to consider only intervals avoiding zero. This allows one to work with nicer formulas. Section \ref{sec:tv}, in which the translated version $\zeta^{+}$ is introduced, suggests that the root position is not important, and can be thought to be close to the left support.

Note also that the re-rooting invariance of the continuum model (Section \ref{sec:tv} again) is not exactly present in discrete binary trees. 

Finally, note that the prefactor in \eref{eq:rep1} and the global condition of positivity of $M(\bT_n)$ on $\cro{\ell,r}$ make the global study of \eqref{eq:rep1} quite difficult; with intervals avoiding zero we avoid these difficulties.

\subsection{Convergence to a diffusion for the companion process attached at the left boundary.}
\label{sec:diffApprox}

In view of \eqref{eq:DMS}, and since the sum $\sum_{k=1}^{|m_i|} (g^{(k)}-2)$ of centred i.i.d. variables should be well approximated by a Gaussian of variance $|2m_i|$, it can be expected that $Z^\star$ (started at time 0) will converge in distribution, after an appropriate rescaling, to a process 
$\zeta_t^* = (\delta^*_t,m^*_t,s^*_t)$
solution to the stochastic differential equation
\ben\label{eq:hdgfygu} 
\delta_t^\star &=& \delta_0+\int_{0}^t \sqrt{2m_x^\star}\, dW(x),~~
m_t^\star=m_0+\int_0^t \delta^\star_x\, dx,~~ 
s_t^\star=s_0+\int_0^t m_x^\star dx
\een
where $W(t)$ is a standard Brownian motion.
Note that the dynamics of the process can be encapsulated in the unique SDE
\begin{align}\label{eq:SDEcompact}
d \big( (m^*_t)' \big) = \sqrt{2m^*_t} dW_t,
\end{align}
which is some "order-2" diffusion (note that the S-coordinate plays no direct role in the dynamics).

 Since $x\mapsto \sqrt{x}$ is not Lipschitz (at 0) and $M^\star$ is stucked when it becomes negative, however, some precautions will be needed.

\paragraph{A stuck version of the companion process.}

The process $Z^\star$ is well defined for any initial distribution with support in $\mathbb{Z}^3$, however, we are only interested in its behaviour when $M^\star\geq 0$ (we will condition on that event). Moreover, the convergence result we are about to state needs the diffusion coefficient to be positive: we choose to stuck $Z^\star$ when a certain level $K>0$ is hit by  $M^\star$. 
For $K>0$ fixed, denote by
\[T_K= \inf\{t >0: M^\star_t<K\}\]
we define $\P_{\geq K}$ the distribution of the Markov chain $\bar{Z}^{[K]}$ obtained from $Z^\star$ as follows:
\[\bar{Z}_t^{[K]}= Z^\star_{t \wedge T_K}.\]
The process $Z^\star$ is time homogeneous. To describe its limit, we work on the time space $\{0,1,2,\cdots\}$.
Let us fix a constant $T>0$, and set, for all $k\geq 0$, $n \geq 1$,
\[\tkn{k} = kn^{-1/4}, ~~N_n=\min\{k:\tkn{k}>T\}=\floor{Tn^{1/4}}+1.\]
Set $\xi^{n,\star}=(\delta^{n,\star},m^{n,\star},s^{n,\star})$ as the càdlàg process, constant on $[\tkn{k},t_{k+1}^n)$, and satisfying
\[\xikn{}(\tkn{k}) =\bma n^{-1/2} \Delta_{k}^\star \\ n^{-3/4} M_{k}^\star\\n^{-1} S_k ^\star\ema .\] 
The process $\l(\xikn{}(\tkn{k}), k\geq 0\r)$ is a Markov chain.

As usual, denote by  $D([0,T],\R^3)$ the set of càdlàg functions defined on $[0,T]$ taking their values in $\R^3$, equipped  with the Skorokhod topology. We write $\bar{\xi}^n$ for the stucked version.  
\begin{theorem}\label{thm:diffApprox} Let $(\delta_0,m_0,s_0) \in \R \times (0,+\infty) \times \R$, $T>0$ fixed, and $(\delta^n_0,m^n_0,s^n_0)$ be a sequence in $\Z^3$ such that $m^n_0>0$.
If 
\[z_0^n:=\l(\delta^n_0/n^{1/2},m^n_0/n^{3/4},s^n_0/n\r)\to z_0:=(\delta_0,m_0,s_0)\] then, for any $0<`e<m_0$, there is a Brownian motion $(W(t),t\geq 0)$ and a random process $\xi^\star(.)$ non-anticipative with respect to $W(.)$ so that,
  under $\P_{\geq `e n^{3/4}}$, $(\bar{\xi}^n)$   starting at initial position $z_0^n$,  converges in distribution in $D([0,T],\R^3)$  to $\xi^\star$, unique solution of the stochastic differential equation
  \ben\label{eq:qfge}
  \xi^\star(t)=\xi(0) +\int_0^t f(\xi^\star(s),s) ds + \int_0^t \sigma(\xi^\star(s),s) dW(s)
  \een
  where $\xi^\star(0)= z_0$, stuck when its second entry hits $`e$ and
  \[f\l(\bma d\\m\\s\ema,t\r)=\bma 0\\ d \\ m\ema,~~\sigma\l(\bma d\\m\\s\ema,t\r)=\bma \sqrt{2|m|}\\0\\0\ema. \] 
  This means that, before being stuck when $m_t^\star=`e$,  $\zeta^\star(t)=(\delta^\star_t,m^\star_t,s^\star_t)$ satisfies
  \eref{eq:hdgfygu}.
  \end{theorem}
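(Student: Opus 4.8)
The plan is to set this up as a standard diffusion-approximation (invariance-principle) argument for Markov chains, in the spirit of Ethier--Kurtz, with the extra care forced by the non-Lipschitz square-root coefficient and the stuck boundary. First I would reduce the problem: since the chain is stuck as soon as $M^\star$ drops below $`e n^{3/4}$, everything happens in the region where $m \geq `e$ (in rescaled coordinates), where $\sqrt{2|m|}=\sqrt{2m}$ is smooth and bounded below by $\sqrt{2`e}>0$; so I may freely replace $\sigma$ by a globally Lipschitz, bounded modification that agrees with $\sqrt{2m}$ on $\{m \geq `e\}$, solve the SDE \eqref{eq:qfge} with that modified coefficient (existence and uniqueness of the solution $\xi^\star$ is then classical), and note the stuck versions of the modified and original problems coincide. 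This legitimizes the phrase ``properly tamed near $0$'' and removes the only genuine analytic difficulty from the limit object.

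Next I would identify the infinitesimal drift and covariance of the rescaled chain. Write $X^n_k = \xi^{n,\star}(t^n_k) = (n^{-1/2}\Delta^\star_k, n^{-3/4}M^\star_k, n^{-1}S^\star_k)$, with time step $h_n = n^{-1/4}$. From \eqref{eq:DMS}, conditionally on $X^n_k = (d,m,s)$ with $m = n^{-3/4}M^\star_k$, the increment of the first coordinate is $n^{-1/2}\sum_{j=1}^{M^\star_k}(g^{(j)}-2)$, which is a centred sum of $M^\star_k = n^{3/4}m$ i.i.d.\ mean-zero variance-$2$ terms; hence its conditional mean is $0$ and its conditional variance is $n^{-1}\cdot 2 n^{3/4} m = 2m\, h_n$. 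The second coordinate increments by $n^{-3/4}\Delta^\star_{k+1} = h_n\, d + o(h_n)$ (the correction being the first-coordinate increment divided by $n^{3/4}$, of order $n^{-3/4}\sqrt{h_n \cdot n^{3/4}}= n^{-1/2}\sqrt{h_n}$, negligible after summing). The third coordinate increments by $n^{-1}M^\star_{k+1} = h_n\, m + o(h_n)$. So the per-step drift vector is $h_n(0,d,m) + o(h_n)$, matching $f$, and the per-step covariance matrix is $h_n\,\mathrm{diag}(2m,0,0) + o(h_n)$, matching $\sigma\sigma^\top$; the higher moments of the increments are $o(h_n)$ because the $g^{(j)}-2$ have all exponential moments, giving the standard Lindeberg/negligibility bound $\E[\|X^n_{k+1}-X^n_k\|^4 \mid X^n_k] = O(h_n^2)$ uniformly on the (stopped) state space.

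With these estimates in hand I would invoke the martingale-problem / generator-convergence machinery: show the generators $A_n \varphi(x) = h_n^{-1}\E[\varphi(X^n_{k+1})-\varphi(X^n_k)\mid X^n_k = x]$ converge, uniformly on compacts of $\{m\geq `e\}$, to $A\varphi(x) = f(x)\cdot\nabla\varphi(x) + \tfrac12\,(\sigma\sigma^\top)(x):\nabla^2\varphi(x)$ for $\varphi \in C^\infty_c$, establish tightness of $(\bar\xi^n)$ in $D([0,T],\R^3)$ via the Aldous--Rebolledo criterion (using the moment bound above together with the boundedness of coefficients on the stopped domain), and conclude that every subsequential limit solves the stopped martingale problem for $A$, which by the uniqueness secured in the first paragraph pins down the limit as $\xi^\star$; the convergence $z_0^n \to z_0$ handles the initial condition. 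The stuck time $T_{`e n^{3/4}}$ converges to the hitting time of $`e$ by $m^\star$ by a standard continuity-of-hitting-times argument (the limit diffusion $m^\star$ crosses level $`e$ transversally a.s., since at $m^\star = `e$ the process $\delta^\star$ is a.s.\ nonzero and, more robustly, $m^\star$ is nondegenerate there), so the stopped processes converge as well.

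The main obstacle I expect is not any single inequality but the bookkeeping around the boundary and the square-root singularity simultaneously: one must make sure the taming at level $`e$ is done so that (i) the limit SDE genuinely has a pathwise-unique solution up to the stuck time, (ii) the discrete stuck time and continuous stuck time match in the limit despite the chain being defined on a lattice, and (iii) the drift-correction terms I called $o(h_n)$ above are controlled \emph{uniformly} even as $M^\star_k$ can be as small as $`e n^{3/4}$ — which is exactly why the lower bound $m \geq `e$ (hence $\sigma$ bounded below) is essential and why the theorem is stated under $\P_{\geq `e n^{3/4}}$ rather than $\P_{\geq 0}$. Everything else is a routine, if lengthy, application of the Stroock--Varadhan/Ethier--Kurtz program.
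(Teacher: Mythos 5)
Your proposal is correct in substance and structure, but it takes a genuinely different route from the paper. You propose an Ethier--Kurtz/Stroock--Varadhan argument (show convergence of the discrete generators $A_n$ to the diffusion generator $A$ on compacts of the stopped domain, get tightness from Aldous--Rebolledo via the fourth-moment bound, identify subsequential limits as solutions of the stopped martingale problem, and conclude by uniqueness). The paper instead invokes Kushner's 1974 theorem on weak convergence of interpolated Markov chains, which packages exactly this kind of result into a short checklist of conditions (boundedness and convergence of the discrete drift and diffusion coefficients $f_n,\sigma_n$ to $f,\sigma$, a $(2+\alpha)$-moment Lindeberg condition, Lipschitzness/well-posedness of the limit SDE on a good set). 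The paper defines a good set $GS(`e,C)=[-C,C]\times[`e/2,C]\times[0,C]$ on which $f,\sigma$ are bounded and Lipschitz, deliberately putting the lower boundary at $`e/2$ (strictly below the level $`e$ at which the process is stuck) so that the domain of well-posedness of the SDE properly contains the relevant trajectories, and then verifies Kushner's conditions (1), (A1)--(A6) one by one; the tightness/Lindeberg condition (A4) is checked by Marcinkiewicz--Zygmund applied to the centred geometric sums. Your proposal reaches the same core computations (drift $(0,d,m)$, covariance $\mathrm{diag}(2m,0,0)$ at leading order, uniform higher-moment bounds coming from exponential moments of the geometric increments) and handles the square-root singularity and the stuck boundary in the same spirit (work only where $m\geq `e$, tame the coefficient there). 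What the paper's Kushner route buys is a self-contained checklist with minimal further argumentation, in particular no separate continuity-of-hitting-times step: the convergence of the stuck versions falls out of taking the good-set lower boundary $`e/2$ strictly below $`e$. Your martingale-problem route buys generality and familiarity, but you would need to be a little careful to turn the informal continuity-of-hitting-times argument into a precise statement; the paper's $`e/2$ device is a clean shortcut for exactly that. One small observation: you correctly flag that the second and third coordinate increments have lower-order random corrections that are $o(h_n)$, whereas the paper presents the drift relation $\E_{\mathcal{B}^n_k}(\xi^{n,\star}_{k+1}-\xi^{n,\star}_k)=f_n\,dt^n_k$ as exact and $f_n=f$; this glosses over an $O(n^{-1/2})$ correction in the third coordinate, but it is harmless since Kushner's condition (1) tolerates a vanishing discrepancy. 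Your accounting is slightly more careful on that point.
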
 

Unfortunately, we are not able to prove a ``local-limit'' statement that would be the analogue of Theorem~\ref{thm:diffApprox} when the processes are conditioned by their value on the two boundaries of an interval of the form $[t_1,t_2]$. 
Moreover, since the process $f_{ISE}$ vanishes at the boundary of its support (since it is a.s. continuous on $\R$), the approximation given by Theorem~\ref{thm:diffApprox} is not sufficient to describe entirely the process $\zeta^+$ (since $`e>0$).

If we could overcome these difficulties, we would obtain by Proposition~\ref{prop:discreteDiff} that the process
$\zeta$ (defined in Theorem \ref{thm:tightness})
behaves as a diffusion, at least on any compact sub-interval of its support.

We hope that experts of diffusion approximations could be able to bridge these gaps. In the next section, we ask explicit questions in this direction.

\subsection{Some questions and conjectures}
\label{seq:sq}

\subsubsection{Questions on ISE}

 Maybe the most direct question that follows from the previous discussion is to know whether one can add a $dt$ term to~\eqref{eq:SDEcompact} to obtain an SDE that would completely describe the process $\zeta$. It is more natural to formulate it for the translated process $\zeta^+$ (defined in Section~\ref{sec:tv}) to avoid dealing with the bias at zero.
\begin{conjecture}\label{conj:main}
There is a continuous function $g$ such that 
the following stochastic differential equation holds, for $t>0$:
$$
d \big( f_{ISE}^{+}{}'(t) \big) = \sqrt{2f_{ISE}^+(t)} dB_t + g\left(f_{ISE}^{+}{}'(t), f_{ISE}^+(t),\int^t_{-\infty} f_{ISE}^+(s)ds \right)dt.
$$
\end{conjecture}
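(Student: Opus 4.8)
The plan is to deduce Conjecture~\ref{conj:main} from two ingredients: (i) a Markov property, read in the abscissa variable $t$, for the triple $\zeta^{+}=(f_{ISE}^{+}{}',f_{ISE}^{+},\int_{-\infty}^{\cdot}f_{ISE}^{+}(s)ds)$; and (ii) a Doob--Meyer decomposition of the semimartingale $t\mapsto f_{ISE}^{+}{}'(t)$ whose martingale part is forced, by a scaling computation, to equal $\int_0^{\cdot}\sqrt{2f_{ISE}^{+}(s)}\,dB_s$. I see two routes to (i). The \emph{discrete route} would upgrade Proposition~\ref{prop:discreteDiff} and Theorem~\ref{thm:diffApprox} to a one-sided statement: show that the companion chain $Z^\star$, conditioned at a \emph{single} boundary but retaining the positivity constraint $M^\star>0$ and the endpoint prefactor of \eqref{eq:rep1}, already coincides in the scaling limit with $Z(\bT_n)$, and then pass to the limit as in Theorem~\ref{thm:tightness}. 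The \emph{intrinsic route} would prove directly that $\zeta^{+}$ is Markov, using the re-rooting invariance of Section~\ref{sec:tv} (which singles out the bottom of the support as a canonical starting abscissa) together with the branching structure of the Brownian snake: exploring $\mu_{ISE}$ by increasing abscissa should reveal at each level a splitting analogous to \eqref{eq:DMS}, whose continuum limit produces the coefficient $\sqrt{2f_{ISE}^{+}}$; this should be comparable to the known SPDE for the density of one-dimensional super-Brownian motion, integrated over its mass-time parameter.

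\textbf{The diffusion coefficient is the easy part.} Granting that $f_{ISE}^{+}{}'$ is a semimartingale in $t$, its bracket is essentially forced. The discrete increment $\Delta^n_{i+1}-\Delta^n_i=\sum_{k=1}^{M^n_i}(g^{(k)}-2)$ has conditional variance $2M^n_i$, so under the rescaling of Theorem~\ref{thm:tightness} (time step $n^{-1/4}$, first coordinate $n^{-1/2}\Delta^n$, $M^n\sim n^{3/4}f_{ISE}^{+}$) one obtains $\langle f_{ISE}^{+}{}'\rangle_t=2\int_0^t f_{ISE}^{+}(s)\,ds$, hence a Brownian motion $B$ with $d\big(f_{ISE}^{+}{}'(t)\big)=\sqrt{2f_{ISE}^{+}(t)}\,dB_t+dA_t$ for some adapted finite-variation process $A$. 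This is coherent with Theorem~\ref{thm:tightness} and Proposition~\ref{prop:bonus}: a process with this bracket has exactly $(\tfrac12-`e)$-Hölder increments and no better.

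\textbf{Pinning down the drift.} It then remains to show that $dA_t=g(\zeta^{+}(t))\,dt$ for a continuous $g$ on the relevant domain (the coordinates of Conjecture~\ref{conj:main}). Once (i) is available, $A$ is a continuous additive functional of $\zeta^{+}$, and I would identify $g$ as the limit of $h^{-1}\E\big[f_{ISE}^{+}{}'(t+h)-f_{ISE}^{+}{}'(t)\mid\zeta^{+}(t)=(d,m,s)\big]$ as $h\downarrow 0$; existence and absolute continuity in $t$ would come from the generator of the diffusion, while continuity of $g$ would follow from a locally uniform version of the discrete computation. Heuristically, the missing $dt$-term in \eqref{eq:SDEcompact} is the logarithmic derivative of the $h$-transform implementing, in the continuum, the endpoint prefactor of \eqref{eq:rep1} together with the constraint $f_{ISE}^{+}>0$; one expects it to be a universal Bessel/Feller-type function of the three coordinates, degenerating as the second coordinate $m\downarrow 0$ in such a way that $f_{ISE}^{+}$ is repelled from $0$ inside its support but returns continuously to $0$ at the boundary.

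\textbf{Main obstacle.} The crux is (i), and within it the removal of the \emph{second} boundary conditioning: Proposition~\ref{prop:discreteDiff} is genuinely two-sided, and — as observed after its proof — the endpoint prefactor and the global positivity of $M(\bT_n)$ on $\cro{\ell,r}$ are precisely what obstructs a one-sided local-limit statement. This is the same gap acknowledged after Theorem~\ref{thm:diffApprox}, and it is aggravated here because $f_{ISE}^{+}$ vanishes at the boundary of its support, exactly where $\sqrt{\cdot}$ fails to be Lipschitz, so that neither Theorem~\ref{thm:diffApprox} nor a naive pathwise-uniqueness argument reaches the boundary. A second, potentially as serious, difficulty is to show that $A$ depends on the three coordinates alone rather than on the full past of $\zeta^{+}$: should the triple fail to be Markov, the natural remedy is to find its minimal Markovian enrichment and to state the SDE there, which would alter but not invalidate the conjecture.
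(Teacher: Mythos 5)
The statement you were asked to prove is Conjecture~\ref{conj:main}, which the paper itself leaves open: no proof is given, only heuristics (the companion-process reformulation of \eqref{eq:rep1}, the one-sided diffusion approximation of Theorem~\ref{thm:diffApprox}, and the missing two-sided / boundary statements recorded as Conjectures~\ref{conj:diffApprox2} and~\ref{conj:diffApprox3}). Your ``proposal'' is, correspondingly, not a proof but a programme, and you say so yourself.

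As a programme, it matches the paper's own discussion closely and accurately. Your ``discrete route'' is exactly the paper's suggested path via Proposition~\ref{prop:discreteDiff} and Theorem~\ref{thm:diffApprox}, and you correctly single out the same three obstacles the authors name: the conditioning at both boundaries in Proposition~\ref{prop:discreteDiff}, the endpoint prefactor in \eqref{eq:rep1} together with the global positivity of $M(\bT_n)$, and the degeneracy of $\sqrt{\cdot}$ at the boundary of the support where $f_{ISE}^{+}$ vanishes (which is why the $`e$-cutoff in Theorem~\ref{thm:diffApprox} cannot simply be removed). Your derivation of the quadratic variation $\langle f_{ISE}^{+}{}'\rangle_t=2\int_0^t f_{ISE}^{+}(s)\,ds$ from the conditional variance $2M^n_i$ of the discrete increment, under the scaling of Theorem~\ref{thm:tightness}, is the same computation that underlies \eqref{eq:gerdq2} and Proposition~\ref{prop:bonus}, and it is a correct formal justification of the diffusion coefficient. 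Your ``intrinsic route'' (re-rooting, exploring the snake by increasing abscissa, comparison with SPDEs for super-Brownian density) goes a little beyond what the paper spells out, but it is consistent with the paper's stated hope for ``a direct approach from the continuum.''

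The genuine gaps are the ones you flag, and they are exactly the ones the paper flags: (a) establishing the Markov property of $\zeta^{+}$ (i.e.\ removing the second boundary conditioning and incorporating the prefactor and positivity constraint); (b) extending the diffusion approximation to the boundary $m=0$; and (c) ruling out the possibility that the drift $A$ depends on more of the past than $\zeta^{+}(t)$, i.e.\ that the natural state space needs enlarging. None of these is resolved in your writeup, so this does not constitute a proof of Conjecture~\ref{conj:main}; but as a faithful account of what a proof would need and where the difficulties lie, it is aligned with the paper.
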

The conjecture implies a similar equation for the unshifted process $\zeta$ on $(0,+\infty)$. The law on $(-\infty,0)$ should be more complex since each trajectory has to be biased by the value $f_{ISE}(0)$ (see the rerooting property in Section~\ref{sec:tv}). It is natural to expect a similar SDE in which the function $g$ depends on a fourth parameter, $t$.

Proving that conjecture would be very interesting, especially if the function $g$ can be expressed explicitly.
A possible approach to this question would be to try to re-sum the product formulas of~\cite{BM-C} to obtain, at the discrete level, the explicit multivariate generating functions encoding the conditional transition probabilities for the process $\zeta^n$. While we believe that may be approachable while staying in the realm of algebraic functions (recall that a function is algebraic if it is the solution of a non-null polynomial equation whose variables are the function itself and its parameters), the subsequent analytic combinatorics in several variables required might lead to important technical difficulties. We hope that a direct approach from the continuum could lead to better solutions.
In any case, this suggests that the function $g$ in Conjecture~\ref{conj:main} could be algebraic and even quite explicit.

Another natural goal would be to try to prove the missing ``local limit'' version of Theorem~\ref{thm:diffApprox}. 
\begin{conjecture}[two-sided version of Theorem~\ref{thm:diffApprox}]\label{conj:diffApprox2}
For $i=0,1$, let $(\delta_i,m_i,s_i) \in \R \times (0,+\infty) \times [0,1]$, and $(\delta^n_i,m^n_i,s^n_i)$ be a sequence in $\Z^3$ such that $m^n_i>0$.
Assume 
\[z_i^n:=\l(\delta^n_i/n^{1/2},m^n_i/n^{3/4},s^n_i/n\r)\to z_i:=(\delta_i,m_i,s_i), \textrm{ for }i\in\{0,1\}.\]
Let $0<`e<s_1$, and consider the process $\xi^\star(t)$ as in Theorem~\ref{thm:diffApprox}, stuck when $m^\star$ hits $`e$, and started at $z_0$ for $t=0$.
Then, the discrete process $\P_{\geq `e n^{3/4}}$, $(\bar{\xi}^n)$ started at time $0$ at position $z_0^n$ and conditioned to take value $z_1^n$ at time $t_1$, converges in distribution in $D([0,t_1],\R^3)$  to $\xi^\star$, started at position $(\delta_0,m_0,s_0)$ conditioned by $\xi^{\star}(t_1)=z_1$.
\end{conjecture}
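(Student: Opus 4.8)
Conjecture~\ref{conj:diffApprox2} is a \emph{bridge} version of Theorem~\ref{thm:diffApprox}, so the route I would take is to realise both the conditioned discrete chain and the target as Doob $h$-transforms and to reduce the statement to the convergence of the conditioning kernels. Write $p^n_k(z,z')$ for the one-step transition probabilities of the $`e$-stuck chain $\bar\xi^n$ of Theorem~\ref{thm:diffApprox}, and set $h^n_k(z):=\P_{\geq `e n^{3/4}}\big(\bar\xi^n \text{ equals } z_1^n \text{ at time } t_1 \;\big|\; \bar\xi^n(\tkn{k})=z\big)$. The chain conditioned by $\bar\xi^n(\tkn{0})=z_0^n$ and $\bar\xi^n(t_1)=z_1^n$ is then the time-inhomogeneous Markov chain with transitions $\widetilde p^n_k(z,z')=p^n_k(z,z')\,h^n_{k+1}(z')/h^n_k(z)$; and the object in the conjecture is, by definition, the $h$-transform of $\xi^\star$ by $h(t,z)=p^\star_{t_1-t}(z,z_1)$, where $p^\star$ is the transition kernel of the diffusion \eqref{eq:hdgfygu} absorbed at $\{m=`e\}$ — equivalently the bridge from $z_0$ to $z_1$ over $[0,t_1]$ conditioned to keep its $m$-coordinate above $`e$ (this ``taboo bridge'' carries positive mass under the conjecture's hypotheses, using Hörmander's condition for \eqref{eq:hdgfygu} to know $p^\star_{t}$ is a genuine smooth density on $\R^3$ for $t>0$). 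So the conjecture follows once one proves (i) a \emph{local limit theorem} for $\bar\xi^n$ and (ii) that the $h$-transformed discrete chains converge to the $h$-transformed diffusion.

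For (i), the precise input needed is that, with $c_n=n^{-1/2}\cdot n^{-3/4}\cdot n^{-1}=n^{-9/4}$ the volume of one lattice cell of $\xikn{}$, one has
\[c_n^{-1}\,\P_{\geq `e n^{3/4}}\big(\bar\xi^n(t_1)=z'_n \;\big|\; \bar\xi^n(t)=z_n\big)\;\longrightarrow\;p^\star_{t_1-t}(z,z'),\]
locally uniformly for $z_n\to z$, $z'_n\to z'$ in compact subsets of $\R\times(`e,\infty)\times\R$ and $t$ in compact subsets of $[0,t_1)$, together with matching Gaussian-type upper bounds so that the ratios defining $\widetilde p^n_k$ are controlled and the same estimate applies after absorption at the barrier. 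Granting this, (ii) is handled exactly as the proof of Theorem~\ref{thm:diffApprox}: multiplying the one-step law by $h^n_{k+1}(z')/h^n_k(z)$ and using the local limit theorem together with a Taylor expansion of $z\mapsto\log p^\star_{t_1-t}(z,z_1)$ produces, to leading order, the classical Doob extra drift $a(z)\,\nabla_z\log p^\star_{t_1-t}(z,z_1)$ (which lives only in the $\delta$-coordinate, since $a$ has only its $(1,1)$-entry nonzero), while leaving the covariance $a(z)$ and the jump-negligibility estimates of Theorem~\ref{thm:diffApprox} unchanged. One then re-runs, verbatim, the tightness bounds and the martingale-problem identification of that theorem for the time-inhomogeneous transformed chain on each interval $[0,t_1-\eta]$, obtaining convergence there to the $h$-transformed diffusion; a separate endpoint-tightness argument, using that the bridge drift $a\,\nabla_z\log p^\star_{t_1-t}$ is integrable near $t_1$ and pulls the trajectory to $z_1$, upgrades this to convergence on all of $[0,t_1]$.

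I expect the decisive obstacle to be the local limit theorem (i) — which is precisely the ``local-limit statement'' the authors say they cannot prove. The chain $\bar\xi^n$ is \emph{degenerate/hypoelliptic} (only the $\delta$-coordinate carries noise, the other two being iterated integrals of it, a bracket-generating structure), its $\delta$-increments are \emph{unbounded} (sums of $\sim n^{3/4}$ geometric variables), and it is \emph{absorbed} at the moving barrier $`e n^{3/4}$; proving a sharp local CLT with Gaussian-type two-sided bounds uniformly down to the barrier for such a chain is the real work. Two attacks seem plausible. A \emph{combinatorial/analytic} one would start from the exact product formula \eqref{eq:pro} (equivalently \eqref{eq:rep1}) of~\cite{BM-C}, which gives the joint law of $(M_k(\bT_n))_k$ and hence, via Proposition~\ref{prop:discreteDiff} on an interval avoiding $0$, an explicit expression for the conditioned two-point probabilities, whose asymptotics one would extract by multivariate singularity analysis / ACSV. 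A \emph{probabilistic} one would proceed via characteristic functions and an Edgeworth-type expansion adapted to the iterated-integral structure, combined with heat-kernel estimates for the limiting Hörmander diffusion $p^\star$ and a reflection/absorption argument at $`e$. Either way, the off-diagonal Gaussian tails and the uniformity as $t\to t_1$ are the quantitative heart of the matter; once they are in hand, the Doob-transform bookkeeping of step (ii) is routine given Theorem~\ref{thm:diffApprox}.
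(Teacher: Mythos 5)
This statement is explicitly a \emph{conjecture} in the paper: the authors themselves write just before stating it that they ``are not able to prove a `local-limit' statement that would be the analogue of Theorem~\ref{thm:diffApprox}'' under two-sided conditioning, and the text around Conjectures~\ref{conj:diffApprox2} and~\ref{conj:diffApprox3} is an invitation to the community, not a proof. So there is no paper proof to compare yours against, and your proposal is not a proof either — you say so yourself. What you have written is a reasonable \emph{strategy}, and to your credit you correctly locate the decisive missing ingredient: a quantitative local limit theorem with Gaussian-type two-sided bounds for the degenerate (hypoelliptic) chain $\bar\xi^n$, uniform down to the absorbing barrier and as $t\to t_1$. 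That is indeed the ``local-limit statement'' the authors say they lack, and your Doob $h$-transform reduction (realise both sides as $h$-transforms, feed the local CLT into the ratios $h^n_{k+1}/h^n_k$, Taylor-expand $\log p^\star_{t_1-t}$, re-run the tightness and martingale-problem identification of Theorem~\ref{thm:diffApprox}, then treat the endpoint separately) is the standard and sensible scaffolding once that ingredient is in hand. It is, however, the scaffolding, not the bridge.

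Two further cautions on the scaffolding itself, beyond the acknowledged gap. First, you appeal to Hörmander's condition to assert that $p^\star_t$ is a smooth positive density; the generator of \eqref{eq:hdgfygu} has drift $(0,\delta,m)$ and a single diffusion vector field $\sqrt{2m}\,\partial_\delta$, and while the brackets $[\partial_\delta,\,\delta\partial_m+m\partial_s]=\partial_m$ and $[\partial_m,\,\delta\partial_m+m\partial_s]=\partial_s$ do span, the coefficient $\sqrt{2m}$ is not smooth at $m=0$, so hypoellipticity and positivity of $p^\star$ on the relevant region need to be argued on the domain $\{m>`e\}$ with the absorption taken into account (note the paper only cites a personal communication of Fournier for absolute continuity of $m^\star_{t_1}$ on compacts of $(0,\infty)$ — a much weaker statement than a smooth jointly positive heat kernel). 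Second, the endpoint-tightness step (``bridge drift is integrable near $t_1$ and pulls the trajectory to $z_1$'') is asserted, not argued; for degenerate bridges, the blow-up rate of $\nabla_z\log p^\star_{t_1-t}(z,z_1)$ as $t\to t_1$ is anisotropic across the three coordinates (because the small-time heat kernel has different scaling in $\delta$, $m$, $s$), and the usual ``one-dimensional'' integrability argument does not transfer without a genuine small-time heat kernel expansion. In short: your diagnosis of what is missing matches the authors' own, your reduction is the natural one, but the local limit theorem, the heat kernel estimates it requires, and the endpoint control remain open, so this is correctly presented as a strategy for a conjecture rather than a proof.
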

In the last sentence, notice that we can condition the process $\xi^{\star}$ by its terminal value $\xi^{\star}_{t_1}$ on $[0,t_1]$, on the support of this variable -- and this characterizes (by disintegration), up to a negligible set, the conditional distribution given the ending point. It can be shown (personal communication of Nicolas Fournier \cite{NF}), that $m^\star_{t_1}$ is absolutely continuous with respect to the Lebesgue measure on any compact subset of  $(0,+\infty)$.

The last conjecture seems to be related to ''continuity'' properties of the law of the process $\zeta^{n,*}$ on an interval $[k_1,k_2]$ and conditioned to its right boundary, according to the value on that right boundary. Although it seems difficult to obtain such a result in the general framework of approximating discrete Markov processes by diffusions, it might be doable for this particular case.

From the viewpoint of the convergence of $\zeta^{n}$, and given Proposition~\ref{prop:discreteDiff}, it would we even better to establish the following:
\begin{conjecture}\label{conj:diffApprox3}
Conjecture~\ref{conj:diffApprox2} also holds with $`e=0$.
\end{conjecture}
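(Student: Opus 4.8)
The plan is to prove the statement in two stages: first establish Conjecture~\ref{conj:diffApprox2} (the two-sided convergence with a strictly positive stucking threshold $`e$), then let $`e\downarrow0$. For the first stage I would recast the two-endpoint conditioning as a Doob $h$-transform. The diffusion $\xi^\star$ of Theorem~\ref{thm:diffApprox}, stuck at level $`e$, has a transition density $p^{[`e]}_t(z,z')$ which is smooth away from the absorbing set $\{m=`e\}$: this follows from the parabolic version of H\"ormander's hypoellipticity theorem, whose bracket condition is readily checked to hold wherever $m>0$ (the noise vector field $\sigma$ points along the $\delta$-axis, and iterated Lie brackets with the drift $f$ recover first the $m$- and then the $s$-direction). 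The conditioned discrete chain is then exactly the Markov chain obtained from $(\bar\xi^n)$ by the space--time $h$-transform with $h^n(t^n_k,\cdot)$ the $(N_n-k)$-step transition density of $(\bar\xi^n)$ at $z_1^n$, and the limiting conditioned process is the $h$-transform of $\xi^\star$ with $h(t,z)=p^{[`e]}_{t_1-t}(z,z_1)$. Hence Conjecture~\ref{conj:diffApprox2} reduces to Theorem~\ref{thm:diffApprox} together with a \emph{local} limit theorem: the $k$-step densities of $(\bar\xi^n)$ converge, locally uniformly on $\{m\ge`e'\}$ for every $`e'>`e$, to $p^{[`e]}$, with Gaussian-type upper bounds strong enough to transfer the $h$-transform and the endpoint conditioning to the limit (tightness of the conditioned chains plus convergence of their finite-dimensional conditional laws).

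That local limit theorem is the technical core of the first stage, and I would prove it by analysing the one-step map \eqref{eq:DMS} directly: on $\{m_i\ge`e n^{3/4}\}$ the centred sum $\sum_{k=1}^{|m_i|}(g^{(k)}-2)$ of i.i.d.\ geometric variables has, by the classical lattice local CLT, a one-dimensional density close after rescaling to the Gaussian of variance $2m_i$; composing three consecutive steps turns this single source of noise into a genuinely three-dimensional smooth density (the discrete counterpart of the H\"ormander computation above), and iterating over the $O(n^{1/4})$ steps of a macroscopic interval with standard perturbative/parametrix bookkeeping should give the claimed convergence and uniformity. A less self-contained alternative is to combine Euler-scheme convergence in law with Malliavin-calculus density bounds for the hypoelliptic limit on regions where H\"ormander's condition is non-degenerate.

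For the second stage, passing from $`e>0$ to $`e=0$, I would run a double approximation. On the continuum side: the un-stuck bridge of \eqref{eq:hdgfygu} between strictly positive endpoints $m_0,m_1$ a.s.\ never reaches $0$ on $[0,t_1]$ --- intuitively because absorption at $m=0$ is irreversible and therefore incompatible with the positive terminal value $m_1$, an argument that can be made precise by disintegration together with the absolute continuity of $m^\star_{t_1}$ on compacts of $(0,\infty)$ from \cite{NF}, or by a comparison with an explicitly solvable minorant (when $m^\star$ is small, $\delta^\star$ has quadratic variation at most $2`e t_1$ and is nearly frozen, so it cannot drive $m^\star$ all the way to $0$). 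Consequently the family of stuck-at-$`e$ bridges is coherent and converges, as $`e\downarrow0$, to that un-stuck bridge. On the discrete side the one missing ingredient is a uniform ``narrow-neck'' estimate, namely $\sup_n\P_{\ge0}\big(\inf_{[0,t_1]}m^n\le`e\mid\text{endpoints}\big)\to0$ as $`e\to0$; here I would not argue softly but use the explicit product formula \eqref{eq:pro}: conditioning on $M_j=O(`e n^{3/4})$ at an interior site factorizes the enumeration into two almost independent halves, and the asymptotics of the resulting ratio of products of $\BNEG$-weights --- precisely the quantities appearing after the rewriting \eqref{eq:rep1} --- should be bounded by a positive power of $`e$, uniformly in $n$ and in the rescaled boundary data. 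Combining the two sides with Conjecture~\ref{conj:diffApprox2} and a diagonal argument in $`e$ yields Conjecture~\ref{conj:diffApprox3}.

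The main obstacle, I expect, is the local limit theorem of the first stage: a density-level limit theorem, with the uniform Gaussian bounds needed to carry an $h$-transform, for a discrete chain approximating a \emph{degenerate}, \emph{non-Lipschitz} diffusion. H\"ormander yields smooth limiting densities and Euler schemes are known to converge in law, but quantitative local limit theorems of this type are far more delicate than in the elliptic case, and because of the square-root degeneracy at $m=0$ the constants deteriorate as $`e\to0$ and must be tracked carefully to feed the second stage. The narrow-neck estimate is a secondary obstacle: although \eqref{eq:pro} is completely explicit, extracting a clean uniform bound from a ratio of products of $\BNEG$-weights is exactly the analytic-combinatorics-in-several-variables difficulty already flagged around Conjecture~\ref{conj:main}. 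Finally, I note a shorter but conditional route: by Proposition~\ref{prop:discreteDiff} the conditioned companion process is a rescaled window of $\zeta^n$ conditioned at two points, Theorem~\ref{thm:tightness} gives $\zeta^n\to\zeta$, and, if Conjecture~\ref{conj:main} were known, a conditional-convergence lemma using the absolute continuity of the relevant two-dimensional marginal of $\zeta$ would identify the limit as the two-sided bridge of that SDE; since Conjecture~\ref{conj:main} is itself open, the direct route above seems the honest one.
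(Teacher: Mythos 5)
The statement you are asked about is not proved in the paper at all: it is Conjecture~\ref{conj:diffApprox3}, which the authors explicitly leave open (they state that they ``are not able to prove a local-limit statement'' of this kind and hope experts in diffusion approximation will bridge the gap). Your text is therefore correctly framed as a proposal, but it should be assessed as such: it is a research roadmap whose pivotal steps are precisely the open problems, not a proof. Stage one (the two-endpoint conditioning via a Doob $h$-transform) is only as good as the local limit theorem you defer to it: convergence of the $k$-step kernels of $\bar\xi^n$ to the density of the stuck hypoelliptic diffusion, locally uniformly and with Gaussian-type bounds strong enough to pass the $h$-transform to the limit, is exactly the ``continuity of the conditional law in the right boundary value'' difficulty the paper flags after Conjecture~\ref{conj:diffApprox2}; the H\"ormander bracket computation and the one-step lattice CLT for $\sum(g^{(k)}-2)$ are the easy parts, and the parametrix iteration over $O(n^{1/4})$ steps with constants uniform down to the degenerate region is asserted (``should give''), not carried out. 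So stage one does not reduce the problem, it restates it.

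Stage two contains a step that, as written, would fail. Your heuristic that the un-stuck bridge of \eqref{eq:hdgfygu} cannot reach $m=0$ because ``when $m^\star$ is small, $\delta^\star$ is nearly frozen, so it cannot drive $m^\star$ all the way to $0$'' is not sound: if $\delta^\star$ is frozen at a strictly negative value, $m^\star$ decreases \emph{linearly} to $0$ in finite time --- the freezing of the noise is no obstruction, and indeed at the edge of the ISE support $f_{ISE}$ does hit $0$ with $f'_{ISE}\to 0$. More fundamentally, ``the un-stuck bridge'' is not yet a well-defined object: the diffusion coefficient $\sqrt{2|m|}$ is non-Lipschitz at $m=0$, existence/uniqueness (and the very meaning of conditioning) are only established in Theorem~\ref{thm:diffApprox} away from $0$, which is why the paper stucks the process at level $\varepsilon>0$ in the first place; your argument presupposes a well-posed limit at $\varepsilon=0$, which is part of what Conjecture~\ref{conj:diffApprox3} asks for. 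Finally, the discrete ``narrow-neck'' bound --- that the conditioned probability of $\inf m^n\le\varepsilon n^{3/4}$ is bounded by a power of $\varepsilon$ uniformly in $n$ and in the boundary data --- is again only claimed (``should be bounded''); extracting it from \eqref{eq:pro} runs into the prefactors and the global positivity constraint that the paper identifies as the reason the two-boundary analysis is hard. In short: the architecture (local limit theorem $+$ $h$-transform, then $\varepsilon\downarrow 0$ with a uniform neck estimate and identification of the limiting bridge) is a reasonable way to attack the conjecture, but every load-bearing step is either open or, in the case of the $m\to 0$ argument, incorrect as stated, so this cannot be counted as a proof.
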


\medskip
 Note that Formula~\eref{eq:pro} is reminiscent of the closed formula for the distribution of ``horizontal profile'' that one may find for rooted plane trees, or rooted Cayley trees, taken under the uniform distribution on the corresponding sets of trees with $n$ vertices. The horizontal profile, in general just called ``profile'' in the literature, is the sequence $(z_i,i\geq 0)$ of the number of vertices at successive levels in the tree. If $h$ is a positive integer, the number of rooted plane tree having $z_i>0$ vertices at level $i$, for $1\leq i \leq h$ is given by $\prod_{i=0}^{h-1} \binom{z_i+z_{i+1}-1}{z_{i}-1}$, where $z_0=1$: indeed, $(c_1,\dots, c_{z_i})$ the number of children of the $z_i$ individuals at level $i$ forms a composition of $z_{i+1}$ (this is well known, see e.g.~\cite{BM-C} again).
The horizontal profile in these trees converges after space and time normalization to the local time of the Brownian excursion, which is the solution of a stochastic differential equation, as proven by Pitman \cite{jp97sde} (see also Drmota \& Gittenberger \cite{D-G}). As a starting point to approach the questions above, one might try to reprove this differential equation directly from the discrete product formula, via a diffusion approximation and conditioning of the natural companion Markov process.

\subsubsection{Family of Distributions subject to Boundary Conditions (FDBC)}

Finally, this discussion raises several questions about the characterization of a random continuum process by its law on proper compact subintervals of the support. Assume for the sake of the discussion, that a one dimensional process $X$ has a continuous version on  $[0,1]$. Consider the \emph{family of distributions subject to boundary conditions} (FDBC) of $X$, which is the data of all the laws
$${\cal L}\big( [X(t), t\in[t_1,t_2]]~|~X(t_1)=x_1,X(t_2)=x_2\big)$$ for all $0<t_1<t_2 <1$ (observe that 0 and 1 are excluded) and all $(x_1,x_2)$ in the support of $(X(t_1),X(t_2))$. We observe that, in general, the FDBC is \emph{not} sufficient to characterize the distribution of~$X$. For example, the Brownian motion and the standard Brownian bridge have the same FDBC. Even the knowledge of the FDBC and the knowledge of the boundary distribution (for example $X(0)=X(1)=0$) is not sufficient to characterize $X$: for example, if 
$$X^{(p)}= -{\sf Ber}(p)\se+(1- {\sf Ber}(p))\se$$ where $\se$ is a normalised Brownian excursion, and ${\sf Ber}(p)$ an independant Bernoulli random variable with paremeter $p$, then  $X^{(p)}_0=X^{(p)}_1=0$ for all $p$, the $X^{(p)}$ have same FDBC for all $p\in(0,1)$, but the law of $X^{(p)}$ clearly depends on $p$.

What can be shown to be sufficient to determine the distribution of $X$ is, in addition to $X(0)=X(1)=0$ and of the knowledge of the FDBC, the continuity\footnote{ A notion of continuity which is sufficient here, is the convergence of finite dimensional distributions under ${\cal L}\big( [X(t), t\in[t_1,t_2]]~|~X(t_1)=x_1,X(t_2)=x_2\big)$ when $(t_1,t_2,x_1,x_2)\to (0,1,0,0)$.} of the map $(t_1,t_2,x_1,x_2)\mapsto {\cal L}([X(t), t\in[t_1,t_2]]~|~(X(t_1)=x_1,X(t_2)=x_2)$ in $(0,1,0,0)$.

The weakness of the FDBC to characterize the distribution of processes implies that the convergence of the FDBC is also a much too coarse tool to entail convergence in distribution. That being said, of course, such a convergence still carries an important amount of information and proving it in the case studied in this paper (Conjecture~\ref{conj:diffApprox2} or~\ref{conj:diffApprox3}) would be very interesting.

\section{Proof of tightness}
\label{sec:proofTight}

In this section we prove Theorem~\ref{thm:tightness}.
First note that by the results in~\cite{BM-J} (see Prop. \ref{prop:BMJ}), the tightness of the sequence of processes
$ \left( n^{-3/4} M_{n^{1/4} t}^{n} 
 \right)_{t\in \mathbb{R}}  
$ in $C_K(\R,\R)$
and its convergence to $f_{ISE}$ are known. So to prove tightness of $\zeta^n$, it suffices to prove the tightness of its first component, namely the sequence of processes
$ \left( n^{-1/2} \Delta_{n^{1/4}t}^{n}   \right)_{t\in \mathbb{R}} $.
Moreover, assuming this is done, pick any sub-sequential limit $g$ of that process (in distribution) and a probability space on which this convergence is realized almost surely, then it follows from the discrete identity
$
M^n_k = \sum_{j\leq k} \Delta^n_j
$
that, almost surely, we have $\int_{-\infty}^t g(s)ds = f_{ISE}(t)$. Therefore $g=(f_{ISE})'$  which shows that the convergence holds without taking subsequences -- and that the limit is, as claimed, the derivative of $f_{ISE}$.

Therefore we only have to prove the tightness of $ \left( n^{-1/2} \Delta_{n^{1/4}.}^{n}   \right) $.
For this, we will rest on the well known moment criterion of~Billingsley~\cite[Theo. 12.3]{BIL68}\footnote{Note that we reference to first edition of Billingsley's book here.} , whose main point is to prove the following lemma. In fact, any even value of $p\geq 4$ is sufficient for the moment criterion, but the proof is the same for arbitrary $p$ so we state it in this form. 
\begin{lemma}\label{lemma:momentBound}
	Let $p\geq 1$.
	Let $a\leq b \in \mathbb{Z}$ such that $\left|\frac{b-a}{n^{1/4}} \right|\leq 1$.
Then one has
	\begin{align}\label{eq:mainMomentBound}
	\left|	\mathbf{E}\left[\left(\frac{\Delta^n_b-\Delta^n_a}{\sqrt{n}}\right)^{p}\right] 
	\right|
	\leq C_p \left(\frac{b-a}{n^{1/4}}\right)^{ \lceil p/2 \rceil},
\end{align}
for some constant $C_p$ depending only on $p$.
\end{lemma}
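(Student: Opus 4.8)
We need to prove moment bounds for the increments of the discrete derivative process $\Delta^n$ of the vertical profile of a uniform binary tree. The key resource is the exact product formula (Proposition~\ref{pro:comb-delta-M}) for the law of the vertical profile $M(\bT_n)$. The structure of that formula — a product over levels $i$ of negative-binomial-type factors — strongly suggests rewriting things in terms of the companion Markov chain $Z^\star$ (Proposition~\ref{prop:discreteDiff}), where conditionally on the profile up to level $i$, the increment $\Delta_{i+1}$ looks like $\Delta_i + \sum_{k=1}^{M_i}(g^{(k)}-2)$ with $g^{(k)}$ i.i.d. geometric$(1/2)$. This is the random-walk-with-random-clock structure one expects to drive a $\tfrac12$-Hölder derivative.

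**Plan for the proof.**

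The plan is to express $\Delta^n_b - \Delta^n_a$ as a sum of "local" increments and control the $p$-th moment by conditioning. Concretely, I would work at the level of the binary tree directly (or via Proposition~\ref{prop:discreteDiff} on an interval avoiding $0$; but since $[a,b]$ may straddle $0$, it is cleaner to argue on the tree). For levels $a < i \le b$, write $\Delta_{i} - \Delta_{i-1} = M_i - 2M_{i-1} + M_{i-2}$, the discrete Laplacian of the profile. Then $\Delta^n_b - \Delta^n_a = \sum_{i=a+1}^{b} (\Delta_i - \Delta_{i-1})$, and after the $n^{-1/2}$ scaling we must bound the $p$-th moment of a sum of $\le (b-a)$ terms, each of order $M_i \asymp n^{3/4}$, i.e. of order $n^{3/4}$ after raw scaling; the $n^{-1/2}$ normalization then makes each term of order $n^{1/4}$, and we are claiming the sum of $\le (b-a) \le n^{1/4}$ such terms has $p$-th moment of order $((b-a)/n^{1/4})^{\lceil p/2\rceil}$. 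This is exactly a square-root cancellation statement: the summands $\Delta_i - \Delta_{i-1}$ behave like centered increments of a martingale-type object, and the sum of $N := b-a$ of them has typical size $\sqrt{N}\cdot n^{1/4}$.

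The technical heart is therefore: (1) Use the product formula to identify, conditionally on $M_{i-1}$ (and $M_{i-2}$, or equivalently on $\Delta_{i-1}$), the conditional law of $\Delta_i - \Delta_{i-1} = \sum_{k=1}^{M_{i-1}}(g^{(k)}-2)$ plus a correction term coming from the prefactor in~\eqref{eq:rep1} (the factor $m_0\binom{m_{-1}+m_1}{m_0-1}/(m_\ell m_r)$ that spoils exact Markovianity). Since this prefactor is a bounded-degree rational/binomial perturbation, it should only cost a multiplicative constant in the relevant moment estimates; I would isolate it and bound it by Cauchy–Schwarz against the Markovian part. (2) For the Markovian part, $\sum_{k=1}^{m}(g^{(k)}-2)$ is a centered sum of i.i.d. variables with finite exponential moments, so by Rosenthal/Marcinkiewicz–Zygmund its $p$-th moment is $O(m^{p/2} + m) = O(m^{\lceil p/2\rceil})$ when $m \ge 1$; with $m = M_{i-1} = O(n^{3/4})$ this gives each summand a conditional $p$-th moment $O(n^{3p/8})$. (3) Sum over $i$: treating $(\Delta_i - \Delta_{i-1})$ as (approximately) a martingale difference sequence with respect to the filtration generated by $(M_j)_{j \le i-1}$ — the conditional mean being $0$ up to the prefactor correction — apply a Rosenthal-type inequality for martingales to get $\mathbf{E}[(\sum_{i=a+1}^b (\Delta_i - \Delta_{i-1}))^p] \le C_p\big( (\sum_i \mathbf{E}[(\Delta_i-\Delta_{i-1})^2])^{p/2} + \sum_i \mathbf{E}[(\Delta_i - \Delta_{i-1})^p]\big)$. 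Using $\mathbf{E}[(\Delta_i - \Delta_{i-1})^2 \mid \mathcal{F}_{i-1}] \asymp M_{i-1} = O(n^{3/4})$ and $N = b-a \le n^{1/4}$, the first term is $O((N n^{3/4})^{p/2}) = O(N^{p/2} n^{3p/8})$ and the second $O(N n^{3p/8 \cdot ?})$ — one checks the first dominates in the regime $N \le n^{1/4}$ and $p \ge 2$, giving $O(N^{\lceil p/2 \rceil} n^{3p/8})$ after being a bit careful with odd $p$ (whence the ceiling). Dividing by $n^{p/2}$: the bound becomes $O((N/n^{1/4})^{\lceil p/2\rceil})$ up to adjusting, exactly~\eqref{eq:mainMomentBound}. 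One also needs the a priori bound $M_i = O(n^{3/4})$ with enough uniformity in the moments used — this follows from the known tightness of $n^{-3/4}M_{n^{1/4}\cdot}$ in $C_K$ from Proposition~\ref{prop:BMJ}, or can be re-derived from the product formula, and I would make the "$M_i = O(n^{3/4})$" statement quantitative by a moment bound on $\sup_i M_i$.

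**Main obstacle.** The principal difficulty is the prefactor in~\eqref{eq:rep1}: the law of $M(\bT_n)$ is \emph{not} exactly that of the Markov chain $Z^\star$, it is that chain tilted by $m_0\binom{m_{-1}+m_1}{m_0-1}/(m_\ell m_r 2^{m_\ell + m_r})$ and conditioned to positivity on $\cro{\ell,r}$ and to $\sum m_i = n$. Controlling moments under this tilted/conditioned measure — in particular justifying that the conditional-mean-zero (martingale) structure survives up to errors that are negligible at the level of the moment bound — is the delicate point. I expect to handle it either by (a) absolute continuity: bound the Radon–Nikodym derivative of the true law with respect to the companion-chain law on a high-probability event (where $M_\ell, M_r, M_0$ are all of order $n^{3/4}$ and $\ell, r$ of order $n^{1/4}$), so that the prefactor is bounded above and below by constants, reducing everything to the clean Markovian computation on $Z^\star$; or (b) working entirely with $Z^\star$ on sub-intervals avoiding $0$ via Proposition~\ref{prop:discreteDiff}, handling the (at most one) interval straddling $0$ separately by splitting at level $0$. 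Either way, the conditioning $\sum m_i = n$ is mild (a single linear constraint, harmless for $p$-th moments of local increments by a standard local-CLT / absolute-continuity argument), but the positivity conditioning near the boundary $\ell, r$ is where care is genuinely needed; fortunately the increments we bound live at levels $a < i \le b$ with $|b-a| \le n^{1/4}$, which are "in the bulk" relative to the $O(n^{1/4})$-wide support only if $[a,b]$ is not adjacent to $\ell$ or $r$, and a uniform bound must be made to hold regardless — this is the part requiring the most bookkeeping.
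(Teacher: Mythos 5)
Your route is genuinely different from the paper's (which never touches martingale arguments: it expands the $p$-th power into a signed sum of counts of trees with $p$ marked vertices, groups them by skeleton/scheme, computes branch generating functions, and extracts coefficients by Hankel-contour analysis, the exponent $\lceil p/2\rceil$ emerging from the fact that the dominant schemes attach the marked vertices in pairs). The problem is that the step you yourself flag as the ``main obstacle'' is not a technicality but the crux, and neither of your proposed fixes resolves it. The quasi-martingale structure $\Delta_{i+1}-\Delta_i=\sum_{k\le M_i}(g^{(k)}-2)$ holds only for the companion chain $Z^\star$; under $\U_n$ the profile is that chain conditioned to stay positive on $\cro{\ell,r}$, to vanish at both ends, and to have total mass exactly $n$, plus the prefactor of \eqref{eq:rep1}. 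The Radon--Nikodym derivative of the true law restricted to the window $[a,b]$ with respect to the chain law is \emph{not} the explicit prefactor: it is a ratio of conditioned continuation probabilities (an $h$-transform), and controlling it uniformly -- equivalently, controlling the induced conditional drift of the increments in the bulk -- is precisely the two-boundary ``local limit'' statement that the paper explicitly cannot prove and records as Conjecture~\ref{conj:diffApprox2}. Your fix (a) also rests on a false premise ($M_\ell$ and $M_r$ are the profile values at the edge of the support, hence small, not of order $n^{3/4}$, and a bound valid only on a high-probability event does not control $p$-th moments without a bound on the complement), and fix (b) reintroduces the same two-boundary conditioning through the boundary values of Proposition~\ref{prop:discreteDiff}, which are distributed according to the very law you are trying to analyse. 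The conditioning $\sum_i m_i=n$ is likewise not ``mild'': it is a constraint on an integrated functional of the chain, for which no off-the-shelf local CLT is available.

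Even granting the martingale approximation, your argument does not deliver the stated exponent for odd $p$: Rosenthal/Marcinkiewicz--Zygmund bounds control $\mathbf{E}|{\cdot}|^p$ and give only $\mu^{p/2}$, whereas \eqref{eq:mainMomentBound} asserts $\mu^{\lceil p/2\rceil}=\mu^{(p+1)/2}$ for odd $p$ (and this sharper odd-moment cancellation is what Proposition~\ref{prop:bonus} uses). Extracting the extra $\sqrt{\mu}$ requires showing that the signed odd moments almost vanish, i.e.\ that the conditional drift per step under $\U_n$ is $O(n^{1/4})$ uniformly in the window -- which is again exactly the conditioning control you have not supplied. In the paper this cancellation is obtained for free from the signed tree counts: the exponent counting in Lemma~\ref{lemma"exponentCounting} (the terms $\mu^{\chi/2}$ and $n^{(n_2^{e,un}-\delta)/8}$) is what produces the ceiling. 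So the proposal, as it stands, has a genuine gap at its central step, and the gap coincides with an open problem stated in the paper.
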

The constant $1$ in the modulus upper-bound plays no role in the proof of the lemma nor in its application (any positive constant would work).

	\subsection{Proof of Lemma~\ref{lemma:momentBound}}
	Let us first sketch the method, which is relatively straightforward even if it takes space to write: we will expand the $p$-th power, interpret the quantities obtained as counting (with signs) trees with $p$ marked vertices, and proceed with generating functions. We will compute these generating functions combining a skeleton decomposition (whose origin in the context of labelled trees might be traced back to~\cite{CMS}) with analytic combinatorics via Hankel contours. The main point of the proof is that, after the proper analysis is done and sign cancellations analysed carefully, we can identify the dominating configurations as the ones in which the $p$ marked vertices are grouped together ''in $\lfloor \frac{p}{2}\rfloor$ pairs'',
	which \emph{in fine} is the explanation for the ratio between exponents $p$ and $\frac{p}{2}$ in \eqref{eq:mainMomentBound}. This appearance of pairs in the analysis is in some sense the combinatorial incarnation of the Gaussian nature of the underlying continuum process, see also Section~\ref{sec:gaussianPairs}. Let us now proceed with the proof. 

\medskip

	Let $q=b-a$ and assume $q\geq 1$. Write $q=\mu n^{1/4}$. Since the Catalan numbers satisfy $Cat(n) = \Theta(4^n n^{-3/2})$, we need to show that 
	\begin{align}\label{eq:toProve}
	|	Cat(n)\times \mathbf{E}\left[\left(\Delta^n_{b+1}-\Delta^n_{a+1}\right)^{p}\right] |\leq C_p 4^n n^{\frac{p}{2}-\frac{3}{2}} \mu^{\lceil\frac{p}{2}\rceil},
	\end{align}
	for some constant\footnote{in the rest of the proof, the notation $C_p$ will denote a constant, that may vary from line to line, but that will always be dependent only on $p$ (and not on $b,a,q,\mu,n$).} $C_p$. Note that compared to~\eqref{eq:mainMomentBound} we have incremented $a$ and $b$ by one, which  will be convenient for notation.

	\subsubsection{Marked trees, skeletons}
	Since $\Delta^n_{b+1}-\Delta^n_{a+1}= M^n_{b+1}-M^n_b-M^n_{a+1}+M^n_a$, we can write
	\begin{align*}
		\mathbf{E}\left[\left(\Delta^n_{b+1}-\Delta^n_{a+1}\right)^{p}\right]
		= \sum_{`e_1,\dots,`e_{p} \in \{0,1\}\atop `e'_1, \dots, `e'_{p} \in \{0,1\}} (-1)^{\sum `e_i+\sum`e'_i} 
		E\left[\prod_{i=1}^{p} M^n_{a+q`e_i+`e'_i}\right]
\end{align*}
so 
	\begin{align}\label{eq:bigsum}
		Cat(n) \times 
		\mathbf{E}\left[\left(\Delta^n_{b+1}-\Delta^n_{a+1}\right)^{p}\right]
		= \sum_{`e_1,\dots,`e_{p} \in \{0,1\}\atop `e'_1, \dots, `e'_{p} \in \{0,1\}} (-1)^{\sum `e_i+\sum`e'_i} 
	T_n(a+q`e_i+`e'_i,i=1\dots p),
\end{align}
	where $T_n(i_1,\dots,i_{p})$ denotes the number of binary trees of size $n$ having $p$ (numbered, possibly repeated) distinguished vertices, of respective abscissa $i_1,\dots,i_{p}$.

	We will evaluate the sum~\eqref{eq:bigsum} by grouping trees with $p$ marked vertices according to their skeleton and their scheme, which we now define.	
\begin{figure}
	\begin{center}
		\includegraphics[height=50mm]{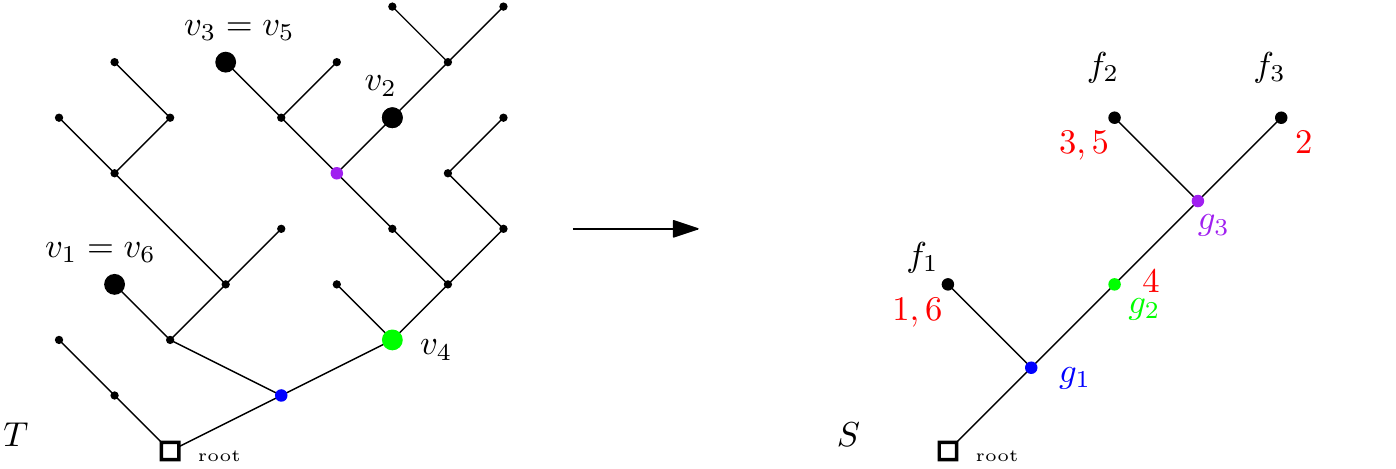}
		\caption{Left: a binary tree $T$ with six marked vertices. Right: Its skeleton $S$. To help vizualize which vertices of $T$ are present in $S$, we represented them with colors. The decorations $1,2,\dots,6$ indicating the position of the vertices $v_i$  are in red.}
		\label{fig:skeleton}
	\end{center}
\end{figure}
\begin{definition}[skeleton, scheme, Figure~\ref{fig:skeleton}]\label{def:skeleton}
		Let $T$ be a rooted binary tree with root $\rho$, with $p$ marked vertices $v_1,\dots,v_{p}$, numbered and possibly repeated. 
		 
		Let $V$ be the set formed by the vertices $v_1\dots, v_{p}, \rho$ together with all their pairwise highest common ancestors.
		The \emph{skeleton} $S$ of $T$ is the rooted binary tree on the vertex set $V$ obtained from $T$ by iteratively removing all vertices which are leaves but are not in $V$, until no such leaf remains, and then replacing each path of vertices of degree $2$ joining two points of $V$ (but containing no other vertex of $V$) by a single edge. 
				We preserve the left/right order of edges outgoing from vertices of $V$, so $S$ has a structure of binary tree.  
We write (without repetition)
	$$V = \{f_1,\dots,f_k,g_1,\dots,g_\ell, \rho\},$$ 
		where $f_1,\dots,f_k$ are the leaves (vertices without children) of $S$, and $g_1,\dots g_\ell$ are either unary or binary vertices ($1$ or $2$ children).
			We number the $f_i$ and the $g_i$ in the natural depth-first order of the tree. Some of these vertices are also naturally decorated by one or several integers from $1$ to $p$, which record the position of the vertices $v_i$ in  $T$ (this decoration is part of data of the skeleton, see Figure~\ref{fig:skeleton} again).
				Note that a vertex $f_i$  necessarily carries a decoration and the same is true for a unary vertex $g_i$, but a binary vertex $g_i$ can be decorated or not.
	
	The \emph{labelled skeleton} of the marked tree $T$ is the pair $\hat S=(S,x)$, where $S$ is the skeleton and  $x=(x_1,\cdots,x_\ell)$ is the sequence of abscissas $x_1,\dots,x_\ell$ of $g_1,\dots,g_\ell$ in $T$.

		The \emph{scheme} of $T$ is the pair $\tilde S=(S,\lambda)$ where $S$ is the skeleton of $T$ and $\lambda=(\lambda(1), \dots, \lambda(\ell))\in\{0,1,2\}^\ell$, where for each $i\in [\ell]$, we have $x_i\in I_{\lambda(i)}$ where $I_0 \cup I_1 \cup I_2$ is the following partition of $\mathbb{Z}$:
\begin{align}\label{eq:split}
	\mathbb{Z} =
	\underbrace{(\infty,a-1] \cup [b+2,\infty)}_{I_0}
	\cup
	\underbrace{[a+2,b-1]}_{I_1}
	\cup
	\underbrace{\{a,a+1,b,b+1\}}_{I_2}.
\end{align}

	The number of non-root inner vertices whose abcissa belongs to each of these three sets will play a key role in what follows. We let  
		$$
		n_0 = |\lambda^{-1}(\{0\})| \ \ , \  \  
		n_1 = |\lambda^{-1}(\{1\})| \ \ , \ \  
		n_2 = |\lambda^{-1}(\{2\})|. 
		$$
	
		Note that the scheme $\tilde S$ can be infered from the labelled skeleton $\hat S$. We say that $\hat S$ is \emph{compatible with $\tilde S$} if trees of labelled scheme $\hat S$ have scheme $\tilde S$.
	\end{definition}

	\begin{remark}\label{rem:relabelings}
		There are finitely many binary trees having at most $p$ leaves and unary vertices. Moreover, there is a finite number of ways to distribute the decorations $1,\dots,p$ of $v_1,\dots,v_{p}$ among the vertices of $S$. So there is a finite number of skeletons -- and a finite number of schemes.
\end{remark}

	In the sum~\eqref{eq:bigsum}, we will group the configurations having the same labelled skeleton, and we will evaluate (upper bound) their contribution separately.

\subsubsection{Branches and their generating function}
\label{subsubsec:branches}

\begin{figure}
	\begin{center}
		\includegraphics[height=3.0cm]{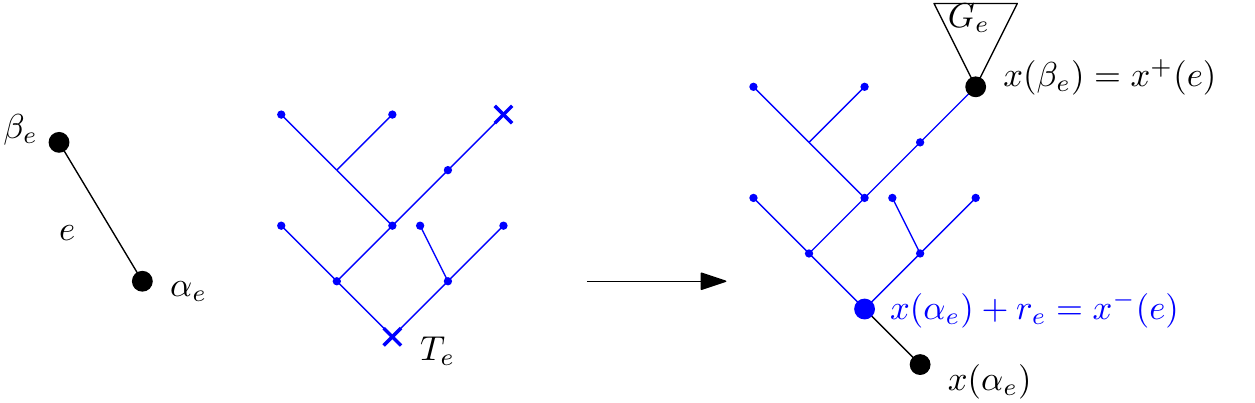}
		\caption{Replacing an edge $e$ of the labelled skeleton by a branch $T_e$. On this example $e$ is an left-leaning edge ($r_e=-1$) and the increment of the branch to be substituted is $x(\beta_e)-(x(\alpha_e)-1)=2$.}
		\label{fig:branch}
	\end{center}
\end{figure}

Fix a labelled skeleton $\hat S=(S,x)$, and use the notation of Definition~\ref{def:skeleton}.
We will explain how to construct all trees with labelled skeleton $\hat S$ by substituting edges with branches. A \emph{branch} is just a rooted binary tree with a marked leaf, considered up to additive translation of all abscissas. The \emph{increment} of the branch is the abscissa of its marked leaf (when the root is set to zero). The abscissas of vertices appearing along the path from the root to the mark leaf form a walk on $\mathbb{Z}$ with steps $\pm 1$.

Now, for each edge $e$ of $S$, let $\alpha_e \in V$ be the mother  vertex of $e$ in $S$, and let $\beta_e$ be the other endpoint of $e$.
Let $x(\alpha_e), x(\beta_e)$ be their abscissa in the labelled skeleton (for the moment, the second one is defined only if $\beta_e$ is not a leaf, since the labelled skeleton, by definition, does not record the abscissas of its leaves). 
Let $r_e$ be equal to $+1,-1$, respectively if $e$ is leaning right or left  from $\alpha_e$.

All the marked trees contributing to~\eqref{eq:bigsum} have $p$ marked vertices (possibly repeated) whose abscissa is among $\{a,a+1,b,b+1\}$. Among these trees, 
the trees $T$ having labelled skeleton $\hat S$ can be obtained in a unique way by the following procedure
(Figure~\ref{fig:branch}).
\begin{itemize}[itemsep=0pt, topsep=0pt,parsep=0pt, leftmargin=12pt]
	\item for each leaf $f$ of $S$, choose its abscissa $x(f)$ among $a,a+1,b,b+1$,
	\item  for each edge $e$ of $S$, let $x^-_{e}:=x(\alpha_e)+r_e$ and $x^+_e:=x(\beta_e)$. Replace the edge $e$ by a branch $T_e$ of increment $x^+(e)-x^-(e)$, whose root and marked leaf are respectively linked to $\alpha_e$ and identified with $\beta_e$,
	\item attach a rooted binary tree $G_e$ to each $\beta_e$ which is a leaf of $S$.
\end{itemize}

In order to evaluate the expression~\eqref{eq:bigsum}, we consider its generating function, in a new variable~$t$,
\begin{align}\label{eq:gf}
\sum_{n\geq 1}  \sum_{`e_1,\dots,`e_{p} \in \{0,1\}\atop `e'_1, \dots, `e'_{p} \in \{0,1\}} (-1)^{\sum `e_i+\sum`e'_i} 
	T_n(a+q`e_i+`e'_i,i=1\dots p) t^n.
\end{align}
Note that the coefficient of $t^n$ in~\eqref{eq:gf} is precisely~\eqref{eq:bigsum}.
We let $F_{\hat S}(t)$  the contribution of the set of marked trees $T$ having a given labelled scheme $\hat S$ to the generating function~\eqref{eq:gf}. This generating function can be evaluated by computing separately the contribution of each edge of $\hat S$, as we now show. In all generating functions below, the variable $t$ will mark the number of vertices of the underlying trees.

To start with, using classical last-passage decompositions, the generating function of branches of increment $i$ is found\footnote{Proof: the series $T(t)$ is clearly the generating function of binary trees by the number of vertices. By the bijection between trees and Dyck path, the series $U(y)$ is the generating function of Dyck paths extended by an extra up-step, and the series $B(y)=\sum_{n\geq0}{2n \choose n} y^{2n}$ is the generating function of Dyck bridges ($\pm1$ path going from $0$ to $0$). By decomposing a walk ending at $i\geq 0$ at the last passage at each integer $j\in [0,i]$, we decompose it into a bridge followed by $i$ translated Dyck paths separated by single up steps, which shows that the series $BU^i$ is the generating function of $\pm1$ walks ending at $i$ (note that the walk can be empty, which is not a problem for us).

This walk can be interpreted as a left/right path going from the root to a marked leaf of abscissa $i$.
Finally, the relation $y=t(T-1)$ means that a (non-empty) binary tree is to be substituted at each step of that path.} to be
\begin{align}\label{eq:Hi}
H_i(t) =  B U^{|i|},
\end{align}
where the series $U\equiv U(t)$ is defined by the system of equations
\begin{align}\label{eq:gfs}
\left\{\begin{array}{c} T=t(1+T)^2, \\ U=y(1+U)^2, \\ y=t(T-1),\end{array}\right.
\end{align}
and $B = (1-4y)^{-1/2}=\frac{1+U}{1-U}$.
The critical point of this system is readily found to be equal to $t=t_c:=\frac{1}{4}$, corresponding to $U=U_c := 1$. More precisely, the algebraic series $B(t)$ and $U(t)$ have a unique singularity of minimal modulus at $t_c$, close to which they admit the  Puiseux expansions
\begin{align}\label{eq:Puiseux}
	U = 1 - c (1-4t)^{1/4} + O((1-4t)^{1/2}) \ \ , \ \ 
	B = c' (1-4t)^{-1/4} + O(1) , 
\end{align}
with constants $c, c'>0$ that we do not need to make explicit.
In particular, there exists a  neighbourood $V$ of $\frac{1}{4}$ in $\mathbb{C}\setminus(\frac{1}{4},\infty)$ on which $|U(t)|$ has a unique maximum (equal to $1$) at $t=\frac{1}{4}$.

We can now  compute the generating function $F_{\hat S}(t)$. We have  
\begin{align}\label{eq:FShat}
	F_{\hat S}(t) = `e_{\hat S} \cdot t^\ell T^k \cdot 
	\prod_{e\in E(S)} W_e(t),
\end{align}
where $E(S)$ is the set of edges of the skeleton, and $W_e(t)$ is the generating function of the branches that can be substituted at edge $e$, discussed below. 
Here the factors $T^k$ and $t^\ell$ account respectively for the trees $G_e$ to be inserted at each leaf $\beta_e$, and for the internal vertices of $S$. The sign $`e_{\hat S}\in \{\pm 1\}$ is the contribution to the sign in~\eqref{eq:bigsum} of the decorated vertices which are one of the inner vertices $g_1,\dots,g_\ell$ of the skeleton\footnote{Namely, for each $i$ from $1$ to $p$, if the $i$-th decoration in $S$ is carried by the vertex $g_j$ of label $x_j$, then necessarily $x_j \in \{a,a+1,b,b+1\}$ and the multiplicative contribution of this index $j$ to $`e_{\hat S}$ is $+1$ if $x_j\in \{a,b+1\}$ and $-1$ otherwise. This sign will play no role in what follows as we will only estimate the modulus of $F_{\hat S}(t)$.}.

The function $W_e$ depends on the case considered:
\begin{itemize}
	\item ''internal'' edge: $\beta_e$ is not a leaf of $S$. Then the only constraint on the branch is its increment, which has to  equal $x^+(e)-x^-(e)$, therefore 
		\begin{align}\label{eq:WeInterne}
			W_e = H_{x^+_e-x^-_e} = BU^{|x^+_e-x^-_e|}.
		\end{align}

	\item ''external'' edge: $\beta_e$ is a leaf of $S$. We distinguish two cases:
	 \begin{itemize}
	 \item $\beta_e$ carries a unique decoration. If this is the case, in trees of scheme $S$, $\beta_e$ corresponds to a unique marked vertex $v_i$, and it has to be counted, in~\eqref{eq:bigsum}, with a sign that depends on the value of $x(\beta_e)$ in $\{a,a+1,b,b+1\}$.
		
		Summing over these four possible values, we get the contribution
		\begin{align}\label{eq:Wesum}
			W_e = \sum_{`e , `e' \in \{0,1\}} (-1)^{`e+`e'} 
			B U^ {|a+q`e+`e'-x^-_e|}.
		\end{align}
	
 To evaluate this expression we need to get rid of absolute values, which requires to know the relative position of $x:=x^-_e$ with numbers $a, a+1, b, b+1$.
		Note that  this information is present in the labelled skeleton. 
		The sum~\eqref{eq:Wesum} is immediately  computed in each case:
$$
\begin{array}{ll}
	case & W_e  
	\\  x\leq a & 
			B(1-U)U^{a-x}(1-U^q) 

	\\  x=a+1 & 
			B(U-1)(1+U^{q-1})  
	\\  x\in(a+1,b-1) &
			B(U-1)(U^{b-x}+U^{x-a-1})  
	\\  x=b & 
			B(U-1)(1+U^{q-1})  
	\\ x\geq b+1 & 
			B(1-U)U^{x-(b+1)}(1-U^{q}).
\end{array}
$$
		\vspace{-3cm}
		\begin{align}\label{eq:tableWeExterne}
		\end{align}
		\vspace{1cm}
		
 \item $\beta_e$ carries $h$ decorations with $h\geq 2$. We call the edge $e$ \emph{frozen}. If this is the case, in trees of scheme $S$, $\beta_e$ corresponds to $h$ coinciding marked vertices $v_i$, all contributing to the same sign. We thus get 
 	\begin{align}\label{eq:WesumF1}
			W_e = \sum_{`e , `e' \in \{0,1\}} \big((-1)^{`e+`e'} \big)^h
			B U^ {|a+q`e+`e'-x^-_e|}.
		\end{align}
  In this case we will only need to use the modulus upper bound
  \begin{align}\label{eq:WesumF2}
			|W_e| \leq \sum_{x \in \{a,a+1,b,b+1\}}|B| |U|^ {|x-x^-_e|}.
		\end{align}
		  Note that we could be more precise when $h$ is odd, but we will not need this.
   \end{itemize}

\end{itemize}

\subsubsection{Hankel contours}

\begin{figure}
	\begin{center}
		\includegraphics[height=3.5cm]{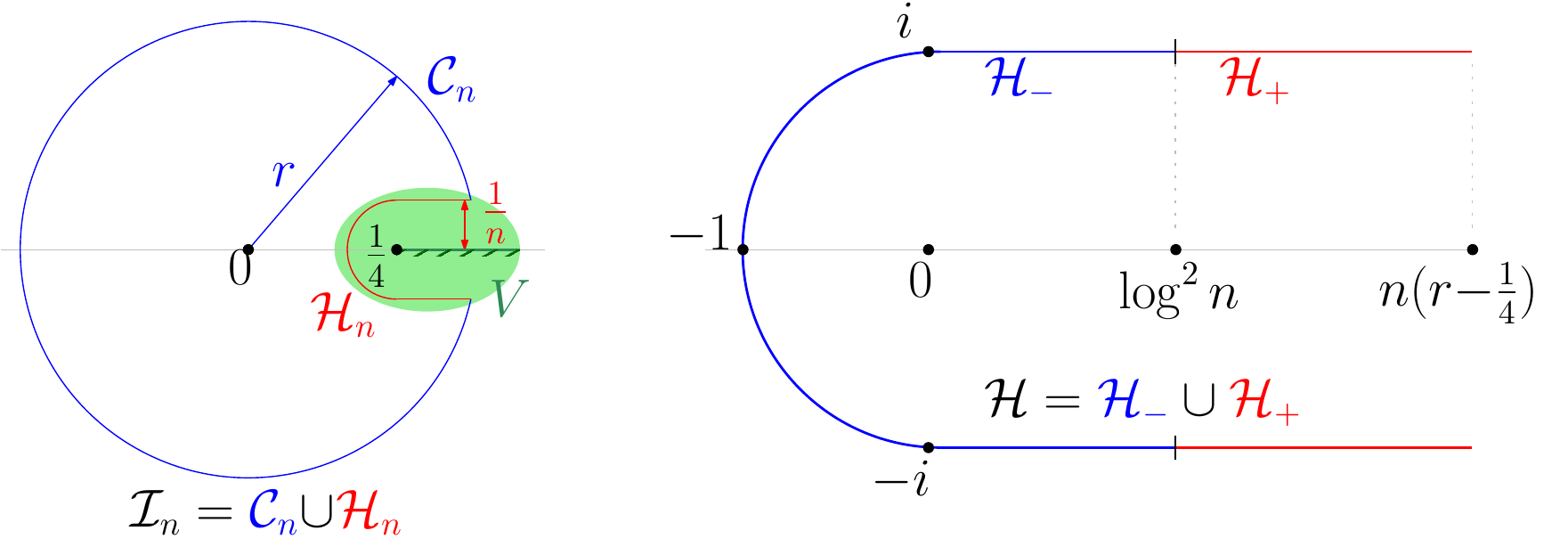}
		\caption{Contours of integration. Left: $t$-plane; Right: $\tau$-plane.}
		\label{fig:hankel}
	\end{center}
\end{figure}

For a scheme $\tilde S=(S,\lambda)$, we form the generating function
$$
F_{\tilde S} = \sum_{\hat S=(S,x)} F_{\hat S}
$$
where the sum is taken over all the labelled schemes $\hat S = (S,x)$ which are compatible with $\tilde S$.

The contribution of trees of scheme $\tilde S$ to~\eqref{eq:bigsum} is given by the coefficient:
\begin{align}\label{eq:Cauchy}
	[t^n] F_{\tilde S}(t) = \frac{1}{2\pi i}\oint_{\mathcal{I}_n} \frac{dt}{t^{n+1}} F_{\tilde S}(t),
\end{align}
		where $\mathcal{I}_n$ is the contour displayed on Figure~\ref{fig:hankel}-Left, which follows a circle or radius $r$ except close the positive real axis where it follows a small detour to the left of the singularity $1/4$ at distance $1/n$. Here $r>1/4$ is chosen so that $U(t)$ and $B(t)$ have no other singularity than $1/4$ inside the circle of radius $r$. We  split the countour $\mathcal{I}_n$ into the "circle part" $\mathcal{C}_n$ which is a portion of the circle of radius $r$, and the "Hankel part" $\mathcal{H}_n$.
We choose $r$ close enough to $\frac{1}{4}$ so that $\mathcal{H}_n$ is entirely contained in the neighbourhood $V$ previously chosen, and such that $U(t)<1$ for all $t\in \mathcal{I}_n$ (this is possible since by Pringsheim's theorem, we have $|U(t)|<1$ on the circle of radius $1/4$ from which $V$ is removed. By compactness and continuity, we can thus increase the radius a little bit and keep this inequality).

We note that $|B(1-U)|$ is bounded independently of $n$ on $\mathcal{C}_n$, and since $|U|\leq 1$, we have $|U^k|\leq 1$ for any $k=k(n)\geq 0$.
Hence, putting together~\eqref{eq:FShat},~\eqref{eq:WeInterne},~\eqref{eq:tableWeExterne},~\eqref{eq:WesumF2} we obtain that for $t \in \mathcal{C}_n$, 
		\begin{align}\label{eq:estimateFhat1}
			|F_{\hat S}(t)| \leq C_p \cdot  |B|^{m_{int}+m_1} \cdot  |U|^{\sum_{e \in F_{int}\cup F_{0}\cup F_{1}} |x_e^+-x_e^-|} \cdot  |1-U^q|^{m_{0} },
\end{align}
where
\begin{itemize}[itemsep=0pt, topsep=0pt,parsep=0pt, leftmargin=24pt]
	\item $F_{int}$ is the set of internal edges $e$ of $S$, $F_{0}$ is the set of non-frozen external edges such that $x^-_e \not\in[a+1,b]$, and $F_1$ is the set of frozen external edges;
	\item $m_{int}=|F_{int}|$ and $m_{0}=|F_{0}|$, $m_{1}=|F_{1}|$;
	\item for $e\in F_{0}$, we define $x^+_e$ to be equal to $a$ or to $b+1$ if $x^-_e<a$ or $x^-_e>b+1$, respectively;
	\item for $e\in F_{1}$, we fix arbitrarily $x^+_e\in\{a,a+1,b,b+1\}$ that minimizes $|x_e^+-x_e^-|$.
\end{itemize}

\medskip

We will now sum the estimate~\eqref{eq:estimateFhat1} over\footnote{Since we only work up to a constant multiplicative factor here, we will evaluate this sum approximately. An exact computation can be done following the lines of~\cite{CMS} but requires much heavier notation -- and this is ultimately not needed here.} all choices of abscissas $x_1,\dots,x_\ell$ in $\mathbb{Z}$ of the internal vertices of $S$ which are compatible with the scheme $\tilde S$.

In order to construct such a sequence $x_1,\dots,x_\ell$, we first choose a non-decreasing sequence
$$
y_1\leq \dots \leq y_{\ell+1}
$$
which \emph{in fine} will be the reordering of the sequence $0,x_1,\dots,x_\ell$. 

To construct a valid sequence $y_i$, we first choose which of the numbers $y_i$ are equal to zero (at least one is), and we choose which remaining indices will belong to the sets $I_0,I_1,I_2$. There are at most $C_p$ such choices. We then choose a value (among 4 possible) for each of the $n_2$ indices chosen to be in $I_2=\{a,a+1,b,b+1\}$, and there are again at most $C_p$ choices.
We then choose the values of each of the $n_1$ numbers inside $I_1=(a+2,b-1)$: we have at most $q^{n_1}$ choices. 

Finally we choose the $n_0$ remaining values,  assuming $n_0\geq 1$ (if $n_0=0$ there is no choice to make at this step). 

We call $y_{i_1}\leq \dots \leq y_{i_{n_0+1}}$ these $n_0$ numbers together with $0$, 
and we let $N:=y_{i_{n_0+1}}-y_{i_1}$. Assuming $N$ is fixed, there are at most $N^{n_0-1}$ choices for these numbers.
Moreover, applying the triangle inequality for edge increments along the path between internal vertices of labels $y_{i_{n_0+1}}$ and $y_{i_1}$ in $S$, we observe that 
$\sum_{e \in F_{int}} |x_e^+-x_e^-|\geq N$.

It remains to shuffle the sequence $y_1, \dots,\hat 0, \dots, y_{\ell+1}$ (one zero removed) to get the sequence $x_1\dots,x_\ell$. The number of shuffles is at most $C_p$.

From~\eqref{eq:estimateFhat1} we thus deduce (recall that $|U|\leq 1$ for $t\in \mathcal{C}_n$) that, for $t\in \mathcal{C}_n$, 
\begin{align}
	|F_{\tilde S}(t)| &\leq C_p
	\cdot  |B|^{m_{int}{+m_1}} \cdot  |1-U^q|^{m_{0}} \cdot q^{n_1}
	\sum_{N \in \mathbb{Z}}N^{n_0-1}  |U|^{N} \nonumber \\
	&\leq C_p |B|^{m_{int}{+m_1}} \cdot  |1-U^q|^{m_{0} } \cdot
	\mu^{n_1} n^{\frac{n_1}{4}}\frac{1}{(1-|U|)^{n_0}}   \nonumber \\
	&\leq C_p |B|^{m_{int}{+m_1}+n_0} \cdot  |1-U^q|^{m_{0} } \cdot
	\mu^{n_1} n^{\frac{n_1}{4}}\label{eq:last1},
\end{align}
if $n_0\geq 1$, and in fact the same bound holds if $n_0=0$ since the sum over $N$ in the first line is not present in that case.

\medskip 

With this estimate in hand we can now estimate the integral~\eqref{eq:Cauchy}. We start with the contribution of the Hankel part $\mathcal{H}_n$. 
We perform the change of variable 
$$
		t=\frac{1}{4}\left(1+\frac{\tau}{n}\right),
$$
		where $\tau$ now lives on the classical Hankel contour $\mathcal{H}$ represented on Figure~\ref{fig:hankel}-Right. We furthermore split $\mathcal{H}$ into $\mathcal{H}_-$ and $\mathcal{H}_+$, consisting of the parts for which $Re(\tau)\leq \log^2 n$ or $Re(\tau)>\log^2 n$, respectively\footnote{The contours we use and the way to split them are classical, see e.g. the proof of the transfer theorems in~\cite{Flajolet}.}.

		\medskip 
We first look at the contribution of $\mathcal{H}_-$. We have from~\eqref{eq:Puiseux}, expressed in the $\tau$ variable:
\begin{align}\label{eq:PuiseuxTau}
	U-1= c (\tfrac{\tau}{n})^{1/4} + O(\sqrt{\tfrac{\log^2 n}{n}}) 
	\ \ , \ \ 
	B= c' (\tfrac{\tau}{n})^{-1/4} + O(1)
	\ \ , \ \ 
	\frac{1}{t^{n+1}} = 4^n e^{-\tau ( 1 + O(\frac{\log^2 n}{n}))}
\end{align}
and, recalling that  $q=\mu n^{1/4}$ with $\mu\leq 1$,  
\begin{align}\label{eq:PuiseuxTau2}
	U^{q} & =\exp ( -c\mu (\tau^{1/4}+ O(\tfrac{\log n}{n^{1/4}})) ).
\end{align}
where all big-O's are uniform in all parameters.
Moreover, since $\Re(\tau^{1/4}) \geq 0$ and $c,\mu>0$ we have\footnote{If $\Re(x)\leq 0$ then $|e^x-1|=|e^x-e^0|=|\int_{[0,x]} e^tdt| \leq |x|$.}
$$
|1-U^q|=\left|e^{-c\mu (\tau^{1/4}+ O(\tfrac{\log n}{n^{1/4}}))} - 1 \right|
\leq c \mu \tau^{1/4}(1 + O(\tfrac{\log n}{n^{1/4}})) = O(\mu \tau^{1/4}).
$$
We thus get from~\eqref{eq:last1} that for $\tau \in \mathcal{H}_-$ we have 
\begin{align*}
	|F_{\tilde S}| \leq C_p 
	n^{\frac{m_{int}{+m_1}+n_0+n_1}{4}} \mu^{n_1+m_0} |\tau|^{\frac{m_0-m_{int}-n_0-2}{4}}.
\end{align*}
It follows that the contribution of $\mathcal{H}_-$ to~\eqref{eq:Cauchy} is bounded, in modulus, by
\begin{align}\label{eq:finalEstimate4}
	C_p n^{\frac{m_{int}{+m_1}+n_0+n_1}{4}} \mu^{n_1+m_0}\oint_{\mathcal{H}_-} (n^{-1} d\tau)4^n e^{-\tau} |\tau|^{C_p} 
	\leq C_p 4^n n^{\frac{m_{int}{+m_1}+n_0+n_1-4}{4}} \mu^{n_1+m_0},
\end{align}
since the remaining function of $\tau$ is integrable thanks to the exponential factor.

We now consider the contribution of the remaining contours. 
Note that for $\tau \in \mathcal{H}_+$ or for $t \in \mathcal{C}_n$, because of the factor $t^{-n-1}$, the integrand of the corresponding contour integral is dominated by $4^n$ by a superpolynomial factor ($\exp(-\Theta(\log^2 n))$ in the first case and $(4r)^{-n}$ in the second.
Therefore the contribution of these contours is more than polynomially smaller than the one of $\mathcal{H}_-$, and then the estimate~\eqref{eq:finalEstimate4} is valid for the whole contribution. We thus obtain:
\begin{align}\label{eq:last2}
	{|[t^n]F_{\tilde S}|} \leq C_p 4^n n^{\frac{m_{int}{+m_1}+n_0+n_1-4}{4}} \mu^{n_1+m_0}.
\end{align}

This ends the complex-analytic part of the proof.

\subsubsection{Exponent counting and dominant configurations}
 
To bound the r.h.s. of \eqref{eq:last2}, we will prove the following lemma:

\begin{lemma}\label{lemma"exponentCounting}
For any skeleton, in notation above, we have 
$$
 n^{\frac{m_{int}{+m_1}+n_0+n_1-4}{4}} \mu^{n_1+m_0}
 \leq {n^{\frac{p}{2}-\frac{3}{2}}
 \mu^{\lceil \frac{p}{2}\rceil}}$$
Moreover, in the case of $p$ even, the equality holds if and only if $S$ is a binary tree with $k=p$ leaves (i.e. $p$ external edges) in which the external edges are attached by pairs to $\frac{p}{2}$ vertices of the scheme, which all have abscissa in $[a,b+1]$.
\end{lemma}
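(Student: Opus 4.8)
The plan is to control the exponent $E := m_{int} + m_1 + n_0 + n_1 - 4$ of $n$ and the exponent $n_1 + m_0$ of $\mu$ purely combinatorially, using structural relations between the skeleton $S$ (with $k$ leaves, $\ell$ inner non-root vertices, hence $k + \ell$ non-root vertices and $k+\ell$ edges) and the decoration data. First I would record the basic identities: every external edge falls into exactly one of three classes according to its endpoint leaf carrying one decoration with $x_e^-\in[a+1,b]$, one decoration with $x_e^-\notin[a+1,b]$, or $h\ge 2$ decorations (the frozen case), so if we call these counts $m_{in}^{ext}$, $m_0$, $m_1$ respectively we get $k = m_{in}^{ext} + m_0 + m_1$. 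Likewise $n_0 + n_1 + n_2 = \ell$, and $m_{int} = |F_{int}| = (k+\ell) - k = \ell$ is the number of internal edges; wait --- more carefully $m_{int}$ counts internal edges, of which there are $\ell$ (each non-root inner vertex has exactly one parent edge, and that edge is internal iff... ) so I would pin down $m_{int}\le \ell$ with the precise relation. The decoration-counting identity is the key input: the $p$ decorations are distributed among leaves and inner vertices, each leaf and each unary inner vertex carries at least one, so $p \ge k + (\text{number of unary } g_i) + (\text{extra decorations})$.

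Next I would combine these into the two target inequalities. For the $\mu$-exponent: since $\mu \le 1$, larger exponent means smaller, so I want a lower bound $n_1 + m_0 \ge \lceil p/2\rceil$; this should follow because abscissas in $I_1$ (contributing to $n_1$) and external edges reaching outside $[a+1,b]$ (contributing to $m_0$) are exactly the "far" vertices, and a decorated vertex at abscissa in $\{a,a+1,b,b+1\}$ forces structure that "uses up" decorations two at a time --- the $\lceil p/2\rceil$ is the parity-rounded count of such far excursions. For the $n$-exponent, I would show $E \le p/2 - 3/2$, i.e. $m_{int} + m_1 + n_0 + n_1 \le 2\lfloor p/2\rfloor - 2 \cdot(\text{something}) + \dots$; concretely I expect to bound each of $m_{int}, m_1, n_0, n_1$ in terms of $p$ via the tree identities and the fact that a branching skeleton with many leaves needs many decorations, then check the arithmetic gives the constant $-4$ in the numerator, i.e. $n^{-3/2}$ after division, matching Stirling's $n^{-3/2}$ for $Cat(n)$.

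For the equality case when $p$ is even: I would run the inequalities as equalities and see which constraints become tight. Equality in the decoration count forces every decoration to sit on a leaf (no spare decorations on inner binary vertices, no unary vertices), so $k$ is as large as possible; the combinatorics of a binary tree with all $p$ decorations on leaves, subject to also making $m_1 = n_0 = n_1 = 0$ (so the $\mu$-power is exactly $\mu^{m_0}$ with $m_0 = p/2$) and the $n$-power saturated, should pin down that leaves come in $p/2$ sibling pairs, each pair hanging off an inner vertex with abscissa in $[a,b+1]$ --- precisely the stated "paired" configuration. I would verify that this configuration indeed achieves equality by direct substitution into the exponent formula.

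\medskip

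\textbf{Main obstacle.} The hard part will be the bookkeeping in the $n$-exponent inequality: $m_{int}$, $m_1$, $n_0$, $n_1$ are not independent, and naively bounding each by $p$ is far too lossy --- one must exploit that a skeleton maximizing the number of edges/inner vertices is forced to spend decorations, and that "far" inner vertices ($n_0 + n_1$ of them) are geometrically constrained along paths. Getting the constant exactly right (the $-4$, equivalently the $-3/2$ after dividing by the Catalan asymptotics) and simultaneously matching it with the $\mu$-exponent so that the combined bound is $n^{p/2-3/2}\mu^{\lceil p/2\rceil}$ with a clean equality characterization is where the real work lies; I expect this to require a careful case split on the parity of $p$ and on whether inner vertices are unary or binary, decorated or not.
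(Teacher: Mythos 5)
Your overall strategy (pure exponent counting from structural identities of the skeleton plus decoration counting) is indeed the paper's strategy, but the two concrete bounds you propose to prove separately do not both hold, and the separation itself is the gap. The bound on the $n$-exponent alone, $\frac{m_{int}+m_1+n_0+n_1-4}{4}\leq \frac p2-\frac32$, is true (it follows from $m_{int}=\ell$, $k=1+\ell_{bin}+\xi$ and the decoration count $\ell_{un}+k+m_1\leq p$). But your proposed lower bound $n_1+m_0\geq \lceil p/2\rceil$ is false: take $p=2$ with the skeleton consisting of the root and two decorated leaf children, and $a\leq -2$, $b\geq 1$; then both external edges have $x_e^-=\pm1\in[a+1,b]$, there are no non-root internal vertices, so $n_1+m_0=0<1$. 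The stated inequality of the lemma still holds there only because the $n$-exponent drops to $-1<-\frac12$ and $\mu\geq n^{-1/4}$; in other words, deficits in the $\mu$-exponent must be traded against surpluses of negativity in the $n$-exponent using $\mu^{-1}\leq n^{1/4}$, and this joint accounting cannot be decoupled as you plan. The mechanism you are missing is the paper's pairing inequality: every external edge with $x_e^-\in[a+1,b]$ forces its attachment vertex to have abscissa in $[a,b+1]$ (so it contributes to $n_1+n_2$), and each internal vertex absorbs at most two such edges, giving $k-m_0\leq 2n_1+2n_2^{e,bi}+n_2^{e,un}+2\xi$; the terms involving $n_2$, $\delta=p-k$, $\xi$ and the frozen edges are then converted into powers of $n^{-1/4}$ and cancelled against the slack in the $n$-exponent, with a parity correction ($\chi$) handling odd $p$.

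Your equality analysis is also off: you propose the extremal configuration to have $n_1=0$ and $m_0=p/2$, but that configuration is strictly subdominant (its $n$-exponent is roughly $\frac{p-5}{4}$ versus $\frac{2p-6}{4}$). In the actual equality case (even $p$) one must have $\delta=0$, $m_1=0$ and $m_0=0$, with equality in the pairing inequality forcing all $p$ external edges to satisfy $x_e^-\in[a+1,b]$ and to be attached in pairs to $p/2$ internal vertices; it is $n_1$ (not $m_0$) that carries the exponent $p/2$ of $\mu$. So the skeleton of your plan needs to be reorganized around the single combined inequality, the pairing bound, and the $\mu$-versus-$n^{-1/4}$ trade, rather than two independent estimates.
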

\begin{proof}
Since $n$ goes to infinity and $\mu\leq 1$, to maximize the upper bound we have, roughly speaking, to look for the largest possible value of $m_{int}+m_1+n_0+n_1$ and the smallest possible value of $n_1+m_0$. However there might be some competition between these two factors so we have to be more precise. 

We write $\ell=\ell_{un}+\ell_{bin}$ where $\ell_{un}$ and $\ell_{bin}$ are respectively the number of non-root internal vertices of $S$ with $1$ and $2$ children. Since $S$ has $k$ leaves, we have $k= 1+\ell_{bin}+\xi$, where $\xi$ is equal to one if the root is binary in the scheme, and zero otherwise.
Let $\delta=p-k$, which is nonnegative since all leaves are decorated at least once.
We have, noting that $\ell=m_{int}=n_1+n_1+n_2$ and that $\ell=\ell_{un}+k-1-\xi=p+\ell_{un}-\xi-\delta-1$, 
$$
m_{int}{+m_1}+n_0+n_1-4=2l{+m_1}-n_2-4=2p-6+2\ell_{un}{+m_1}-2\xi-2\delta -n_2
\leq
2p-6+\ell_{un}-\delta-2\xi -n_2
$$
where for the inequality we used that {$\ell_{un}-\delta +m_1\leq 0$} (or equivalently, that $\ell_{un}+k+m_1\leq p$, which holds since by construction leaves and non-root unary vertices are necessarily decorated at least once, but the $m_1$ leaves incident to frozen edges are decorated at least once more).
Hence
\begin{align}\label{eq:ineq1}
 n^{\frac{m_{int}{+m_1}+n_0+n_1-4}{4}} \leq n^{p-\frac{3}{2}}n^{\frac{\ell_{un}-n_2}{4}}n^{-\frac{\xi}{2}}n^{-\frac{\delta}{4}}.
\end{align}
Now, write $n_2=n_2^{e}+n_2^i$, where $n_2^{e}$ and $n_2^i$ are respectively, among the vertices of the scheme contributing to $n_2$, the vertices which are attached to at least one external edge, and the ones which are not attached to an external edge. Write furthermore $n_2^e = n_2^{e,un}+n_2^{e,bi}$, separating the contribution to $n_2^e$ of unary and binary vertices of $S$.

The number of external edges of $S$ is equal to the number of leaves $k$, so there are $k-m_0$ external edges which are such that $x^-_e \in [a+1,b]$.
For these edges, one necessarily has $x(\alpha_e)\in [a,b+1]$ and therefore their attachment vertex contributes to the quantity $n_1+n_2$.
Moreover, each inner vertex can be attached to at most two external edges, which implies that
\begin{align}\label{eq:ineqPairs}
k-m_0 \leq 2n_1+2n_2^{e,bi}+n_2^{e,un}+2\xi,
\end{align}
hence
\begin{align}\label{eq:ineqPairs2}
k -2n_2^{e,bi} - n_2^{e,un} -2\xi \leq m_0 + 2n_1 \leq 2(m_0+n_1),
\end{align}
and 
$$n_1+m_0 \geq \frac{k}{2} -n_2^{e,bi} -\frac{n_2^{e,un}}{2} +\frac{\chi}{2}-\xi,
$$
{where $\chi$ is equal to $1$ if $k+n_2^{e,un}$ is odd and to zero otherwise.}
Therefore 
\begin{align}\label{eq:ineq2}
\mu^{n_1+m_0} \leq \mu^{\frac{k}{2}-n_2^{e,bi}-\frac{n_2^{e,un}}{2}+\frac{\chi}{2}-\xi} 
= \mu^{\frac{p}{2}-\frac{\delta}{2}-n_2^{ebi}-\frac{n_2^{e,un}}{2}+\frac{\chi}{2}-\xi} 
\leq \mu^{\frac{p}{2}+\frac{\chi}{2}} n^{\frac{\delta}{8}+\frac{n_2^{e,bi}+n_2^{e,un}/2}{4}}n^{\frac{\xi}{4}},
\end{align}
where we used that $q\geq 1$, hence $\mu^{-1}\leq n^{1/4}$.

From~\eqref{eq:ineq1} and~\eqref{eq:ineq2}, we get
\begin{align}\label{eq:ineq3}
 n^{\frac{m_{int}{+m_1}+n_0+n_1-4}{4}}  \mu^{n_1+m_0}
\leq
n^{p-\frac{3}{2}}\mu^{\frac{p}{2}+\frac{\chi}{2}} n^{-\frac{\delta}{8}} n^{\frac{\ell_{un}-n_2+n_2^{e,bi}+n_2^{e,un}/2}{4}}.
\end{align}
(Note that we used $n^{-\frac{\xi}{2}}n^{\frac{\xi}{4}}=n^{-\frac{\xi}{4}}\leq 1$).
Now we have
$
\ell_{un} \leq n_2^i + n_2^{e, un}.
$
Indeed, a non-root unary vertex of the scheme is necessarily decorated, so it only appears with a label in~$\{a,a+1,b,b+1\}$ so it contributes to $n_2$.

It follows that
$$
\ell_{un}-n_2+n_2^{e,bi}+n_2^{e,un}/2 \leq 
n_2^i+n_2^{e,un}-n_2+n_2^{e,bi}+ n_2^{e, un}/2  =  n_2^{e,un}/2.
$$

Therefore~\eqref{eq:ineq3} implies 
$$
 n^{\frac{m_{int}+m_1+n_0+n_1-4}{4}} \mu^{n_1+m_0}
 \leq n^{\frac{p}{2}-\frac{3}{2}}
 \mu^{ \frac{p}{2}}
 \mu^{\frac{\chi}{2}} n^{\frac{n_2^{e,un}-\delta}{8}}.$$
 Note that $\mu^{\frac{\chi}{2}}\leq 1$ and that $n_2^{e,un}\leq \delta$, since vertices counted by $n_2^{e,un}$ are unary, hence decorated.
  
  Therefore, if $p$ is even we have the wanted inequality. If $p$ is odd, then either $k+n_2^{e,un}$ is odd, in which case $\frac{p+\chi}{2}=\lceil \frac{p}{2}\rceil$ and we also have the wanted inequality,   or $k+n_2^{e,un}$ is even. 
  But this implies $k+n_2^{e,un} \neq p$, and since $k+n_2^{e,un} \leq p$ (by counting decorated vertices) this implies $k+n_2^{e,un}<p$, hence $\delta>n_2^{e,un}$.
  Therefore we have $n^{\frac{n_2^{e,un}-\delta}{8}}\leq n^{-\frac{1}{8}}\leq \mu^{\frac{1}{2}}$ (using again~$\mu^{-1}\leq n^{1/4}$) and the wanted bound holds in all cases.

The statement about the equality case for even $p$ is direct by requiring equality in all the inequalities used along the proof (we need $\delta=0$ from the factor $n^{-\frac{\delta}{8}}$, therefore $k=p$ {and $m_1=0$}; moreover, equality in~\eqref{eq:ineqPairs2} forces $m_0=0$. Finally, equality in~\eqref{eq:ineqPairs} then implies that the $p$ external edges are all such that $x_e^-\in[a+1,b]$ and that they are attached together by pairs).
\end{proof}

We get from the last lemma and from~\eqref{eq:last2} that
$
|[t^n]F_{\tilde S} |\leq  
C_p4^n n^{p-\frac{3}{2}} \mu^{\lceil\frac{p}{2}\rceil}.
$

Since the quantity~\eqref{eq:bigsum} is the sum of $[t^n]F_{\tilde S}$ over the \emph{finite} set all schemes $\tilde S$, we have finally obtained~\eqref{eq:toProve}. This ends the proof Lemma~\ref{lemma:momentBound}.

\subsection{End of the proof of tightness}

To end the proof of tightness, by the Kolmogorov criterion ~\cite[Theo. 12.3]{BIL68}, it is enough to apply Lemma~\ref{lemma:momentBound} with {$p=4$} and to prove that the sequence $(n^{-1/2}\Delta^n_0)_n$ is tight.  This will also prove the $\left(\frac{1}{2}-`e\right)$-H\"olderianity in Theorem~\ref{thm:tightness}.

To prove tightness of $(n^{-1/2}\Delta^n_0)_n$, we prove that the second moment is finite:
\begin{lemma}
	We have $\mathbf{E}\left[ ((n^{-1/2}\Delta^n_0 ) ^2 \right] =O(1)$.
\end{lemma}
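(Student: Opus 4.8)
The goal is to bound $\E[(\Delta^n_0)^2]$ by $O(n)$. The key observation is that $\Delta^n_0 = M^n_0 - M^n_{-1}$, so $(\Delta^n_0)^2 = M^n_0{}^2 - 2M^n_0 M^n_{-1} + M^n_{-1}{}^2$, and hence $Cat(n)\,\E[(\Delta^n_0)^2]$ is an alternating sum of the quantities $T_n(i_1,i_2)$ counting binary trees of size $n$ with two marked vertices of prescribed abscissas $i_1,i_2 \in \{-1,0\}$. This is exactly the setup of Section~\ref{subsubsec:branches} with $p=2$, $a=-1$, $b=0$ (or, more simply, a direct hands-on version of it), so I would reuse the skeleton/branch machinery already built, specialized to two marked points that are a bounded distance apart.

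\textbf{First step: reduce to a generating-function estimate.} Form the generating function $\sum_n Cat(n)\,\E[(\Delta^n_0)^2]\,t^n = \sum_{\tilde S} F_{\tilde S}(t)$, summing over the finitely many schemes with $p=2$ marked vertices and with $I_2 = \{-1,0,0,1\}$ (here $q=b-a=1$, so $\mu = n^{-1/4}$, the smallest possible value). Since $q=1$, the parameter $n_1$ counting internal vertices with abscissa in the ``bulk interval'' $[a+2,b-1]=[1,0]=\emptyset$ is forced to be zero, and the interval $I_1$ is empty, so the sum over internal abscissas collapses dramatically: only the scheme consisting of a single binary vertex (the root) carrying both decorations, with two external leaf-edges, and a few degenerate variants, can contribute. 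In fact the dominant scheme is the one identified in the equality case of Lemma~\ref{lemma"exponentCounting} with $p=2$: a root with two children, both external edges attached to it, all abscissas in $\{-1,0,1\}$.

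\textbf{Second step: apply the bound already proved.} Rather than redo the analysis, I would simply invoke the chain of estimates from the proof of Lemma~\ref{lemma:momentBound}: for each scheme $\tilde S$ we have $|[t^n] F_{\tilde S}| \leq C\, 4^n n^{p/2 - 3/2}\mu^{\lceil p/2\rceil}$ by~\eqref{eq:last2} and Lemma~\ref{lemma"exponentCounting}. With $p=2$ this reads $|[t^n]F_{\tilde S}| \leq C\,4^n n^{-1/2}\mu$. Summing over the finite set of schemes and dividing by $Cat(n) = \Theta(4^n n^{-3/2})$ gives $\E[(\Delta^n_0)^2] \leq C\, n \mu = C\, n \cdot n^{-1/4} = C\, n^{3/4} = O(n)$, which is more than enough. (One could even note that we only need the crude bound $\E[(\Delta^n_0)^2] = O(n)$, so the sharper $\mu$-gain is a bonus; the point is just finiteness after the $n^{-1/2}$ rescaling.)

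\textbf{Main obstacle.} The only subtlety is purely bookkeeping: the proof of Lemma~\ref{lemma:momentBound} was written for a pair $a \le b$ with $(b-a)/n^{1/4} \le 1$ and $q = b-a \ge 1$, whereas here $a=-1,b=0$ is a \emph{fixed} pair with $q=1$, and one must check that the whole argument — the branch generating functions~\eqref{eq:Hi}, the Hankel-contour estimates~\eqref{eq:PuiseuxTau}--\eqref{eq:PuiseuxTau2}, and the exponent-counting Lemma~\ref{lemma"exponentCounting} — goes through verbatim in this boundary case (it does: nothing in the complex-analytic part used $q \to \infty$, and $q=1$ is allowed). Alternatively, and perhaps more transparently, one can avoid the general machinery entirely and give a direct computation: $M^n_0$ is (up to the $Cat(n)$ normalization) a coefficient extraction in an explicit algebraic series obtained from Proposition~\ref{pro:comb-delta-M} or from the classical generating function of binary trees with one marked vertex of given abscissa, namely $[t^n]\, T(t) B(t)$ with the notation of~\eqref{eq:gfs}; differencing at abscissas $0$ and $-1$ produces an extra factor $(U-1) = O((1-4t)^{1/4})$, and singularity analysis then yields $\E[(\Delta^n_0)^2] = O(n^{3/4})$ directly. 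Either route is routine once the $p=2$ specialization is set up; there is no real difficulty, which is why the statement is left as a short lemma.
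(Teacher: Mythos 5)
Your plan --- expand $(\Delta^n_0)^2$ into an alternating sum of $T_n(i_1,i_2)$, decompose by scheme, and do singularity analysis --- is the paper's approach, and identifying the fork as the dominant scheme is correct. The gap is in your second step: Lemma~\ref{lemma:momentBound}, the estimate~\eqref{eq:last2}, and the exponent-counting lemma all concern the \emph{second} difference $\Delta^n_b-\Delta^n_a = M^n_b-M^n_{b-1}-M^n_a+M^n_{a-1}$, not the \emph{first} difference $\Delta^n_0=M^n_0-M^n_{-1}$. The signed expansion~\eqref{eq:bigsum} for the former has $4^p$ terms with marked abscissas ranging over $\{a,a+1,b,b+1\}$, and it is precisely this double cancellation that produces the factors $|1-U^q|$ in the external-edge table~\eqref{eq:tableWeExterne} and ultimately the $\mu^{\lceil p/2\rceil}$ gain. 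Expanding $(\Delta^n_0)^2$ gives only $2^p$ signed terms with abscissas in $\{-1,0\}$; the corresponding external-edge series carries a single factor $(1-U)$ rather than $(1-U)(1-U^q)$, so the extra $\mu$-gain you read off is simply not there. Concretely, your intermediate bound $\mathbf{E}[(\Delta^n_0)^2]=O(n^{3/4})$ is false: since $n^{-1/2}\Delta^n_0$ converges to the nondegenerate limit $f'_{ISE}(0)$, one has $\mathbf{E}[(\Delta^n_0)^2]=\Theta(n)$, which is why the lemma asserts only $O(1)$ after rescaling, not $o(1)$. So this is not ``bookkeeping'': with $p=2$, $a=-1$, $b=0$, the machinery of Lemma~\ref{lemma:momentBound} bounds $\mathbf{E}[(\Delta^n_0-\Delta^n_{-1})^2]$, a genuinely smaller quantity than the one you need.

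Your ``alternative route'' has a related slip, from the second moment to the first: the series $T\,B\,(1-U)$ describes trees with \emph{one} marked vertex and controls $\mathbf{E}[\Delta^n_0]$, not $\mathbf{E}[(\Delta^n_0)^2]$. What the paper actually does is the two-marked-vertex version of this hand computation, which is what is needed: it writes $Cat(n)\mathbf{E}[(\Delta^n_0)^2]$ as the alternating sum $T_n(1,1)-T_n(1,0)-T_n(0,1)+T_n(0,0)$, identifies the fork scheme with generating function $\sum_x H_x(H_{x-1}-H_x)^2$, resums to find a singularity of order $O((1-4t)^{-1/2})$, hence $[t^n]=O(4^n n^{-1/2})$ and $\mathbf{E}[(\Delta^n_0)^2]=O(n)$. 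That short explicit calculation is the content of the lemma; it cannot be replaced by a citation of the second-difference moment estimate.
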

\begin{proof}
	The proof can be done with the same approach as the one of Lemma~\ref{lemma:momentBound}, but is much simpler. In particular, all calculations can be done explicitly and by hand. Since $\Delta^n_0=M^n_1-M^n_0$,  
	the quantity $Cat(n) \mathbf{E}\left[ ((\Delta^n_0 ) ^2 \right]$ can be handled as in~\eqref{eq:bigsum}, and is equal to 
$$
	T_n(1,1)-T_n(1,0)-T_n(0,1)+T_n(0,0).
$$
	We thus have to count trees with two marked vertices, with labels either $1$ (with weight $+1$) or $0$ (with weight $-1$). In the dominant case, the corresponding skeleton is a fork \includegraphics[height=8mm]{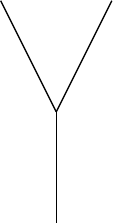}. Calling $x$ the label of the inner vertex of the fork, the corresponding generating function is thus $H_x (H_{x-1}-H_{x})^2$, and this expression can be summed over $x$ (to work with exact expressions, one should consider separately the cases $x<0$, $x=0$, $x=1$, $x>1$). After resummation and  examination of the singularity (the calculation is easily done explicitly\footnote{For example the case $x>1$ is evaluated as $\sum_{x\geq 1} BU^x (BU^{x-1}-BU^x)^2 = B^3 \frac{U(1-U)^2}{1-U^3}$. The behaviour close to $t=\frac{1}{4}$ is $O((1-4t)^{-1/2})$ as claimed.})
	the generating function is found to be of order $O((1-4t)^{-1/2})$ close to $t=\frac{1}{4}$.
	This singularity corresponds to a growth of $O(4^n n^{-1/2})$ for the coefficients (using either the transfer theorems of~\cite{Flajolet} or reproducing the argument of the previous section via Hankel contours). The subdominant case is handled similarly, and we finally get  
	$$Cat(n) \mathbf{E}\left[ (\Delta^n_0 ) ^2 \right] \leq 4^n n^{-1/2},
	$$
	which is what we need. 
\end{proof}

\subsection{Another consequence of the proof}
\label{sec:gaussianPairs}

Now that our estimate of~\eqref{eq:bigsum} is complete, we can go back and estimate precisely the first order asymptotic contribution. 
As a bonus, we will get the following proposition. The non-differentiability and non-Hölderianity in this statement give the second half of the conjecture in~\cite{BM-J} (the first half was proved in Theorem~\ref{thm:tightness}).

\begin{proposition}\label{prop:bonus}
Fix $\alpha\in \mathbb{R}$. We have the convergence in distribution
\begin{align}\label{eq:diffMoment}
 \frac{f'_{ISE}(\alpha+\mu)-f'_{ISE}(\alpha)}{\sqrt{\mu}}
 \xrightarrow[\mu\stackrel{>}{\rightarrow} 0]{(d)}
 \sqrt{2f_{ISE}(\alpha)} \mathcal{N},
\end{align}
where $\mathcal{N}$ is a standard centred and reduced Gaussian random variable independent from $f_{ISE}$.
Consequently, the function $f'_{ISE}$ is, almost surely, differentiable almost nowhere inside its support -- and in fact, it is not $(\frac{1}{2}+`e)$-Hölder-continuous, for any $`e>0$.
\end{proposition}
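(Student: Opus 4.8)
The plan is to establish~\eqref{eq:diffMoment} by the method of moments, recycling the generating-function analysis of Lemma~\ref{lemma:momentBound} but now extracting first-order asymptotics rather than a bound, and then to deduce the two corollaries by soft probabilistic arguments. Fix $\alpha$, let $\mu>0$ be small, set $X_\mu:=\mu^{-1/2}\big(f'_{ISE}(\alpha+\mu)-f'_{ISE}(\alpha)\big)$, and in the discrete model take $a=a_n:=\floor{\alpha n^{1/4}}$ and $b=b_n:=a_n+\floor{\mu n^{1/4}}$ (up to the deterministic constant relating the normalisations of Proposition~\ref{prop:BMJ} and Theorem~\ref{thm:tightness}). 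By Theorem~\ref{thm:tightness}, $\big(n^{-1/2}\Delta^n_{n^{1/4}t},\,n^{-3/4}M^n_{n^{1/4}t}\big)\to\big(f'_{ISE}(t),f_{ISE}(t)\big)$, and the uniform moment bounds of Lemma~\ref{lemma:momentBound} (with the analogous, easier bounds for $M^n$) give the uniform integrability needed to pass mixed moments to the limit: for all integers $p,j\ge 0$,
\[ \mathbf{E}\!\big[(f'_{ISE}(\alpha+\mu)-f'_{ISE}(\alpha))^p\,f_{ISE}(\alpha)^j\big]=\lim_{n\to\infty}\frac{n^{-p/2-3j/4}}{Cat(n)}\sum_{\tilde S}[t^n]F_{\tilde S}, \]
where $\tilde S$ now ranges over schemes of trees carrying $p+j$ marked vertices: $p$ of them with the alternating signs at abscissas in $\{a,a+1,b,b+1\}$ exactly as in~\eqref{eq:bigsum}, and $j$ unsigned ones pinned to abscissa $a_n$. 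Since Lemma~\ref{lemma:momentBound} makes the left-hand side $O(\mu^{\lceil p/2\rceil})$, it is $o(\mu^{p/2})$ for odd $p$, so $\mathbf{E}[X_\mu^p f_{ISE}(\alpha)^j]\to 0$ there; only even $p$ requires work.

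For even $p$ the target identity is
\[ \lim_{\mu\to 0^+}\mu^{-p/2}\,\mathbf{E}\!\big[(f'_{ISE}(\alpha+\mu)-f'_{ISE}(\alpha))^p\,f_{ISE}(\alpha)^j\big]=(p-1)!!\,2^{p/2}\,\mathbf{E}\!\big[f_{ISE}(\alpha)^{p/2+j}\big], \]
and~\eqref{eq:diffMoment} follows from it by the method of moments applied to the conditional laws $\mathcal{L}(X_\mu\mid f_{ISE})$ — whose moments then converge to those of $\mathcal{N}\big(0,2f_{ISE}(\alpha)\big)$, a determinate law, so $\mathcal{L}(X_\mu\mid f_{ISE})\Rightarrow\mathcal{N}\big(0,2f_{ISE}(\alpha)\big)$ (this route needs only $\mathbf{E}[f_{ISE}(\alpha)^j]<\infty$, itself a consequence of the discrete bounds for $M^n$, and avoids any joint-moment-determinacy issue). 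To prove the displayed identity I would re-run the proof of Lemma~\ref{lemma:momentBound} for the $(p+j)$-marked expansion, keeping track of constants in each Hankel estimate instead of passing to moduli. The suitably extended Lemma~\ref{lemma"exponentCounting} shows that the only schemes surviving the iterated limit $\lim_{\mu\to0}\mu^{-p/2}\lim_{n\to\infty}$ are, for $p$ even, those in which the $p$ signed marks form $p/2$ pairs on $p/2$ ``fork'' vertices of abscissa in $[a,b+1]$, with $\delta=m_0=m_1=0$; all other schemes contribute a strictly smaller power of $n$ (killed on dividing by $Cat(n)$) or a strictly larger power of $\mu$ (killed on dividing by $\mu^{p/2}$), and drop out. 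The mechanism is that a single mark on a leaf contributes, via~\eqref{eq:tableWeExterne}, a \emph{second difference} of powers of $U$, of order $\mu$, whereas a pair of marks on a fork contributes the \emph{square} of a first difference, $\big(\sum_{`e,`e'}(-1)^{`e+`e'}BU^{|a+q`e+`e'-x^-_e|}\big)^2$; summed over the $\sim q=\mu n^{1/4}$ admissible positions of the fork and read off the Hankel contour $t=\frac14(1+\tau/n)$ through the Puiseux expansions~\eqref{eq:Puiseux}, this produces exactly one factor $2\mu$ at leading order — the combinatorial incarnation of the quadratic variation $\int_{\alpha}^{\alpha+\mu}2f_{ISE}(s)\,ds\approx 2\mu\, f_{ISE}(\alpha)$ of the conjectural SDE, the ``$2$'' being $\Var(\BNEG(1))=2$. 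Summing over the $(p-1)!!$ pairings of the $p$ marks gives the prefactor $(p-1)!!\,2^{p/2}$; the tree ``backbone'' connecting the $p/2$ forks, the $j$ pinned leaves and the root — all localising near abscissa $\alpha$ as $\mu\to0$ — contributes, after division by $Cat(n)$ and passage to the limit, precisely the moment $\mathbf{E}[f_{ISE}(\alpha)^{p/2+j}]$ by Proposition~\ref{prop:BMJ}, yielding the target identity.

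The main obstacle is exactly this refined computation: turning the inequalities in the proof of Lemma~\ref{lemma"exponentCounting} into an \emph{exact} leading-order identity requires summing the fork contributions over their positions and matching the backbone generating function (after division by $Cat(n)$) with the moments of $f_{ISE}(\alpha)$ — a bookkeeping-heavy step, even though every analytic ingredient (last-passage decompositions, Puiseux expansions, Hankel integrals) already appears in the proof of Lemma~\ref{lemma:momentBound}. A tempting shortcut would be to prove the statement first for the companion process of Section~\ref{sec:comp}, where~\eqref{eq:DMS} makes $\Delta^\star_{i+1}-\Delta^\star_i=\sum_{k=1}^{M^\star_i}(g^{(k)}-2)$ an explicit sum of i.i.d.\ centred steps of variance $2$, so that $n^{-1/2}(\Delta^\star_{b+1}-\Delta^\star_{a+1})$ is visibly close to $\sqrt{2\mu\,n^{-3/4}M^\star_a}\,\mathcal{N}$, and then transfer via Proposition~\ref{prop:discreteDiff}; but that proposition identifies the two laws only under double conditioning, so this route seems no easier, and I would carry out the direct computation.

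Finally, the two consequences. Fix $\alpha$. On the event $\{f'_{ISE}\text{ is differentiable at }\alpha\}$ one has $X_\mu\to 0$, hence $\1_{\{f'_{ISE}\text{ diff. at }\alpha\}}\le\liminf_{\mu\to0^+}\1_{\{|X_\mu|\le\eta\}}$ for every $\eta>0$; multiplying by $g(f_{ISE}(\alpha))$ for $g\ge0$ continuous and supported in $(0,\infty)$, taking expectations and using Fatou together with~\eqref{eq:diffMoment},
\[ \mathbf{E}\big[\1_{\{f'_{ISE}\text{ diff. at }\alpha\}}\,g(f_{ISE}(\alpha))\big]\le\mathbf{E}\big[\1_{\{|\sqrt{2f_{ISE}(\alpha)}\,\mathcal{N}|\le\eta\}}\,g(f_{ISE}(\alpha))\big], \]
and the right-hand side tends to $0$ as $\eta\to0$, so $\mathbb{P}\big(f'_{ISE}\text{ differentiable at }\alpha,\ f_{ISE}(\alpha)>0\big)=0$. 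Integrating over $\alpha$ (Fubini), a.s.\ the set of points where $f'_{ISE}$ is differentiable meets $\{f_{ISE}>0\}$ in a Lebesgue-null set, i.e.\ $f'_{ISE}$ is a.s.\ differentiable almost nowhere inside its support. The same estimate, run on the event ``$f'_{ISE}$ is $(\frac12+`e)$-H\"older near $\alpha$'' (on which also $X_\mu=O(\mu^{`e})\to0$), gives $\mathbb{P}\big(f'_{ISE}\text{ is }(\tfrac12+`e)\text{-H\"older near }\alpha,\ f_{ISE}(\alpha)>0\big)=0$ for each fixed $`e>0$; since $\{f_{ISE}>0\}$ has a.s.\ positive Lebesgue measure, Fubini and a union bound over positive rational $`e$ give that, almost surely, $f'_{ISE}$ is not $(\frac12+`e)$-H\"older on $\{f_{ISE}>0\}$ — a fortiori not on $\mathbb{R}$ — for any $`e>0$.
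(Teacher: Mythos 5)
Your core argument is the same as the paper's: both prove \eqref{eq:diffMoment} by the method of moments, re-running the skeleton/Hankel analysis of Lemma~\ref{lemma:momentBound} to extract the first-order term, identifying (via Lemma~\ref{lemma"exponentCounting}) the dominant schemes as those where the $p$ signed marks sit in pairs on ``fork'' vertices with abscissa in $[a,b+1]$, summing the $\sim q$ fork positions to produce one factor $2\mu$ per pair and the pairing factor $(p-1)!!$, recognizing the surviving backbone generating function as that of trees with $p/2$ marks of abscissa $a$ (hence the moment of $M^n_a$, i.e.\ of $f_{ISE}(\alpha)$ in the limit), killing odd $p$ by the $\lceil p/2\rceil$ exponent in Lemma~\ref{lemma:momentBound}, and transferring discrete moments to the continuum via Theorem~\ref{thm:tightness} plus the uniform moment bounds. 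Where you deviate is in the packaging. First, you compute the joint moments $\mathbf{E}[X_\mu^p f_{ISE}(\alpha)^j]$ for all $j$ and then invoke ``the method of moments applied to the conditional laws $\mathcal{L}(X_\mu\mid f_{ISE})$''; be aware that convergence of mixed moments does not by itself give convergence of conditional moments, so as written this step is shaky, and it does not in fact ``avoid any determinacy issue'': you still need either a two-dimensional method of moments for the pair $(X_\mu,f_{ISE}(\alpha))$ with a Carleman-type condition, or the paper's simpler route, which uses only the $j=0$ moments together with the fact (from \cite{BM}) that the moments of $f_{ISE}(\alpha)$ grow slowly enough for the mixture law $\sqrt{2f_{ISE}(\alpha)}\mathcal{N}$ to be moment-determinate. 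This is repairable, not fatal, but for the proposition as stated the extra $j\ge 1$ machinery buys you nothing for the convergence itself.

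Second, for the consequences your route is genuinely different from the paper's. The paper takes $\alpha=0$, where $f_{ISE}(0)>0$ almost surely, so the marginal convergence \eqref{eq:diffMoment} alone rules out differentiability (and $(\tfrac12+\varepsilon)$-H\"olderianity) at $0$ with positive probability, and then spreads this to almost every point of the support by the rerooting invariance of Section~\ref{sec:tv}. You instead work at a general deterministic $\alpha$, localize on $\{f_{ISE}(\alpha)>0\}$ with a weight $g(f_{ISE}(\alpha))$, and integrate over $\alpha$ by Fubini. This is a valid alternative, but note that passing to the limit in $\mathbf{E}[\mathbf{1}_{\{|X_\mu|\le\eta\}}\,g(f_{ISE}(\alpha))]$ requires \emph{joint} convergence of $(X_\mu,f_{ISE}(\alpha))$, i.e.\ exactly the heavier $j\ge1$ (or conditional-law) statement discussed above, whereas the paper's rerooting argument needs only the one-dimensional convergence at $\alpha=0$. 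So your approach is internally consistent once the joint convergence is made rigorous, but it is strictly heavier than the paper's soft argument; conversely, it has the merit of not invoking rerooting invariance.
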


\begin{samepage}
Note that~\eqref{eq:diffMoment} can be seen as a (weak) discrete version of the heuristic
	$$
	d f_{ISE}(t)' \approx \sqrt{2f_{ISE}(t)} dB_t. \\*
	$$
	(with, informally speaking, $dt\approx \mu$ and $dB_t\approx \mu^{1/2}\mathcal{N}$). 
\end{samepage}

\begin{figure}
  \centerline{\includegraphics[width=0.65\linewidth]{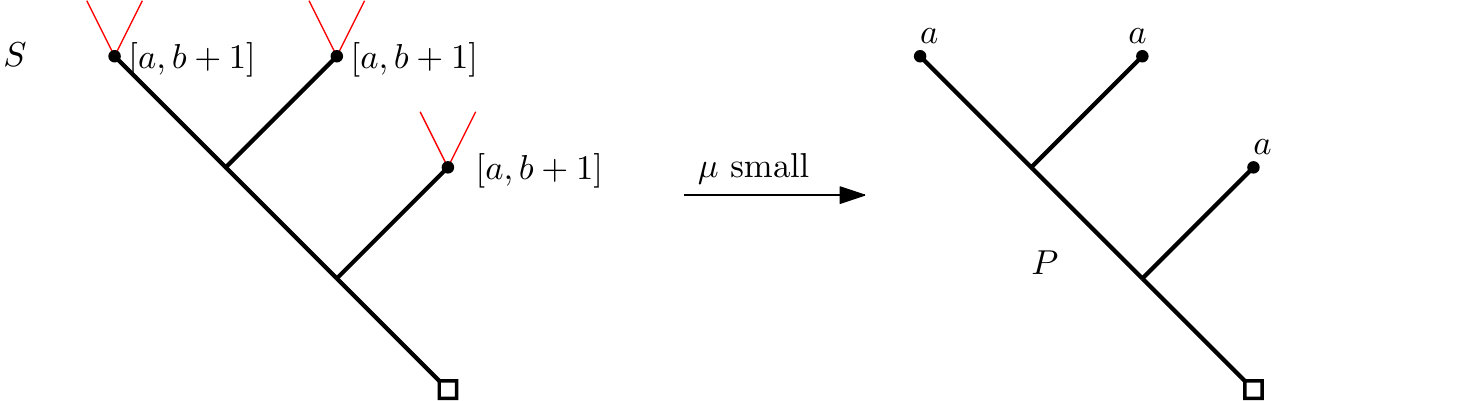}}
   \caption{Left: A scheme with $p=6$ leaves which dominates at first order the calculation of the moment $\mathbb{E}[(\Delta_{a}-\Delta_b)^{p}]$, after the proof of Lemma~\ref{lemma:momentBound} (the external edges, in red, are grouped in pairs and attached to vertices with abscissa in $[a,b+1]$). Calculations show that when $\mu=(b-a)n^{-1/4}$ goes to zero, the first-order contribution tends, up to identifiable factors, to the contribution that the "internal" scheme $P$ (in fat black) would give to the computation of the moment  $\mathbb{E}[(M_{a})^{p/2}]$, at the first order (Right). This observation leads to Proposition~\ref{prop:bonus}.
    }\label{fig:dominant}
  \end{figure}

\begin{proof}
We will prove convergence in law by proving convergence of moments, and we will first work at the discrete level.

{First consider the case of $p$ even, say $p=2r$.}
Notice that the proof of Lemma~\ref{lemma:momentBound}, and in particular Lemma~\ref{lemma"exponentCounting} shows that the schemes asymptotically dominating~\eqref{eq:bigsum}  
can be constructed as follows: start with a binary tree $P$ with $r$ leaves, attach a pair of dangling external edges to each leaf, and choose one of the $(2r-1)!!$ pairings of the leaves. Moreover the leaves of $P$ (internal vertices of $S$ to which external edges are attached) need to have abscissa in $[a,b+1]$. See Figure~\ref{fig:dominant}.

Let $S$ be such a scheme and let $\hat S=(S,x)$ be a compatible labelled skeleton. Call $z_1,\dots,z_{2r-1} \in [a,b+1]$ the labels of the vertices of $P$ (which are precisely the internal vertices of $S$), with $z_{r},\dots,z_{2r-1} \in [a,b+1]$ being the labels of the leaves of $P$.

By~\eqref{eq:FShat}, \eqref{eq:WeInterne} and~\eqref{eq:tableWeExterne}, the generating function corresponding to the labelled skeleton $\hat S$ in the computation of~\eqref{eq:bigsum} is given by
$$
F_{\hat S}(t) = t^{2r-1} T^{2r}
B^{|E(P)|} U^{\sum_{e \in E(P)} |x_e^- - x^+_e|} \times 
\prod_{i=1}^{2r} (B(1-U) (U^{b-z_i}+U^{z_i-a-1})^2,
$$
where we separated the contribution of internal and external edges ($E(P)$ denotes the set of edges of $P$, i.e. internal edges of $S$).
Note that for $t\in \mathcal{H}_n$, we have $B(1-U)=2+o(1)$ and\footnote{the term $o(1)$ is relative to $n$ going to infinity.} if $z_i\in [a,b+1]$ 
$$
U^{z_i-b+1}+U^{z_i-a} =  2 + O(\mu \tau^{1/4}) + o(1)
$$
as in~\eqref{eq:PuiseuxTau2}, with $\mu = (a-b)n^{-1/4}$ as before.
These estimates hold uniformly in $t \in \mathcal{H}_n$, in the $z_i$, and in $\mu$. Moreover, let $e$ be an external edge of $P$, we have $x^+_e\in [a,b+1]$ and
$$
U^{|x_e^- - x^+_e|}= U^{|x_e^- - a|}(1+O(\mu)+o(1)),
$$
again uniformly in all parameters. 
We deduce that, again uniformly,
$$
F_{\hat S}(t) = 4^r(1+O(\mu)+o(1)) t^{2r-1} T^{2r}
B^{|E(P)|} U^{\sum_{e \in E(P)} |z_e^- - z^+_e|}
$$
where the integers  $z^-_e,z^+_e$ are defined as $x_e^-, x_e^+$, but starting from the labelling $z_1,\dots,z_{r-1},a,a,\dots,a$ (in other words we fix the leaves of $P$ to abscissa $a$, but keep the abscissas of internal vertices).
Using that $t$ and $T$ go respectively to $\tfrac{1}{4}$ and $1$ on $\mathcal{H}_n$, we can also write
$$
F_{\hat S}(t) = (1+O(\mu)+o(1)) t^{r-1} T^{r}
B^{|E(P)|} U^{\sum_{e \in E(P)} |z_e^- - z^+_e|}.
$$
Since all big and little-Os are uniform,  and since as before contributions of the contour integrals outside of $\mathcal{H}_n$ can be neglected in~\eqref{eq:Cauchy} we conclude that
\begin{align}
\label{eq:inte1}
[t^n]F_{\hat S}
=
(1+O(\mu)+o(1)) 
[t^n] t^{r-1} T^{r}B^{|E(P)|} U^{\sum_{e \in E(P)} |z_e^- - z^+_e|} 
\end{align}
 (note that the exponential term $t^{-n-1}$ gives dominated convergence justifying all the estimates).
Now, the quantity  $t^{r-1} T^{r} B^{|E(P)|} U^{\sum_{e \in E(P)} |z_e^- - z^+_e|}$ is easily recognized, by~\eqref{eq:WeInterne} and reasoning similarly as in Section~\ref{subsubsec:branches}, as the generating function of binary trees with $r$ marked vertices with skeleton $P$, where marked vertices have all abscissas $a$, and such that the internal vertices of $P$ have labels $z_1,\dots,z_{r-1}$.

Since all estimates are uniform, we can now sum~\eqref{eq:inte1} over $z_1,\dots,z_{2r-1}$ and over all binary trees $P$.
We deduce that~\eqref{eq:bigsum} is equal to
$$
 (2r-1)!! 2^r (1+O(\mu)+o(1)) q^r T^*_n(\underbrace{a,a,\dots,a}_{r\mbox{ \footnotesize times}}),
$$
where $T^*_n(i_1,\dots,i_r)$ is the contribution to the number $T_n(i_1,\dots,i_r)$ of marked trees whose $r$ marked vertices are in generic position (i.e. their skeleton is a binary tree with $r$ leaves). 
To obtain this expression, we have summed first  over $z_r,\dots,z_{2r-1} \in [a,b+1]$ giving the factor $q^r$ (note that these variables do not appear in~\eqref{eq:inte1}), while the sum over $P$ and over its internal labels $z_1,\dots,z_{r-1}$ accounts for the possible relative positions of the $r$ marked vertices of label $a$ and of the abscissas of the internal vertices of their skeleton, in a configuration counted by $T^*_n(a,a,\dots,a)$.
Note also  the combinatorial factor $(2r-1)!!2^r=\frac{(2r)!}{r!}$ which is the ratio between the possible numberings of the $2r$ leaves of $S$ and the $r$ leaves of $P$.
Now, it is easily seen using again the same scheme techniques that for $a=O(n^{1/4}),$ we have 
$T^*_n(a,a,\dots,a) 
\sim T_n(a,a,\dots,a) $.

Putting everything together, we finally obtain that, with $a,b=O(n^{1/4})$
$$
Cat(n) \mathbb{E} \big[(\Delta^n_{a+1}-\Delta^n_{b+1})^{2r}\big]
=   (2r-1)!! 2^r (1+O(\mu)+o(1)) n^{\frac{r}{4}} \mu^r T_n(a,a,\dots,a),
$$
or equivalently
\begin{align}\label{eq:moments3}
\mathbb{E} \left[\left(\frac{\Delta^n_{a+1}-\Delta^n_{b+1}}{\sqrt{n}}\right)^{2r}\right] = 
(2r-1)!!
 \mu^r \mathbb{E} \left[\left(\frac{2 M^n_a}{n^{3/4}}\right)^r\right]  (1+O(\mu)+o(1)) .
\end{align}

Taking the limit $n\rightarrow \infty$ on both sides with $a=\alpha n^{1/4}$ and $b=(\alpha+\mu)n^{1/4}$, we obtain,
\begin{align*}
\mathbb{E}\left[ 
\left(f'_{ISE}(\alpha+\mu)-f'_{ISE}(\alpha)\right)^{2r}\right]
&= 
(2r-1)!! 
\mu^r
\mathbb{E}\left[ 
\left(2 f_{ISE}(\alpha) \right)^r
\right] (1+O(\mu)) \\
&= \mu^r
\mathbb{E}\left[ 
\left(2f_{ISE}(\alpha)\right)^r \mathcal{N}^{2r}
\right] (1+O(\mu)) ,
\end{align*}
where $\mathcal{N}$ is a standard Gaussian.
{To deduce the first equality,  we have used the convergence in law of Theorem~\ref{thm:tightness}, together with the fact that, by Lemma~\ref{lemma:momentBound}, the moments appearing on both sides of~\eqref{eq:moments3} are bounded independently of $n$ for any $r>0$ -- these two facts imply the convergence of moments.}
This further implies 
\begin{equation}\label{eq:momentsConv}
\mathbb{E}\left[ 
\left(\frac{f'_{ISE}(\alpha+\mu)-f'_{ISE}(\alpha)}{\sqrt{\mu}}\right)^{p}\right]
\stackrel{\mu\rightarrow 0}{\longrightarrow}
\mathbb{E}\left[ 
\left(2f_{ISE}(\alpha) \right)^{\frac{p}{2}} \mathcal{N}^{p}.
\right].
\end{equation}

{Now consider the case of $p$ odd. By Lemma~\ref{lemma:momentBound}, we directly have
\begin{align*}
\left|
\mathbb{E} \left[\left(\frac{\Delta^n_{a+1}-\Delta^n_{b+1}}{\sqrt{n}}\right)^{p}\right]
\right|
\leq C_p \mu^{\frac{p}{2}+\frac{1}{2}}.
\end{align*}
Using the convergence of moments justified above, we deduce by taking the limit $n\rightarrow \infty$,
\begin{equation}
\left|
\mathbb{E}\left[ 
\left(\frac{f'_{ISE}(\alpha+\mu)-f'_{ISE}(\alpha)}{\sqrt{\mu}}\right)^{p}\right]
\right|
\leq C_p \sqrt{\mu},
\end{equation}
and we deduce by taking the limit $\mu\rightarrow 0$ that~\eqref{eq:momentsConv} also holds for odd $p$ (in that case, the right-hand-side is null, since the Gaussian variable has null odd moments).
}
This implies  the convergence in distribution~\eqref{eq:diffMoment} (note that moments of $f_{ISE}(\alpha)$ do not grow too fast, see e.g.~\cite{BM}).

Now take $\alpha=0$, since $f_{ISE}(0)>0$ almost surely, this implies that $f'_{ISE}$ is, almost surely, not differentiable at $0$. By rerooting invariance, this implies that $f'_{ISE}$ is, almost surely, differentiable almost nowhere inside of its support.
The statement about non-Hölderianity is similar.
\end{proof}

\begin{remark}
It is possible, at the cost of heavier notation but with the same tools and without new significant  difficulty, to enrich the counting techniques developed throughout Section~\ref{sec:proofTight} to estimate joint moments of the form
\begin{align}\label{eq:jointJoint}
\mathbb{E}\left[
\prod_{i=1}^A (\Delta^n_{a_i}-\Delta^n_{b_i})^{r_i}
(\Delta^n_{a_i})^{s_i}
(M^n_{a_i})^{t_i},
\right]
\end{align}
{for integers numbers $a_i,b_i$ and $r_i,s_i,t_i\geq0$}.
To do this, one only has to consider more general schemes in which the marked vertices can be of three types (corresponding to the three types of factors in~\eqref{eq:jointJoint}). The generating function $W_e$ corresponding to each type of edge can be computed as before. At the asymptotic level, the same phenomenon will appear and the dominating contributions are the one in which for each $i\in [A]$, the $r_i$ vertices of the first type share their attachment points in pairs. When furthermore $a_i-b_i=\mu_i n^{1/4}$ with $\mu_i$ small, each attachment vertex plays the same role as a vertex of label $a_i$ up to easily identified factors. In this way, one can prove, {for even $r_i$}:
\begin{align*}
\mathbb{E}\left[
\prod_{i=1}^A \left(\frac{\Delta^n_{a_i}-\Delta^n_{b_i}}{\sqrt{{\mu_i} n}}\right)^{r_i}
\left(\frac{\Delta^n_{a_i}}{n^{3/8}}\right)^{s_i}
\left(\frac{M^n_{a_i}}{n^{3/4}}\right)^{t_i}
\right]&=
\mathbb{E}\left[
\prod_{i=1}^A \left(\sqrt{\frac{2 M^n_{a_i}}{n^{3/4}}} \mathcal{N}_i\right)^{r_i}
\left(\frac{\Delta^n_{a_i}}{n^{3/8}}\right)^{s_i}
\left(\frac{M^n_{a_i}}{n^{3/4}}\right)^{t_i}\right]\\
&\times (1+o(1)+O(\|\mu\|)),
\end{align*}
when $a_i,b_i = O(n^{1/4})$ with $a_i-b_i=\mu_i n^{1/4}$, and where the $\mathcal{N}_i$ are standard independent Gaussian random variables (centred, and having variance 1), independent of everything else. {If one $r_i$ is odd, the LHS is a $O(\sqrt{\mu_i})$}.

It follows, by taking the limit when $n$ goes to infinity, that the convergence of Proposition~\ref{prop:bonus} can be strengthened as follows. For any $\alpha_1,\dots,\alpha_A$ we have the convergence in law of vectors
\ben
\nonumber&&\left[ \l(f_{ISE}'(\alpha_i), f_{ISE}(\alpha_i), \frac{f'_{ISE}(\alpha_i+\mu_i)-f'_{ISE}(\alpha_i)}{\sqrt{\mu_i}  }\r), 1\leq i \leq A \right] ~~~~~~~~~~~~~~~~~~~~\\
&&~~~~~~~~~~~~~~~~~~~~\xrightarrow[\mu_i \to 0^+, \forall i]{(d)}
\Big[ \l(f_{ISE}'(\alpha_i), f_{ISE}(\alpha_i),\sqrt{2f_{ISE}(\alpha_i)}\mathcal{N}_i, 1\leq i \leq A \r)\Big],
\een
where in the right hand side, the $\mathcal{N}_i$ are i.i.d. ${\cal N}(0,1)$ Gaussian random variables, independent from $\l[\l(f_{ISE}'(\alpha_i), f_{ISE}(\alpha_i), 1\leq i \leq A \r) \r]$.
Setting up all the notation for a formal proof would go beyond the intent of this note, and we hope that more direct diffusion approximation techniques might give another approach to such results.
\end{remark}

\color{black}

\section{Proof of the diffusion approximation}
We mainly work with $\xi^{n,\star}=(\delta^{n,\star},m^{n,\star},s^{n,\star})$ and use the stuck version (when $m^{n,\star}=`e$) only when needed.
 We denote by ${\cal B}_k^n$ the $\sigma$-algebra generated by the $\l(\xikn{i}, i \leq k\r)$ (which includes $\l( \Delta_0^\star, M^\star_0,S_0^\star\r)$).
 
 For all $i$, set $d\tkn{i} = n^{-1/4}$ (this is the homogeneous time increment).
 We have
\ben\label{eq:gerdq1}
\E_{~\Bkn}\l(\xikn{k+1}- \xikn{k} \r)   & = & f_n\l(\xikn{k},\tkn{k}\r)\, d \tkn{k} \\ 
\label{eq:gerdq2}\Cov_{~\Bkn}\l(\xikn{k+1}- \xikn{k} \r) & = & \sigmakn{k}\l(\xikn{k},\tkn{k}\r) \,\sigmakn{k}^{T}\l(\xikn{k},\tkn{k}\r)\, d\tkn{k}
\een
where $T$ denotes the transpose, with
\[f_n\l(\bma d\\m\\s\ema,t\r)=\bma 0\\ d \\ m\ema,~~\sigma_n\l(\bma d\\m\\s\ema,t\r)=\sqrt{2|m|}\bma 1\\n^{-1/4}\\n^{-1/2}\ema \]
(they are homogeneous, so that $f_n$ as well as $\sigma_n$ do not depends on $t$).
Formulas \eref{eq:gerdq1} and \eref{eq:gerdq2} come from the following simple facts:
\ben
\bpar{ccl}
\Var_{\Bkn}\l(\Delta_{k+1}^{\star}-\Delta_k^\star \r)&=& 2|M_{k}^\star| \\
\Var_{\Bkn}\l(M_{k+1}^\star-M_k^\star \r)&=& \Var_{\Bkn}\l( \Delta_{k+1}^\star \r)=\Var_{\Bkn}\l( \Delta_{k+1}^\star-\Delta_{k}^\star \r)=2|M_{k}^\star|\\
\Var_{\Bkn}\l(S_{k+1}^\star-S_k^\star \r)&=&\Var_{\Bkn}\l( M_{k+1}^\star\r)=\Var_{\Bkn}\l(M_{k+1}^\star-M_{k}^\star \r)=2|M_k^\star|\\
\Cov_{\Bkn}\l(M_{k+1}^\star-M_k^\star,\Delta_{k+1}^\star-\Delta_k^\star\r)&=&\Cov_{\Bkn}\l(\Delta_{k+1}^\star,\Delta_{k+1}^\star\r)=2|M_k^\star| \\
\Cov_{\Bkn}\l(S_{k+1}^\star-S_k^\star,\Delta_{k+1}^\star-\Delta_k^\star\r)&=&\Cov_{\Bkn}\l(M_{k+1}^\star,\Delta_{k+1}^\star-\Delta_k^\star\r)=2|M_k^\star| \\
\Cov_{\Bkn}\l(S_{k+1}^\star-S_k^\star,M_{k+1}^\star-M_k^\star\r)&=& \Cov_{\Bkn}\l(M_{k+1}^\star,M_{k+1}^\star-M_k^\star\r)=2|M_k^\star|;\epar
\een
indeed, the variance of the geometric random variables involved is 2; we use also that if $Y$ is ${\cal F}$-measurable,$\Cov_{\cal F}(X,Z)=\Cov_{\cal F}(X-Y,Z)=\Cov_{\cal F}(X-Y,Z-Y)$.
The (non rescaled) covariance matrix of $Z^\star_{k+1}-Z^{\star}_k$ is then
\[2|M_k^\star| \bma 1 & 1 & 1  \\1 & 1 & 1  \\1 & 1 & 1  \ema= 2|M_k^\star| \bma 1 \\1 \\1 \ema \bma 1 & 1 & 1 \ema,\]
so that if one writes $\xikn{k}(i)$ for the $i$th entry of $\xikn{k}$, the covariance matrix is
\be
A_{\Bkn}&=&2|M_k^\star| \bma 1/n^{(i+j +2 )/4}\ema_{1\leq i,j \leq 3}
= \frac{2|M_k^\star|}{n^{3/4}} \bma 1/n^{(i+j -2 )/4}\ema_{1\leq i,j \leq 3}dt_k^n. 
\ee

We can now prove the diffusion approximation.

\begin{proof}[Proof of Theorem~\ref{thm:diffApprox}]
We rely on the main theorem in Kushner \cite{Kushner}, and we keep the notation of this reference.
In \cite{Kushner}, there are 7 conditions to check (1) and (A1) to (A6).

 Let us  
introduce the good set
\[GS(\varepsilon,C)= [-C,C] \times [`e/2  , C ]\times [0,C]\]
for some $C>m_0>`e$.
The drift and coefficient functions $f$ and $\sigma$ are bounded and Lipschitz on $GS(`e,C)$, and this will give a sufficient condition for the existence and uniqueness of a solution of the SDE \eref{eq:qfge} (see Condition (A6) below).

It suffices to check that $\xi^{n,\star}$  and $\xi^{\star}$ satisfies the 7 Kushner constraints (on $GS(\varepsilon,C)$, and that it does so, for all $C>0$):

\smallskip

\noindent \textbf{Condition (1)}
First, we have $\sum_{k=0}^{N_n-1}|f_n(\xikn{k},\tkn{k})-f(\xikn{k},\tkn{k})|^2  d\tkn{k}=0 \to 0,$ when $\xi^{(n),\star}$ stays in $GS(\varepsilon,C)$.
Second, 
\[\sigma_n(\xikn{k},\tkn{k})-\sigma(\xikn{k},\tkn{k})=2\sqrt{|\xikn{k}(2)|}\bma 0 \\n^{-1/4}\\n^{-1/2}\ema.\]
We then need to prove that $\E(\sum_{k=0}^{N_n-1}\|\sigma_n(\xikn{k},\tkn{k})-\sigma(\xikn{k},\tkn{k})\|^2 ) d\tkn{k} \to 0$ which is equivalent to 
\[\l(\sum_{k=0}^{N_n-1}  \E(2|M_k^\star|n^{-3/4}) n^{-1/4}\r) n^{-1/4}\to 0. \]
Now, on  $GS(\varepsilon,C)$, $|M_k^\star|\leq Cn^{3/4}$, so the result follows immediately.

\noindent \textbf{Condition (A1)} $\max_{0\leq k \leq N_n-1} d\tkn{k}=n^{-1/4}\to 0$ when $n\to +\infty$.

\noindent \textbf{Condition (A2)} $f$ and $\sigma$ are indeed continuous and bounded on $GS(\varepsilon,C)$, and $f_n$ and $\sigma_n$ are uniformly bounded  on $GS(\varepsilon,C)$.

\noindent \textbf{Condition (A3)} This condition concerns the convergence of the initial distribution of $\xi^{n,\star}(0)$ (which is one of the hypotheses).

\noindent \textbf{Condition (A4)} Set 
\[B_n:=\E\l(\sum_{k=0}^{N_n-1}\|\xikn{k+1}-\xikn{k}-f_n(\xikn{k},\tkn{k})d \tkn{k}\|^{2+\alpha}\r).\]
In fact, only the first entry of the vector $\xikn{k+1}-\xikn{k}-f_n(\xikn{k},\tkn{k})d\tkn{k}$ is not zero:
One has, since $|M_k^\star|\leq C n^{3/4}$,
\be
B_n&=& \sum_{k=0}^{N_n-1}\E\l(|W_{|M_k^\star|}n^{-1/2}|^{1+\alpha/2}\r)\\
&\leq& n^{1/4} \max_{ 0\leq m \leq Cn^{3/4}} \E\l(|W_{|m|}n^{-1/2}|^{1+\alpha/2}\r)\\
& =& n^{1/4} \max_{ 0\leq m \leq Cn^{3/4}} \E\l(|W_{|m|}|^{1+\alpha/2}\r)n^{-1/2-\alpha/4}.\ee
We need to prove that for a well chosen $\alpha>0$, $B_n\to 0$.
Now, since the increments of $W$ have all finite moments, by Marcinkiewicz-Zygmund inequality
  \[\E\l(|W_{|n|}|^{1+\alpha/2}\r)\leq  B(2\, C\,n^{3/4})^{(1/2+\alpha/4)}\]
  where the $2$ comes from the variance of $g^{(k)} -1$, and $B$ is a positive function of $\alpha$.
It suffices then to take $\alpha$ such that
\[1/4 +(3/4)(1/2+\alpha/4)   -1/2-\alpha/4 =1/8-\alpha/16<0\]
and any $\alpha>2$ does the job.

\noindent \textbf{Condition (A5)} $d\tkn{k+1}/ d\tkn{k}=1$. Clear

\noindent \textbf{Condition (A6)}  Here, $f(.,.)$ is Lipschitz as well as $\sigma$ on $GS(\varepsilon,C)$ except at its boundary.
These conditions are sufficient to entail the existence and uniqueness of the solution of the SDE \eref{eq:qfge}, started at $z_0$ (see \O ksendal \cite[Theorem 5.2.1]{OKS}), stuck when $m^{\star}=`e$.
Notice that the value $`e/2$ in the second component in $GS(`e,C)$ is taken smaller than $`e$, which is the point at which we stuck $m_n^{\star}$ and $m^\star$. The value $`e$ being at the interior of $[`e/2,C]$, one sees that the domain on which one can extend the existence of the solution of the SDE is sufficient to entail the convergence of the stuck version of $\zeta^{n,\star}$ to the stuck version of $\zeta^\star$.

\end{proof}

\bibliographystyle{abbrv}

\end{document}